\theoremstyle{definition} \newtheorem{definition}{Definition}[section]
\theoremstyle{definition} \newtheorem{remark}[definition]{Remark}
\theoremstyle{plain} \newtheorem{lemma}[definition]{Lemma}
\theoremstyle{plain} \newtheorem{proposition}[definition]{Proposition}
\theoremstyle{plain} \newtheorem{theorem}[definition]{Theorem}
\theoremstyle{plain} \newtheorem{corollary}[definition]{Corollary}
\theoremstyle{definition} 
\theoremstyle{plain} 
\theoremstyle{definition}
\DeclareMathOperator{\BV}{BV}
\DeclareMathOperator{\dist}{dist}
\DeclareMathOperator{\supp}{supp}
\DeclareMathOperator{\Lip}{Lip}
\newcommand{\E}{\mathcal{E}}
\newcommand{\R}{\mathbb{R}}
\newcommand{\Q}{\mathbb{Q}}
\newcommand{\N}{\mathbb{N}}
\newcommand{\Z}{\mathbb{Z}}
\newcommand{\TV}{\text{\rm Tot.Var.}}
\newcommand{\e}{\varepsilon}
\newcommand{\loc}{\text{\rm loc}}
\newcommand{\D}{\mathcal{D}}
\newcommand{\Id}{\mathrm{id}}
\newcommand{\M}{\mathcal{M}}
\renewcommand{\L}{\mathscr L}
\newcommand{\1}{\mathbbm 1}
\renewcommand{\S}{\mathbb S}
\renewcommand{\div}{\mathrm{div}}
\renewcommand{\L}{\mathscr L}
\renewcommand{\H}{\mathscr H}
\numberwithin{equation}{section} 
\theoremstyle{plain} \newtheorem*{theorem*}{Theorem}
\theoremstyle{plain} 
\theoremstyle{plain} \newtheorem*{mthm*}{Main Theorem}
\theoremstyle{plain} \newtheorem*{conjecture*}{Conjecture}
\theoremstyle{plain} 
\theoremstyle{plain} \newtheorem*{problem*}{Problem}
\title{Characterization of minimizers of Aviles-Giga functionals in special domains}
\thanks{The author has been supported by the SNF Grant 182565.}
\author[E.~Marconi]{Elio Marconi}
\address{Elio Marconi, EPFL B, Station 8, CH-1015 Lausanne, CH.}
\email{elio.marconi@epfl.ch}
\begin{document}
	\maketitle
	
\begin{abstract}
We consider the singularly perturbed problem $F_\e (u,\Omega):=\int_\Omega \e |\nabla^2u| + \e^{-1}|1-|\nabla u|^2|^2$ on bounded domains $\Omega \subset\R^2$. 
Under appropriate boundary conditions, we prove that if $\Omega$ is an ellipse then the minimizers of $F_\e(\cdot,\Omega)$ converge to the viscosity solution of the eikonal equation $|\nabla u|=1$ as $\e \to 0$.
\end{abstract}

\section{Introduction}
\subsection{The main result}
We consider the family of functionals
\begin{equation}\label{E_functional}
F_\e(u,\Omega):= \int_{\Omega} \left( \e |\nabla^2 u| + \frac{1}{\e}\left|1-|\nabla u|^2\right|^2\right)dx,
\end{equation}
where $\Omega\subset \R^2$ is a $C^2$ bounded open set, $\e>0$ and $u \in W^{2,2}_0(\Omega)$.
These functionals were introduced in \cite{AG_conjecture} and proposed as a model for blistering in \cite{OG_morphology}.
In these cases we are interested in the minimizers $u_\e$ of $F_\e$ in the space
\begin{equation*}
\Lambda(\Omega):= \left\{ u\in W^{2,2}_0(\Omega) : \frac{\partial u}{\partial n}=-1 \mbox{ on }\partial \Omega\right\},
\end{equation*}
where $n$ denotes the outer normal to $\Omega$.
The final goal is the understanding of the behavior of $u_\e$ as $\e \to 0$.
In \cite{OG_morphology} (and more explicitly in \cite{AG_viscosity}) it is conjectured that
\begin{equation}\label{E_conj_lim}
u_\e \to \bar u:= \dist(\cdot, \partial \Omega),
\end{equation}
at least for convex domains $\Omega$. A first partial result in this direction was obtained in \cite{JK_entropies}, where the authors 
proved that if $\Omega$ is an ellipse, then
\begin{equation}\label{E_JK}
\lim_{\e \to 0} \min F_{\e}(\cdot,\Omega) = F_0(\bar u, \Omega),
\end{equation} 
where $F_0$ is the candidate asymptotic functional that we are going to introduce in \eqref{E_def_F0}.

The main result of this paper is the proof of \eqref{E_conj_lim} in the same setting as in \cite{JK_entropies}, namely
\begin{theorem}\label{T_main}
Let $\Omega\subset \R^2$ be an ellipse and, for every $\e>0$, let $u_\e$ be a minimizer of $F_\e(\cdot, \Omega)$. 
Then 
\begin{equation*}
\lim_{\e \to 0}u_{\e} = \dist(\cdot, \partial \Omega) \qquad \mbox{in }W^{1,1}(\Omega).
\end{equation*}
\end{theorem}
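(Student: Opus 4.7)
The strategy combines the standard $\Gamma$-convergence compactness for $F_\e$ with a uniqueness result for minimizers of the candidate limit functional $F_0$ on the ellipse.

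First, the uniform bound $F_\e(u_\e)=\min F_\e\le C$, which follows from \eqref{E_JK}, lets us invoke the Aviles-Giga compactness theory (built through entropy and kinetic methods by DeSimone-Kohn-M\"uller-Otto, Ambrosio-De Lellis-Mantegazza, and subsequent refinements used elsewhere in the paper). Extracting a subsequence $\e_k\to 0$, we obtain a limit $u$ with $u_{\e_k}\to u$ strongly in $W^{1,p}(\Omega)$ for some $p\ge 1$, $|\nabla u|=1$ almost everywhere, and the boundary conditions $u=0$, $\partial u/\partial n=-1$ on $\partial\Omega$ inherited in the trace sense. The $\Gamma$-liminf inequality for $F_\e$ together with \eqref{E_JK} gives $F_0(u)\le \liminf_k F_{\e_k}(u_{\e_k})=F_0(\bar u)$, so $u$ minimizes $F_0$ among admissible competitors.

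The heart of the argument is the \emph{uniqueness of $F_0$-minimizers} on the ellipse. Any admissible $u$ with finite $F_0$-energy admits a Lagrangian description: the divergence-free unit field $m:=\nabla^\perp u$ is transported along straight characteristics that start perpendicularly from $\partial\Omega$ (enforced by $\partial u/\partial n=-1$) and terminate on a jump set $\Sigma$, along which $F_0$ concentrates as a cubic cost of the jump of $\nabla u$. The plan is: (i) show that the cubic penalty forces nearly antiparallel characteristics to be paired across $\Sigma$; (ii) use the special geometry of the ellipse---namely, that the locus of points equidistant from two distinct boundary points is a single segment, the focal segment of $\bar u$---to constrain $\Sigma$ to lie inside this focal segment; (iii) conclude by a Jensen-type argument based on the strict convexity of $t\mapsto t^3$ that the characteristic pairing is rigid, so $u=\bar u$. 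Once this is done, the usual subsequence principle upgrades subsequential convergence to the $W^{1,1}(\Omega)$-convergence of the whole family $u_\e\to\bar u$.

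The main obstacle is step (iii). The ellipse hypothesis is essential: its ridge is a single straight segment along which the characteristic pairing can be matched rigidly; for a general convex domain the ridge of $\bar u$ is a one-dimensional tree, and competing minimizers whose defect sets have different topology cannot be ruled out by cubic-cost convexity alone, which explains the restriction of the theorem to ellipses.
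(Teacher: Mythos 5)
Your top-level architecture (energy bound from \eqref{E_JK}, compactness, a lower-bound inequality, uniqueness of the limiting minimizer, then the subsequence principle) does match the skeleton of the paper's argument, but the two steps that carry all the weight are asserted rather than proved, and as stated they are not available. First, you claim that the limit $u$ inherits the boundary condition $\partial u/\partial n=-1$ in the trace sense. This is exactly what equi-bounded energy does \emph{not} guarantee: boundary layers can form, and the paper deals with this by extending the problem to $\Omega_\delta\supset\supset\Omega$, prescribing $u=-\dist(\cdot,\partial\Omega)$ on $S_\delta$, and proving uniqueness for the extended functional $\tilde F_0(\cdot,\Omega_\delta)$ on $\Lambda^0_\delta(\Omega)$; without such a device your limit $u$ need not be admissible for the asymptotic problem you compare it with. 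Second, your lower bound $F_0(u)\le\liminf_k F_{\e_k}(u_{\e_k})$ invokes a $\Gamma$-liminf inequality for $F_0$ that is not known: the available lower bound (Theorem \ref{T_compactness}) is in terms of $\tilde F_0$, and the identity $F_0=\tilde F_0$ on the limit class would require the concentration property (3'), which is open precisely because the limit field $m=\nabla^\perp u$ need not be in $\BV_{\loc}$ and its entropy dissipation need not be carried by the jump set. The paper sidesteps this by working throughout with $\tilde F_0$ and using \eqref{E_JK} only to match the value $\tilde F_0(\bar u^\delta,\Omega_\delta)$.

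The same issue undermines your uniqueness scheme (i)--(iii): the picture of straight characteristics terminating on a jump set $\Sigma$ on which the energy concentrates, followed by a pairing/Jensen argument, presupposes the very $\BV$-type structure and concentration that are unavailable for general finite-energy limits. The paper's uniqueness proof is of a different nature: from the Jin--Kohn algebraic identity \eqref{E_ADM_algebraic} every minimizer satisfies $\div\,\Sigma_{\e_1,\e_2}(m)=0$ and $\div\,\Sigma_{e_1,e_2}(m)\ge 0$; then the kinetic formulation, the minimal kinetic measure, the Lagrangian representation and the factorization of its disintegration (Proposition \ref{P_factorization}, Corollary \ref{C_disintegration}) yield the concentration property (3'') for this special class (Proposition \ref{P_vanishing}), a sign condition on $\partial_s\sigma_{\min}$, concentration of $\nu_{\min}$ on the axes of the ellipse via explicit test functions adapted to the geometry, and finally a directional-invariance argument forcing $m=\nabla^\perp\dist(\cdot,\partial\Omega)$. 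None of these steps is a routine consequence of convexity of $t\mapsto t^3$ or of the focal-segment geometry you appeal to, so as it stands your proposal has a genuine gap at its core.
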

This result is obtained as a corollary after showing that $\bar u$ is the unique minimizer of a suitable asymptotic problem for 
$F_\e(\cdot, \Omega)$ as $\e \to 0$. In order to rigorously introduce it, we recall some previous results (see also the introduction of \cite{CDL_limsup} for a presentation of the history of the problem).

\subsection{Previous results}
In the following $\Omega$ denotes a $C^2$ bounded open subset of $\R^2$.
Independently from the validity of \ref{E_conj_lim}, 
it is conjectured already in \cite{AG_conjecture} that 
\begin{enumerate}
\item if $u_\e$ is such that $\limsup_{\e \to 0}F_\e(u_\e,\Omega)< \infty$, then $u_\e$ converges up to subsequences to a Lipschitz solution $u$ of the eikonal equation $|\nabla u|=1$;
\item if $u_\e$ is a sequence of minimizers of $F_\e(\cdot, \Omega)$, then any limit $u$ of $u_\e$ minimizes the functional
\begin{equation}\label{E_def_F0}
F_0(v,\Omega):= \frac{1}{3}\int_{J_{\nabla v}}|\nabla v ^+ - \nabla v^-|^3 d\mathcal H^1,
\end{equation}
among the solutions of the eikonal equation. Here, $J_{\nabla v}$ denotes the jump set of $\nabla v$ and $\nabla v^\pm$ the corresponding
traces.
\end{enumerate}

 A positive answer to the first point was obtained independently in \cite{DMKO_compactness} and \cite{ADLM_eikonal}. 
A fundamental notion in this analysis and in particular in \cite{DMKO_compactness} is the one of entropy, borrowed from the field of conservation laws. 
\begin{definition}
We say that $\Phi \in C^\infty_c(\R^2;\R^2)$ is an \emph{entropy} if for every open set $\Omega\subset \R^2$ and every smooth
$m:\Omega \to \R^2$ it holds
\begin{equation}\label{E_def_entropy}
\left( \div \,m=0  \mbox{ and } |m|^2=1 \right) \quad \Rightarrow \quad \div(\Phi(m))=0.
\end{equation}
We will denote by $\mathcal E$ the set of entropies.
\end{definition}
We will consider the following family of entropies introduced first in \cite{JK_entropies,AG_semicontinuity}:
\begin{equation*}
\Sigma_{\alpha_1,\alpha_2}(z):= \frac{4}{3}\left((z\cdot \alpha_2)^3 \alpha_1 + (z\cdot \alpha_1)^3\alpha_2\right),
\end{equation*}
where $(\alpha_1,\alpha_2)$ is an orthonormal system in $\R^2$. 

 Collecting the results of \cite{DMKO_compactness} and \cite{ADLM_eikonal} we get the following statement.
\begin{theorem}\label{T_compactness}
Let $\e_k \to 0$ and $u_k \in W^{2,2}_0(\Omega)$ be such that $\limsup_{k\to \infty} F_{\e_k}(u_k, \Omega)< \infty$. 
Then $m_k:=\nabla^\perp u_k$ is pre-compact in $L^1(\Omega)$. 
Moreover if $m_k$ converges to $m$ in $L^1(\Omega)$, then $|m|=1$ a.e. in $\Omega$,  for every entropy $\Phi \in \mathcal E$ it holds
\begin{equation*}\label{E_diss}
\mu_\Phi := \div\, \Phi ( m) \in \mathcal M (\Omega),
\end{equation*}
where $\mathcal M(\Omega)$ denotes the set of finite Radon measures on $\Omega$, and
\begin{equation*}
\left(\bigvee_{(\alpha_1,\alpha_2)} |\div\, \Sigma_{\alpha_1,\alpha_2}(m)|\right)(\Omega)\le \liminf_{k\to \infty} F_{\e_k}(u_k,\Omega),
\end{equation*}
where $\bigvee$ denotes the supremum operator on non-negative measures (see for example \cite[Def. 1.68]{AFP_book}).
\end{theorem}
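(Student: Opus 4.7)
The statement packages together the main results of two previous papers, and I would organize the proof around three ingredients: a pointwise entropy estimate, a compensated-compactness argument yielding $L^1$ pre-compactness, and lower semicontinuity of total variation along the limit.

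First I would establish the pointwise inequality
\begin{equation*}
|\div \Sigma_{\alpha_1,\alpha_2}(\nabla^\perp u_k)| \le \e_k|\nabla^2 u_k| + \e_k^{-1}(1-|\nabla u_k|^2)^2
\end{equation*}
for every orthonormal pair $(\alpha_1,\alpha_2)$. This follows from the chain-rule expansion $\div \Sigma_{\alpha_1,\alpha_2}(m_k) = D\Sigma_{\alpha_1,\alpha_2}(m_k):\nabla m_k$: the entropy condition \eqref{E_def_entropy} ensures that on $\{|m_k|=1\}$ the contribution of the trace-free part of $\nabla m_k$ vanishes, so only a remainder proportional to $(1-|m_k|^2)$ survives, which is controlled by the integrand of $F_{\e_k}$ via a Cauchy--Schwarz--Young argument. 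Integrating gives $\|\div \Sigma_{\alpha_1,\alpha_2}(m_k)\|_{\M(\Omega)} \le F_{\e_k}(u_k,\Omega)$, and for a general entropy $\Phi$ the same computation produces a bound $C_\Phi F_{\e_k}(u_k,\Omega)$.

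Next I would derive strong $L^1$ compactness of $m_k$ by compensated compactness. The penalty term forces $|m_k|^2\to 1$ in $L^2$, so $m_k$ is bounded in $L^4$ and any Young measure it generates is supported on $\S^1$. Combined with $\div m_k=0$ and with $\div\Phi(m_k)$ pre-compact in $W^{-1,q}$ for $q<2$ (apply Rellich to the $\M$-bounded sequence from the previous step), the Murat--Tartar div-curl lemma used with sufficiently many entropies forces the Young measure to be a Dirac mass at a.e.~$x\in \Omega$. This is the content of \cite{DMKO_compactness} and \cite{ADLM_eikonal}: the first uses a kinetic formulation on an auxiliary scalar indicator, the second a direct exploitation of the algebraic structure of the $\Sigma$-entropies. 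Once $m_k\to m$ strongly in $L^1$, the convergence $|m_k|\to 1$ in $L^2$ immediately yields $|m|=1$ a.e.

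Finally, from the strong $L^1$ convergence each $\div \Sigma_{\alpha_1,\alpha_2}(m_k)$ converges weakly-$*$ to $\div \Sigma_{\alpha_1,\alpha_2}(m)$, so lower semicontinuity of the total variation gives $|\div \Sigma_{\alpha_1,\alpha_2}(m)|(\Omega)\le \liminf_k F_{\e_k}(u_k,\Omega)$. To upgrade this to the supremum $\bigvee$, I would localize the first step to arbitrary open sets $A\subset \Omega$: for every finite open partition $\{A_j\}$ and every choice $(\alpha_1^j,\alpha_2^j)$,
\begin{equation*}
\sum_j |\div \Sigma_{\alpha_1^j,\alpha_2^j}(m)|(A_j) \le \sum_j \liminf_k F_{\e_k}(u_k,A_j)\le \liminf_k F_{\e_k}(u_k,\Omega),
\end{equation*}
and passing to the supremum over partitions recovers the definition of $\bigvee$ recalled in \cite[Def.~1.68]{AFP_book}. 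The principal obstacle is the strong $L^1$ compactness in the middle step: the entropy bounds and weak compactness are soft, but ruling out oscillations in the Young measure demands the fine compensated-compactness analysis of \cite{DMKO_compactness} and \cite{ADLM_eikonal}, which is where the entire difficulty of the theorem lies.
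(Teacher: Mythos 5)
First, a point of comparison: the paper does not prove Theorem \ref{T_compactness} at all --- it is imported verbatim from \cite{DMKO_compactness} and \cite{ADLM_eikonal} --- so your outline has to be measured against the arguments of those references. Your overall architecture (entropy-production control by the energy, compensated compactness/Young-measure rigidity for the strong $L^1$ convergence, then lower semicontinuity plus localization over disjoint open sets to recover the $\bigvee$) is indeed their strategy, and you correctly identify the rigidity step as the real difficulty and defer it to them, just as the paper does.

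However, your first step contains a genuine error. The pointwise inequality $|\div \Sigma_{\alpha_1,\alpha_2}(\nabla^\perp u_k)| \le \e_k|\nabla^2 u_k| + \e_k^{-1}(1-|\nabla u_k|^2)^2$ is false, and so is the per-$k$ mass bound $\|\div \Sigma_{\alpha_1,\alpha_2}(m_k)\|_{\M(\Omega)}\le F_{\e_k}(u_k,\Omega)$ you deduce from it. The entropy condition \eqref{E_def_entropy} only forces $\div\Phi(m)$ to vanish for fields that are divergence-free and of unit length \emph{as fields}; for a divergence-free $m$ the chain rule gives $\div\Phi(m)=\Psi(m)\cdot\nabla\bigl(1-|m|^2\bigr)$ for a suitable bounded $\Psi$, i.e.\ the remainder is proportional to the \emph{gradient} of the constraint, not to $(1-|m|^2)$ itself. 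Concretely, writing $\div\Sigma_{e_1,e_2}(\nabla^\perp u)=4\bigl[(\partial_1 u)^2\partial_{11}u-(\partial_2 u)^2\partial_{22}u\bigr]$, at a point where $|\nabla u|=1$ and $\partial_{11}u\ne 0$ the left-hand side is of order one while your right-hand side is of order $\e_k$; a small smooth perturbation of $u(x)=x_1$ likewise makes the integrated bound fail for fixed $\e_k$. The correct mechanism (Jin--Kohn \cite{JK_entropies}, \cite{DMKO_compactness}, \cite{ADLM_eikonal}) is to integrate by parts: $\div\Phi(m_k)=\div\bigl[(1-|m_k|^2)\Psi(m_k)\bigr]-(1-|m_k|^2)\,D\Psi(m_k):\nabla m_k$, where only the second term is bounded in $\M(\Omega)$ by $C_\Phi F_{\e_k}(u_k,\Omega)$ (via Young), while the first is a divergence of a field that is $O(\sqrt{\e_k})$ in $L^2$ and hence disappears only in the limit $k\to\infty$. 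This repairable but essential correction propagates through the rest of your argument: the ``$\M$-bounded sequence'' fed into Rellich/Murat--Tartar must be replaced by this two-term decomposition (vanishing part in $H^{-1}$ plus measure-bounded part), and the final liminf inequality is obtained by testing with $\zeta\in C_c^1$, letting $k\to\infty$ first and then $\zeta\uparrow \1_{A_j}$, rather than by weak-$*$ convergence of uniformly bounded measures. Finally, the constant $1$ (rather than $C_\Phi$) for the specific entropies $\Sigma_{\alpha_1,\alpha_2}$ is not a generic Cauchy--Schwarz--Young consequence but the sharp algebraic computation of \cite{JK_entropies,ADLM_eikonal}, which your sketch asserts without verification.
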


Theorem \ref{T_compactness} motivates
the introduction of the following space of vector fields, which contains all the limits of sequences $\nabla^\perp u_{\e_k}$, where $u_{\e_k}$  have equi-bounded energy.
\begin{definition}
We denote by $A(\Omega)$ the set of all $m\in L^\infty(\Omega; \R^2)$ such that 
\begin{equation*}
\div\, m = 0 \quad \mbox{in }\D'(\Omega), \qquad \qquad |m|^2=1 \quad \L^2\mbox{-a.e. in }\Omega
\end{equation*}
and such that for every entropy $\Phi \in \mathcal E$ it holds
\begin{equation*}
\mu_\Phi := \div\left(\Phi(m)\right)  \in \mathcal M_{\loc} (\Omega),
\end{equation*}
namely $\mu_\Phi$ is a locally finite Radon measure on $\Omega$.
We moreover set
\begin{equation*}
\tilde F_0(u,\Omega):= \left(\bigvee_{(\alpha_1,\alpha_2)} \left|\div {\Sigma_{\alpha_1,\alpha_2}}\left(\nabla^\perp u\right)\right|\right) (\Omega).
\end{equation*}
 Finally we denote by 
\begin{equation*}
\Lambda^0(\Omega):=\left\{ u \in W^{1,\infty}_0(\Omega) : \nabla^\perp u \in A(\Omega)\right\}.
\end{equation*}
\end{definition}
The functional $\tilde F_0(\cdot,\Omega)$ coincides with $F_0(\cdot, \Omega)$ in
the subspace of $\Lambda^0(\Omega)$ whose elements have gradient in $\BV_{\loc}(\Omega)$ (see \cite{ADLM_eikonal}) and it is the 
natural candidate to be the $\Gamma$-limit of the functionals $F_\e(\cdot, \Omega)$ as $\e \to 0^+$.

Although $A(\Omega)\not\subset \BV_\loc(\Omega)$, elements of $A(\Omega)$ share with $\BV$ functions most of their fine properties:
\begin{theorem}[\cite{DLO_JEMS}]\label{T_structure}
For every $m\in A(\Omega)$ there exists a $\H^1$-rectifiable set $J\subset \Omega$ such that
\begin{enumerate}
\item for $\H^1$-a.e. $x \notin J$ it holds
\begin{equation*}
\lim_{r\to 0}\frac{1}{r^2}\int_{B_r(x)}|m(y)-\bar m_{x,r}| dy =0,
\end{equation*}
where $\bar m_{x,r}$ denotes the average of $m$ on $B_r(x)$, namely $x$ is a vanishing mean oscillation point of $m$;
\item for $\H^1$-a.e. $x \in J$ there exist $m^+(x),m^-(x) \in \S^1$ such that
\begin{equation*}
\lim_{r \to 0} \frac{1}{r^2}\left( \int_{B_r^+(x)} |m(y)-m^+(x)|dy + \int_{B_r^-(x)}|m(y)-m^-(y)|dy  \right) =0,
\end{equation*}
where $B^\pm(x):= \{ y \in B_r(x): \pm y \cdot \mathbf{n}(x)>0\}$ and $\mathbf{n}(x)$ is a unit vector normal to $J$ in $x$;
\item for every $\Phi \in \mathcal E$ it holds
\begin{equation*}
\begin{split}
\mu_\Phi \llcorner J & = [ \mathbf{n}\cdot (\Phi(m^+)-\Phi(m^-))] \H^1\llcorner J,\\ 
\mu_\Phi \llcorner K & = 0 \quad \forall K\subset \Omega \setminus J \mbox{ with }\H^1(K)< \infty.
\end{split}
\end{equation*}
\end{enumerate} 
\end{theorem}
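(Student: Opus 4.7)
The plan is to construct the rectifiable set $J$ directly from the entropy productions and then derive the three claims from a blow-up rigidity analysis. Concretely, consider the total entropy production measure
\[
\mu := \bigvee_{(\alpha_1,\alpha_2)}\bigl|\mu_{\Sigma_{\alpha_1,\alpha_2}}\bigr|\in \M_{\loc}(\Omega),
\]
which is a well-defined locally finite Radon measure since $m \in A(\Omega)$. At each $x_0 \in \Omega$ I would rescale $m_r(y):= m(x_0+ry)$, and exploit that these rescalings remain in $A$ (with controlled entropy production on a fixed ball) to extract tangent vector fields $m_0\in A(\R^2)$ as $r \to 0$. One then defines $J$ (up to $\H^1$-null modifications) as the set of $x_0$ at which the tangent $m_0$ is non-constant.

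The main obstacle is the \emph{rigidity classification} of these tangent profiles: at $\H^1$-a.e. $x_0$, every tangent $m_0$ must be either constant or a planar jump
\[
m_0(y) = m^+ \1_{\{y\cdot \mathbf n>0\}} + m^-\1_{\{y\cdot \mathbf n<0\}},
\]
with $|m^\pm|=1$ and $\mathbf n\cdot(m^+-m^-)=0$ (forced by $\div m_0=0$). The cubic kernels $\Sigma_{\alpha_1,\alpha_2}$ play an essential role here: for any candidate pair $(m^+,m^-) \in \S^1\times \S^1$, one adapts $(\alpha_1,\alpha_2)$ so that the sign of $\mathbf n\cdot(\Sigma_{\alpha_1,\alpha_2}(m^+)-\Sigma_{\alpha_1,\alpha_2}(m^-))$ becomes a convexity obstruction ruling out any tangent profile more complicated than a single planar jump (vortices, $T_4$-type configurations, or spirals). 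This is the technical heart of \cite{DLO_JEMS}.

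Once the rigidity is in hand, the three listed properties are essentially bookkeeping. Property (1) at $x_0 \notin J$ says that $m$ has vanishing mean oscillation there, which is tautological from having a constant tangent. Property (2) at $x_0 \in J$ is the rephrasing that the one-sided traces exist in the strong $L^1$ sense, which is exactly the statement that the unique tangent is a planar jump. The $\H^1$-rectifiability of $J$ follows by combining the uniqueness and planarity of the non-constant tangent at each $x_0 \in J$ with a Preiss-type criterion for rectifiability, or equivalently with a direct Lipschitz-graph construction whose slopes are controlled by $\mathbf n(x_0)$. For property (3), I would apply the divergence theorem on thin cylinders straddling $J$ at a regular point: the traces of $\Phi(m)$ on the two sides coincide with $\Phi(m^\pm)$ by continuity of $\Phi$, yielding the claimed Rankine-Hugoniot identity. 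The vanishing of $\mu_\Phi$ on any $\H^1$-finite $K\subset \Omega\setminus J$ follows because at $\H^1$-a.e. such point every blow-up is constant, so $\mu_\Phi$ cannot have positive upper $\H^1$-density there.
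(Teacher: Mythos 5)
There is nothing in the paper to compare you against: Theorem \ref{T_structure} is stated as a quotation from \cite{DLO_JEMS} and no proof is given here, so the only meaningful benchmark is the De Lellis--Otto argument itself (and its micromagnetics analogue \cite{AKLR_rectifiability}). Your outline does reproduce the broad skeleton of that proof -- entropy production measure, blow-up, classification of tangent profiles as constants or single planar jumps, then bookkeeping -- but as written it is a roadmap rather than a proof, and two of its steps would fail in the form you state them.

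First, the rigidity classification, which you yourself call the technical heart, is asserted rather than proved, and the mechanism you propose for it is not viable: choosing $(\alpha_1,\alpha_2)$ so that the sign of $\mathbf{n}\cdot\bigl(\Sigma_{\alpha_1,\alpha_2}(m^+)-\Sigma_{\alpha_1,\alpha_2}(m^-)\bigr)$ acts as a ``convexity obstruction'' cannot rule out vortex-type tangents, since the vortex $m(y)=\pm y^\perp/|y|$ produces \emph{no} entropy for any $\Phi\in\mathcal E$; such profiles are excluded at $\H^1$-a.e.\ point by a geometric/density argument (they can only arise on an $\H^1$-negligible set), not by any sign condition, and the exclusion of oscillating or $T_4$-type configurations in \cite{DLO_JEMS} is a genuinely delicate analysis, not a one-line convexity statement. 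Second, your final step is incorrect as stated: constancy of blow-ups at $\H^1$-a.e.\ $x\in\Omega\setminus J$ gives vanishing upper $1$-density of $\mu_\Phi$ only outside an $\H^1$-null exceptional set $N$, and this does not prevent $\mu_\Phi$ from charging $N$ (an atom of $\mu_\Phi$ at a single point of $\Omega\setminus J$ is compatible with your density information, yet $\{$that point$\}$ is an $\H^1$-finite set). Since the claim $\mu_\Phi\llcorner K=0$ must hold for \emph{all} $K\subset\Omega\setminus J$ with $\H^1(K)<\infty$, including $\H^1$-null ones, one must additionally show that $\mu_\Phi$ does not charge such exceptional sets; this is part of the real content of \cite{DLO_JEMS}. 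Relatedly, ``uniqueness and planarity of the tangent plus a Preiss-type criterion'' does not by itself yield rectifiability of $J$: Preiss-type theorems concern measures with controlled densities, so you would first have to attach to $J$ a measure with positive lower $1$-density (or run the covering-by-Lipschitz-graphs argument directly), which again is where the actual work lies.
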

The analogy with the structure of elements in $A(\Omega)\cap \BV_\loc(\Omega)$ is not complete: 
for these functions properties (1) and (3) can be improved to
\begin{enumerate}
 \item[(1')] $\H^1$-a.e. $x \notin J$ is a Lebesgue point of $m$;
 \item[(3')]  for every $\Phi \in \mathcal E$
 \begin{equation}\label{E_diss_J}
 \mu_\Phi = [ \mathbf{n}\cdot (\Phi(m^+)-\Phi(m^-))] \H^1\llcorner J.
 \end{equation}
 \end{enumerate}
In order to prove (3') from (3) one should show that $\mu_\Phi$ is concentrated on $J$.
This is considered as a fundamental step towards the solution of the $\Gamma$-limit conjecture and it remains open. 
Notice moreover that by means of Theorem \ref{T_structure} we can give a meaning to the definition of the functional $F_0(\cdot,\Omega)$
even for solutions $u$ to the eikonal equation with $\nabla^\perp u\in A(\Omega)\setminus \BV_\loc(\Omega)$; Property (3') would imply that
$F_0$ coincides with $\tilde F_0$ on the whole $\Lambda^0(\Omega)$.

A fundamental tool in the study of fine properties of elements of $A(\Omega)$ is the kinetic formulation \cite{JP_kinetic} 
(see also \cite{LPT_kinetic} in the framework of scalar conservation laws). Here we use a more recent version obtained in \cite{GL_eikonal}.
\begin{theorem}\label{T_kin}
Let $m\in A(\Omega)$. Then there exists $\sigma \in \M_\loc(\Omega \times \R/2\pi\Z)$ such that
\begin{equation}\label{E_kinetic}
e^{is}\cdot \nabla_x \chi = \partial_s \sigma \qquad \mbox{in }\D'(\Omega \times \R/2\pi\Z),
\end{equation}
where $\chi: \Omega \times \R/2\pi\Z$ is defined by
\begin{equation}\label{E_def_chi}
\chi (x,s) =
\begin{cases}
1 &\mbox{if }e^{is}\cdot m(x)>0, \\
0 &\mbox{otherwise}.
\end{cases}
\end{equation}
\end{theorem}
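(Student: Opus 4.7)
The strategy is to build $\sigma$ from the entropy productions $\mu_\Phi$ via duality in a Fourier-like decomposition in the angular variable $s$. For every $\psi \in C^\infty(\R/2\pi\Z)$, define
\begin{equation*}
\Phi_\psi(z) := \int_0^{2\pi} \1_{z\cdot e^{is}>0}\, e^{is}\, \psi(s)\, ds,\qquad z\in \R^2\setminus \{0\}.
\end{equation*}
Parametrizing $m=e^{i\theta}$ locally and using the identity $\div m = m^\perp \cdot \nabla\theta$, a direct computation gives $\div \Phi_\psi(m)=0$ whenever $m$ is smooth with $\div m=0$ and $|m|=1$. Hence, after a smooth truncation away from $z=0$, $\Phi_\psi$ defines an entropy in the sense of \eqref{E_def_entropy}, so $\mu_{\Phi_\psi}:=\div\Phi_\psi(m)\in\M_\loc(\Omega)$ by the definition of $A(\Omega)$.

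For a test function of product form $\phi(x,s)=\psi(s)\eta(x)$ with $\int_0^{2\pi}\psi\,ds=0$, let $\Psi$ be an antiderivative of $\psi$ on $\R/2\pi\Z$. Expanding and integrating by parts in $x$, the pairing of \eqref{E_kinetic} against $\Psi(s)\eta(x)$ reduces to
\begin{equation*}
\bigl\langle e^{is}\cdot\nabla_x \chi,\, \Psi(s)\eta(x)\bigr\rangle = -\int_\Omega \eta\, d\mu_{\Phi_\Psi}.
\end{equation*}
This suggests defining $\sigma$ on such tensor products by the right-hand side, extending by linearity to finite sums of product functions, and finally by density to all $\phi\in C^\infty_c(\Omega\times\R/2\pi\Z)$. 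The zero mode $\bar\phi(x):=\tfrac{1}{2\pi}\int \phi(x,s)\,ds$ is unconstrained by \eqref{E_kinetic} (since $\int \chi e^{is}\,ds = 2m$ is divergence-free), and we set the corresponding component of $\sigma$ to zero. Then \eqref{E_kinetic} is automatic from the construction.

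The main obstacle is proving that the candidate $\sigma$ is a locally finite Radon measure rather than merely a distribution, which requires a bound $|\langle \sigma,\phi\rangle|\le C_K \|\phi\|_\infty$ for $\phi$ supported in $K\times\R/2\pi\Z$. Using Theorem \ref{T_structure}, the restriction of $\mu_{\Phi_\Psi}$ to the jump set $J$ satisfies $|\mu_{\Phi_\Psi}|\rest J \le 2\|\Phi_\Psi\|_\infty \H^1\rest J$, and $\|\Phi_\Psi\|_\infty$ is bounded by a constant times $\|\Psi\|_\infty$; by choosing $\Psi$ of zero mean one further gets $\|\Psi\|_\infty\le C\|\psi\|_\infty$. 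The remaining difficulty, which is the heart of the matter in \cite{GL_eikonal}, is to control the diffuse part of $\mu_{\Phi_\Psi}$ on $\Omega\setminus J$: Theorem \ref{T_structure} only ensures that this part vanishes on $\H^1$-finite sets, so a finer argument is needed to achieve a quantitative bound depending solely on $\|\psi\|_\infty$. Once this uniform estimate is established, a Riesz representation argument produces the desired measure $\sigma\in\M_\loc(\Omega\times\R/2\pi\Z)$.
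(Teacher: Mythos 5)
The paper does not prove Theorem \ref{T_kin} at all: it is quoted from \cite{GL_eikonal}, so the comparison must be with the argument there. Your set-up is the standard dual description of $\sigma$: testing \eqref{E_kinetic} with $\Psi(s)\eta(x)$ identifies the pairing with the entropy production $\mu_{\Phi_\Psi}$ of the generalized entropies $\Phi_\Psi(z)=\int_0^{2\pi}\1_{z\cdot e^{is}>0}\,e^{is}\,\Psi(s)\,ds$, and fixing the zero mode corresponds exactly to the normalization of $\sigma_0$ in \cite{GL_eikonal}. But this only yields a \emph{distribution} solving \eqref{E_kinetic}, which is essentially immediate; the whole content of the theorem is that $\sigma$ can be taken in $\M_\loc(\Omega\times\R/2\pi\Z)$, and that is precisely the step you defer (``once this uniform estimate is established\dots''). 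The definition of $A(\Omega)$ gives, for each fixed $\psi$, that $\mu_{\Phi_\psi}$ is a locally finite measure, but with no control of its local mass in terms of $\|\psi\|_\infty$; and Theorem \ref{T_structure} is purely qualitative off $J$ (it only says $\mu_\Phi$ vanishes on $\H^1$-finite subsets of $\Omega\setminus J$), so it cannot supply the missing bound. What is actually needed is a single locally finite measure $\lambda$ on $\Omega$ with $|\mu_{\Phi_\psi}|\le C\|\psi\|_\infty\,\lambda$ locally, for all $\psi$ simultaneously --- equivalently, that the degenerate ``half-space'' entropy productions $\div_x\bigl(\chi(\cdot,s)e^{is}\bigr)$ are measures dominated, locally uniformly in $s$, by a fixed measure. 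Establishing this domination (by approximating the discontinuous entropies with smooth ones and quantifying the resulting estimates) is the substance of the Ghiraldin--Lamy proof, and without it the Riesz representation step cannot be invoked; so the proposal omits the only nontrivial part of the theorem.

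A secondary gap: even granting $|\langle\sigma,\psi\otimes\eta\rangle|\le C_K\|\psi\|_\infty\|\eta\|_\infty$ on product test functions, deducing $|\langle\sigma,\phi\rangle|\le C_K\|\phi\|_\infty$ for general $\phi\in C_c(K\times\R/2\pi\Z)$ is not a routine density argument, since decomposing $\phi$ into finite sums of tensor products does not preserve the sup norm. The way this is handled in \cite{GL_eikonal} is to work pointwise in $s$: one shows $s\mapsto\div_x\bigl(\chi(\cdot,s)e^{is}\bigr)$ is measure-valued with an $s$-uniform (hence integrable) local mass bound, notes that its mean in $s$ vanishes because $\int_0^{2\pi}\chi(\cdot,s)e^{is}\,ds=2m$ is divergence-free, and defines $\sigma$ as the normalized primitive in $s$ of this family, which is then manifestly a locally finite measure on the product. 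Both issues trace back to the same missing quantitative estimate.
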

We observe that if $\sigma$ solves \eqref{E_kinetic}, then also
\begin{equation*}
\sigma + \mu \otimes \mathcal L^1
\end{equation*}
solves \eqref{E_kinetic} for every $\mu\in \M_\loc(\Omega)$. This ambiguity is resolved in \cite{GL_eikonal} by considering the unique $\sigma_0$ solving
\eqref{E_kinetic} such that 
\begin{equation*}\label{E_normalization_GL}
\int_{\Omega \times \S^1} \varphi(x) d \sigma_0(x,s)=0, \qquad \forall \varphi \in C^\infty_c(\Omega).
\end{equation*}
The above kinetic formulation encodes the entropy production of the family of entropies
\begin{equation*}
\E_\pi:=\left\{ \Phi \in \E: \frac{d}{ds}\Phi(e^{is})|_{s=\bar s} = - \frac{d}{ds}\Phi(e^{is})|_{s=\bar s + \pi} \right\}.
\end{equation*} 
Condition \eqref{E_def_entropy} is equivalent to $\frac{d}{ds}\Phi(e^{is}) \cdot e^{is}=0$ for every $s\in \R/2\pi\Z$, 
therefore for every $\Phi \in \E$ we can define $\psi_\Phi:\R/2\pi\Z \to \R$ such that 
\begin{equation*}
 \frac{d}{ds}\Phi(e^{is}) = 2\psi_\Phi\left(s+ \frac{\pi}{2}\right)e^{i\left(s+\frac{\pi}{2}\right)} \qquad \forall s \in \R/2\pi\Z.
\end{equation*}
Notice that $\Phi \in \E_\pi$ if and only if $\psi_\Phi$ is $\pi$-periodic.
Rephrasing the construction in \cite{GL_eikonal}, we have the following identity:
for every $\Phi \in \E_\pi$ and every $\zeta \in C^1_c (\Omega)$ it holds
\begin{equation}\label{E_kin_entropy}
\langle\div \Phi(m), \zeta \rangle = \langle \partial_s \sigma, \zeta \otimes \psi_\Phi \rangle,
\end{equation}
namely
\begin{equation*}
\int_\Omega \Phi(m) \cdot \nabla \zeta dx = \int_{\Omega \times \R/2\pi\Z}\zeta (x) \psi'_\Phi(s) d\sigma.
\end{equation*}

A possibly weaker version of (3') is the following:
\begin{enumerate}
\item[(3'')] Eq. \eqref{E_diss_J} holds for every $\Phi \in \mathcal E_\pi$.
\end{enumerate}
This is equivalent to require that $\nu_0:= (p_x)_\sharp |\sigma_0| \in \mathcal M_\loc(\Omega)$ is concentrated on $J$ and moreover it would be sufficient to establish the equality $F_0=\tilde F_0$. 
The following proposition is a partial result in this direction for general $m \in A(\Omega)$; we remark here that a key step of the proof of Theorem \ref{T_main} is to establish (3'') for a class of $m$ including the limits of $\nabla^\perp u_\e$, where $u_\e$ is a minimizer of $F_\e(\cdot, \Omega)$ and $\Omega$ is an ellipse.

\begin{proposition}\label{P_factorization}
Let $m \in A(\Omega)$ and $(\sigma_{0,x})_{x\in \Omega}\subset \mathcal M(\R/2\pi\Z)$ be the disintegration of $\sigma_0$ with respect to $\nu_0$ defined for $\nu_0$-a.e. $x \in \Omega$ by the properties $|\sigma_{0,x}|(\R/2\pi\Z)=1$ and 
\begin{equation*}
\int_{\Omega \times \R/2\pi\Z}\varphi (x,s) d \sigma_0(x,s) = \int_{\Omega} \int_{\R/2\pi\Z} \varphi(x,s)d \sigma_{0,x}(s) d \nu_0(x)
\end{equation*}
for every $\varphi \in C^\infty_c(\Omega \times \R/2\pi\Z)$.
Then for $\nu_0$-a.e. $x \in \Omega \setminus J$ there exists $\bar s \in \R/2\pi\Z$ such that 
\begin{equation*}
\sigma_{0,x}= \pm \frac{1}{4}\left( \delta_{\bar s} + \delta_{\bar s + \pi} - \frac{1}{\pi} \mathcal L^1\right).
\end{equation*}
\end{proposition}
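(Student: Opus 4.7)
The plan is to combine three general structural constraints on $\sigma_{0,x_0}$ with a blow-up argument based on the kinetic equation \eqref{E_kinetic}. Three constraints hold for $\nu_0$-a.e.\ $x_0$: (i) $|\sigma_{0,x_0}|(\R/2\pi\Z)=1$ by the choice of disintegration; (ii) $\sigma_{0,x_0}$ has vanishing mass on $\R/2\pi\Z$, because testing the normalization $\int_\Omega \varphi\,d\sigma_0=0$ for $\varphi\in C^\infty_c(\Omega)$ against the disintegration yields $\sigma_{0,x}(\R/2\pi\Z)=0$ for $\nu_0$-a.e.\ $x$; (iii) $\sigma_{0,x_0}$ is invariant under $s\mapsto s+\pi$, since the identity $\chi(x,s+\pi)=1-\chi(x,s)$ together with the normalization of $\sigma_0$ forces the push-forward of $\sigma_0$ under $(x,s)\mapsto(x,s+\pi)$ to coincide with $\sigma_0$. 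Once one knows that $\sigma_{0,x_0}$ consists of two Dirac masses at antipodal angles plus a constant multiple of $\L^1$, the conditions (i), (ii), (iii) force the Diracs to have equal coefficients $\pm 1/4$ and the continuous part to be $\mp\L^1/(4\pi)$, giving exactly the stated form.

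To produce this Dirac-plus-uniform structure I would blow up at $x_0$. Setting $T_r(y):=x_0+ry$ and $\chi^{(r)}(y,s):=\chi(T_r(y),s)$, consider
\begin{equation*}
\sigma_0^{(r)}:=\frac{1}{\nu_0(B_r(x_0))}(T_r^{-1},\Id)_\sharp\bigl(\sigma_0\rest(B_r(x_0)\times\R/2\pi\Z)\bigr),
\end{equation*}
a signed measure of total variation $1$ on $\bar B_1\times\R/2\pi\Z$. A Lebesgue-type differentiation for the disintegration yields, for $\nu_0$-a.e.\ $x_0$ and along a subsequence $r_n\to 0$, $\sigma_0^{(r_n)}\rightharpoonup\tilde\nu_\ast\otimes\sigma_{0,x_0}$ for some probability measure $\tilde\nu_\ast$ on $\bar B_1$. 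Using Theorem~\ref{T_structure}(1) to identify the blow-up limits of $m$ (and hence of $\chi^{(r)}$) at $x_0$, I would pass to the limit in the rescaled kinetic equation
\begin{equation*}
r\,e^{is}\cdot\nabla_y\chi^{(r)}=\nu_0(B_r(x_0))\,\partial_s\sigma_0^{(r)}.
\end{equation*}
Integration by parts in $y$ shows that the contribution of $\nabla_y\chi^{(r)}$ concentrates in the limit at the two antipodal angles $\bar s(x_0),\bar s(x_0)+\pi$ where $\chi(\cdot,s)$ is discontinuous as the direction of $m$ sweeps past $e^{i(s+\pi/2)}$. Comparing with $\tilde\nu_\ast\otimes\partial_s\sigma_{0,x_0}$ forces the $s$-singular part of $\sigma_{0,x_0}$ to be concentrated on $\{\bar s(x_0),\bar s(x_0)+\pi\}$; its absolutely continuous part is then a multiple of $\L^1$ by the $\pi$-periodicity (iii), and (i)--(ii) fix the coefficients.

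The main obstacle is this identification step. The VMO property of Theorem~\ref{T_structure}(1) holds only $\H^1$-a.e.\ and thus need not hold $\nu_0$-a.e., and the rescaling factor $\nu_0(B_r(x_0))$ can behave very irregularly (positive or vanishing $\L^2$-density at $x_0$, or purely singular non-rectifiable behavior), making it delicate to match the limits on the two sides of the rescaled kinetic equation. A workable strategy is to use (i)--(iii) as obstructions: for instance a purely $s$-continuous $\sigma_{0,x_0}$ would be forced by the blow-up to be $s$-constant, and hence would vanish by (ii), contradicting (i); this indirectly forces the singular $s$-part of $\sigma_{0,x_0}$ to be nontrivial and, by the $\pi$-periodicity and the blow-up analysis, to consist of exactly two Dirac masses at antipodal points with the normalization $\pm 1/4$.
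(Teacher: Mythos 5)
Your preparatory observations (i)--(iii) are correct, but they constitute only the easy half of the statement: once one knows that $\partial_s\sigma_{0,x}$ is supported on a \emph{single} antipodal pair $\{\bar s,\bar s+\pi\}$, the normalization $|\sigma_{0,x}|=1$, the vanishing mass and the $\pi$-periodicity do force $\sigma_{0,x}=\pm\tfrac14\bigl(\delta_{\bar s}+\delta_{\bar s+\pi}-\tfrac1\pi\L^1\bigr)$, exactly as in the paper (where the analogous normalization is carried out in Corollary \ref{C_disintegration} via the minimality condition \eqref{E_min}). The genuine content of the proposition is the support statement itself (Proposition \ref{P_partial} in the paper), and your blow-up argument does not deliver it. In the rescaled equation $r\,e^{is}\cdot\nabla_y\chi^{(r)}=\nu_0(B_r(x_0))\,\partial_s\sigma_0^{(r)}$ the two sides degenerate at different rates: at a VMO point the blow-ups $\chi^{(r_n)}$ converge in $L^1$ to a function of the form $\1_{e^{is}\cdot m_*>0}$ which is constant in $y$, so $e^{is}\cdot\nabla_y\chi^{(r_n)}\to 0$ distributionally even before multiplying by $r_n$, while on the right $\partial_s\sigma_0^{(r_n)}\to\tilde\nu_*\otimes\partial_s\sigma_{0,x_0}$ is multiplied by $\nu_0(B_{r_n})\to0$. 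The limiting relation is therefore the tautology $0=0$ and imposes no constraint on $\sigma_{0,x_0}$; in particular your claim that a purely $s$-continuous $\sigma_{0,x_0}$ ``would be forced by the blow-up to be $s$-constant'' has no justification, and nothing in (i)--(iii) excludes a support consisting of several antipodal pairs or a diffuse $\partial_s\sigma_{0,x_0}$. To extract information one must compare \emph{quantitatively} the size of the oscillation of $m$ at scale $r$ around $x_0$ with the ratio $\nu_0(B_r(x_0))/r$; a soft blow-up cannot do this.

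That quantitative input is precisely what the paper's Lagrangian machinery provides, and it is the missing ingredient in your plan. The paper builds a minimal Lagrangian representation of $m$ (Proposition \ref{P_Lagr}), so that every point of $\supp\partial_s\sigma_x$ is seen by a characteristic curve through $x$ with angle near that point, and Lemma \ref{L_duality} --- proved by a transversality argument, intersecting this curve with the family of characteristics whose angle is shifted by order $\delta$ --- yields the dichotomy: either $\L^2\bigl(\{y\in B_r(x):e^{i\bar s}\cdot m(y)>-\delta\}\bigr)\gtrsim\delta r^2$ as $r\to0$, or $\nu_{\min}(B_r(x))\gtrsim\delta^3 r$ along a subsequence. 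If $\supp\partial_s\sigma_x$ contained two distinct antipodal pairs, the first alternative in four suitably separated directions is incompatible with $x$ being a VMO point, and the exceptional sets are handled using $\nu_{\min}\ll\H^1$ together with Theorem \ref{T_structure}; note that this also resolves the VMO-only-$\H^1$-a.e.\ issue you flag as an obstacle. The statement for $\sigma_0$ then follows because $\partial_s\sigma_0=\partial_s\sigma_{\min}$ and $\nu_0$, $\nu_{\min}$ are mutually absolutely continuous. Without an estimate of the type of Lemma \ref{L_duality} (or some substitute linking the support of $\partial_s\sigma_{0,x}$ to the local behavior of $m$ or to the $1$-density of $\nu_0$), your proof has a gap exactly at the central point of the proposition.
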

Among other results, the same expression for $\sigma_{0,x}$ has been obtained very recently in \cite{LP_factorization} under the additional assumption that 
$\div \,\Phi(m) \in L^p(\Omega)$ for every $\Phi \in \mathcal E$. As the authors point out, it is still not known if this additional 
assumption is sufficient to establish that indeed $\sigma_0$ vanishes.

\subsection{The asymptotic problem}

Adapting the argument in \cite{Vasseur_traces} for scalar conservation laws to this context, it is possible to prove that the elements of $A(\Omega)$ with finite energy have 
strong traces in $L^1$ at the boundary of $\Omega$. However, the conditions 
\begin{equation*}
u_\e \in \Lambda (\Omega), \qquad  \limsup_{\e \to 0} F_{\e}(u_\e,\Omega) < \infty, \qquad \mbox{and} \qquad u=\lim_{\e \to 0}u_\e \quad \mbox{in }W^{1,1}
\end{equation*}
do not guarantee that $\frac{\partial u}{\partial n} =-1$ on $\partial \Omega$; in other words we can have boundary layers.
In order to take them into account we slightly reformulate the minimum problem for $F_\e(\cdot,\Omega)$:
given $\delta >0$ we define
\begin{equation*}
\Omega_\delta = \{ x \in \R^2: \dist (x,\Omega) < \delta\}, \qquad \mbox{and} \qquad S_\delta := \Omega_\delta \setminus \bar \Omega. 
\end{equation*}
Being $\Omega$ of class $C^2$, we can take $\delta>0$ sufficiently small so that the function $ - \dist(x, \partial \Omega)$ 
belongs to $W^{2,2}(S_\delta)$.
We therefore consider the minimum problems for the functionals $F_\e(\cdot, \Omega_\delta)$ on the space
\begin{equation*}
\Lambda_\delta(\Omega):= \left\{ u\in W^{2,2}(\Omega_\delta) : u (x)=- \dist(x, \partial \Omega) \mbox{ for a.e. }x \in S_\delta\right\}.
\end{equation*}
Notice that for every $u \in \Lambda(\Omega)$ the function $u^\delta: \Omega_\delta \to \R$ defined by
\begin{equation}\label{E_udelta}
u^\delta(x):=
\begin{cases}
u(x) & \mbox{if }x \in \Omega, \\
- \dist(x, \partial \Omega) & \mbox{if }x \in \Omega_\delta \setminus \Omega
\end{cases}
\end{equation}
belongs to $\Lambda_\delta (\Omega)$ and
\begin{equation*}
F_\e(u^\delta,\Omega_\delta) = F_\e(u,\Omega) + \e \int_{S_\delta} |\nabla ^2  \dist(x, \partial \Omega)|^2 dx.
\end{equation*} 
Similarly the restriction to $\Omega$ of any function in $\Lambda_\delta(\Omega)$ belongs to $\Lambda(\Omega)$, so that the two minimum problems are equivalent.
We will also denote by 
\begin{equation*}
A_\delta(\Omega):= \left\{ m \in A(\Omega_\delta): m = -\nabla^\perp \dist(\cdot, \partial \Omega) \mbox{ in }S_\delta\right\}.
\end{equation*}

We will prove the following result:
\begin{theorem}\label{T_unique}
Let $\Omega$ be an ellipse. Then the function $\bar u^\delta$, defined by \eqref{E_udelta} with $\bar u =  \dist(x, \partial \Omega)$, 
is the unique minimizer of $\tilde F_0(\cdot, \Omega_\delta)$ in the space
\begin{equation*}
\Lambda^0_\delta(\Omega):=\left\{ u \in W^{1,2}(\Omega_\delta) : \nabla^\perp u \in A(\Omega_\delta) \mbox{ and } u= \bar u^\delta\mbox{ in } S_\delta \right\}.
\end{equation*}
\end{theorem}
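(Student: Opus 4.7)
The plan is to combine the sharp lower bound of \cite{JK_entropies} with the kinetic formulation of Theorem \ref{T_kin} in order to first establish property $(3'')$ for any minimizer $u \in \Lambda^0_\delta(\Omega)$, and then use the resulting $\BV$-type rigidity of $\nabla u$ to rule out any minimizer distinct from $\bar u^\delta$. The value of the minimum is already known from \eqref{E_JK}, so the task is purely one of rigidity: characterize the equality case.

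\textbf{Step 1: saturated entropy identity.} Revisiting the calibration argument behind \eqref{E_JK}, one produces, for the ellipse $\Omega$, an entropy $\Phi_0 \in \E_\pi$ (obtained as a suitable linear combination of the Jin--Kohn entropies $\Sigma_{\alpha_1,\alpha_2}$ parametrized by the normals to the focal segment) and a test function $\zeta_0 \in C^1_c(\Omega_\delta)$ such that
\begin{equation*}
\int_{\Omega_\delta} \Phi_0(m) \cdot \nabla \zeta_0 \, dx = F_0(\bar u^\delta, \Omega_\delta)
\end{equation*}
for every $m \in A_\delta(\Omega)$, the right-hand side being fixed by the prescribed trace on $S_\delta$. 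Together with the definition of $\tilde F_0$ this gives $\tilde F_0(u, \Omega_\delta) \ge F_0(\bar u^\delta, \Omega_\delta)$ for every $u\in\Lambda^0_\delta(\Omega)$, with equality iff $\mu_{\Phi_0}$ is concentrated on the focal segment and saturates the pointwise entropy bound there.

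\textbf{Step 2: upgrading to $(3'')$ (main obstacle).} The crux is to promote the concentration of the single production $\mu_{\Phi_0}$ to that of $\nu_0$, i.e.\ to $(3'')$ for the minimizer $u$. Here Proposition \ref{P_factorization} is the key tool: any putative $\nu_0$-mass on $\Omega_\delta \setminus J$ forces the conditional kinetic measure $\sigma_{0,x}$ to take the explicit symmetric form $\pm\frac{1}{4}\bigl(\delta_{\bar s}+\delta_{\bar s+\pi}-\frac{1}{\pi}\L^1\bigr)$. Via the identity \eqref{E_kin_entropy}, such a contribution can be tested by rotated copies $\Phi_0^\theta$ of $\Phi_0$; the ellipse geometry ensures that the family $\{\Phi_0^\theta\}_{\theta\in\R/2\pi\Z}$ sweeps out all normal directions, so for some $\theta$ the rotated production $\mu_{\Phi_0^\theta}$ picks up an extra non-negative mass outside the focal segment, contradicting the saturated bound from Step 1. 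Making this contradiction quantitative -- in particular, organizing the rotated family so that the gain can be compared against the loss in the Jin--Kohn identity -- is the main technical obstacle, and is precisely the analogue, restricted to minimizers on the ellipse, of the still-open property $(3'')$.

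\textbf{Step 3: characteristic rigidity.} Once $(3'')$ is available, $\tilde F_0(u, \Omega_\delta) = F_0(u, \Omega_\delta) = \frac{1}{3}\int_{J_{\nabla u}}|\nabla u^+ - \nabla u^-|^3 \, d\H^1$, so $u$ is a $\BV$ solution of the eikonal equation in the sense of \cite{ADLM_eikonal}. Standard characteristic analysis then shows that on $\Omega_\delta \setminus J_{\nabla u}$ the vector field $\nabla u$ is constant along straight line segments, and the boundary condition $u=\bar u^\delta$ on $S_\delta$ forces these segments to be the inward normals to $\partial \Omega$. Two such characteristics can only intersect on $J_{\nabla u}$, and the strict convexity of the cubic cost combined with $|\nabla u^\pm|=1$ forces $J_{\nabla u}$ to coincide with the cut locus of $\partial \Omega$ -- the focal segment of the ellipse -- and hence $u=\bar u^\delta$. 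This yields the uniqueness of the minimizer.
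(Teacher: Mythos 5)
Your Step 2 is where the argument breaks down, and you essentially concede this yourself: you reduce the whole problem to ``the analogue, restricted to minimizers on the ellipse, of the still-open property $(3'')$'' and offer only a heuristic (testing with rotated entropies $\Phi_0^\theta$) for why the non-jump part of $\nu_0$ should contradict the saturated Jin--Kohn bound. This is not a proof, and the proposed mechanism is not the one that works: Proposition \ref{P_factorization} alone (the form $\pm\frac14(\delta_{\bar s}+\delta_{\bar s+\pi}-\frac1\pi\L^1)$ of $\sigma_{0,x}$ off $J$) does not by itself produce any sign or quantitative violation when tested against further entropies, since such a $\sigma_{0,x}$ is compatible with zero production for many $\Phi$. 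The paper closes exactly this gap by a different route. First, the equality case of the Jin--Kohn identity \eqref{E_ADM_algebraic} gives two pieces of information for any minimizer: $\div\Sigma_{\e_1,\e_2}(m)=0$ \emph{and} $\div\Sigma_{e_1,e_2}(m)\ge 0$. The vanishing of the single entropy production $\Sigma_{\e_1,\e_2}$, combined with the explicit classification of the minimal kinetic measure (Corollary \ref{C_disintegration}: atoms at $\{\bar s\pm\pi/2\}$ off $J$, the profile $\bar g_\beta$ on $J$), forces $\bar s\in\frac\pi2\Z$ at $\nu_{\min}$-a.e.\ point off $J$; then the Lagrangian representation (Lemma \ref{L_jumps}) plus the $\BV$ Lusin-type Lemma \ref{L_BV_Lusin} show that this residual mass must vanish, i.e.\ $\nu_{\min}$ is concentrated on $J$ and $J$ lies on countably many horizontal/vertical segments (Proposition \ref{P_vanishing}). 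This is precisely the restricted $(3'')$ you could not obtain, and it is not ``essentially the open problem'': the single vanishing entropy is what makes it tractable.

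Your Step 3 also contains an unjustified leap: concentration of $\nu_0$ on $J$ (property $(3'')$) does \emph{not} imply $\nabla u\in\BV_{\loc}$, so you cannot invoke the $\BV$ characteristic theory of \cite{ADLM_eikonal}, and the claim that strict convexity of the cubic cost forces $J_{\nabla u}$ to be the cut locus is asserted, not proved. The paper avoids $\BV$ entirely: using the sign information $\div\Sigma_{e_1,e_2}(m)\ge0$ it first derives a sign condition on $\partial_s\sigma_{\min}$ over the angular sectors $(0,\pi/2)\cup(\pi,3\pi/2)$, then tests the kinetic equation \eqref{E_kinetic} with explicit cut-off functions on regions bounded by oblique lines entering through $S_\delta$ (where $m=\bar m$ is prescribed) to show $\nu_{\min}$ is concentrated on the two axes of the ellipse; finally, away from the axes the kinetic equation becomes $e^{is}\cdot\nabla_x\chi=0$, and directional invariance of the sets $\{e^{is}\cdot m>0\}$ together with the boundary data pins down $m$ at every Lebesgue point, giving $m=\nabla^\perp\dist(\cdot,\partial\Omega)$. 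So the overall strategy of your proposal (calibration lower bound, then rigidity of the equality case) matches the paper's, but the two decisive steps — establishing concentration via the Lagrangian/kinetic-measure machinery rather than a rotated-entropy contradiction, and the $\BV$-free propagation argument — are missing or incorrect as written.
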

We show now that Theorem \ref{T_main} is a corollary of Theorem \ref{T_unique} and the previous mentioned results: indeed 
let $\e_k\to 0$ as $k\to \infty$ and for any $k$ let $u_{\e_k}$ be a minimizer of $F_{\e_k}(\cdot, \Omega)$ on 
$\Lambda(\Omega)$.
By Theorem \ref{T_compactness} and \eqref{E_JK} we have that every limit point $u_0$ of $u_{\e_k}$ belongs to $\Lambda^0(\Omega)$ and
moreover it holds
\begin{equation*}
\tilde F_0(u^\delta_0,\Omega_\delta) \le \liminf_{k\to \infty} F_{\e_k}(u^\delta_{\e_k},\Omega_\delta) =  \liminf_{k\to \infty} F_{\e_k}(u_{\e_k},\Omega) = \lim_{k\to \infty} \min_{\Lambda(\Omega)}F_{\e_k}(\cdot,\Omega) = \tilde F_0(\bar u,\Omega)=\tilde F_0(\bar u^\delta,\Omega_\delta).
\end{equation*}
Since $\bar u^\delta$ is the only minimizer of $\tilde F_0(\cdot, \Omega_\delta)$ in $\Lambda^0_\delta(\Omega)$, then $u_0^\delta = \bar u^\delta$, namely $u_0=\bar u$. 

\newpage
\subsection{Related results}
\subsubsection{Zero-energy states} The only case in which the behavior of minimizers of $F_\e(\cdot,\Omega)$ as $\e \to 0$ is completely
understood is when $\lim_{\e\to 0}\min F_\e(\cdot,\Omega)=0$.  
All the sets $\Omega$ admitting sequences with vanishing energy were characterized in \cite{JOP_zero-energy} and with 
the appropriate boundary conditions the limit function is in these cases $\bar u = \dist(\cdot, \partial\Omega)$. 
A quantitative version of this result is proven in \cite{Lorent_quantitative} (see also \cite{Lorent_simple}). 
In a different direction, it was shown in \cite{LP_two_entropies} that the vanishing of the two entropy defect measures 
$\div \Sigma_{e_1,e_2}(m)$ and $\div \Sigma_{\e_1,\e_2}(m)$ is sufficient to establish $\div \,\Phi(m)=0$ for every $\Phi \in \E$. 
Here we denoted by $(e_1,e_2)$ the standard orthonormal system in $\R^2$ and by 
\begin{equation*}
(\e_1,\e_2):= \left( \left( \frac{1}{\sqrt 2}, \frac{1}{\sqrt 2} \right), \left(-\frac{1}{\sqrt 2}, \frac{1}{\sqrt 2}  \right)\right)
\end{equation*}
the orthonormal system obtained by performing a rotation of $(e_1,e_2)$ by $\pi/4$.

\subsubsection{States with a vanishing entropy defect measure}
The case when $\Omega$ is an ellipse is special since we know a priori that there exists an orthonormal system $(\alpha_1,\alpha_2)$ in $\R^2$ for which the minimizers $u^\delta$ in $A(\Omega_\delta)$ of the asymptotic problem $\tilde F_0(\cdot, \Omega_\delta)$ satisfy
\begin{equation}\label{E_vanishing_entropy}
\div \Sigma_{\alpha_1,\alpha_2}\left(\nabla^\perp u^\delta\right)=0 \qquad \mbox{in }\D'(\Omega_\delta).
\end{equation}
This situation has been considered more extensively in \cite{IM_eikonal,Ignat_confluentes}, where in particular the authors proved the minimizing property of the viscosity
solution \eqref{E_JK} for more general domains and functionals. In this direction we only mention here that the same arguments of this paper 
allow to prove Theorem \ref{T_main} also in the case where $\Omega$ is a stadium, namely a domain of the form
\begin{equation*}
\Omega = \{ x \in \R^2: \dist ( x, [0,L]\times \{0\}) < R\} \qquad \mbox{for some }L,R>0.
\end{equation*}
We finally mention that under the additional assumption \eqref{E_vanishing_entropy} we can prove Property (3'').

\subsubsection{A micromagnetics model}
A family of functionals $E_\e$ strictly related to \eqref{E_functional} was introduced in \cite{RS_magnetism,RS_magnetism2} in the context of micro-magnetics. 
An analogous result to Theorem \ref{T_main} was proved in \cite{ALR_viscosity} even for general smooth domains $\Omega$, while the $\Gamma$-limit
conjecture is still open also in this setting. Although Theorem \ref{T_structure} has a perfect analogue for the elements in the asymptotic domain of $E_\e$ (see \cite{AKLR_rectifiability}), the main difficulty seems to be a still not complete understanding of the fine properties of these elements. 
In this direction we notice that the method used here to establish Proposition \ref{P_factorization} gives the analogue in this setting of the concentration property (3') (see \cite{M_RS}).

\section{Lagrangian representation of elements in $A(\Omega)$}
The Lagrangian representation is an extension of the classical method of characteristics to the non-smooth setting: it was introduced in
the framework of scalar conservation laws in \cite{BBM_multid,M_Lebesgue} building on the kinetic formulation from \cite{LPT_kinetic}.
This approach is strongly inspired by the decomposition in elementary solutions of non-negative measure valued solutions of the
linear transport equation, called superposition principle (see \cite{AC_superposition}). Indeed by Theorem \ref{T_kin}, 
the vector fields $m \in A(\Omega)$ are represented by the solution $\chi$ of the linear transport equation \eqref{E_kinetic}. 
The main difficulty in this case is due to the source term which is merely a derivative of a measure.
This issue is reflected in the lack of regularity of the characteristics detected by our Lagrangian representation, which have bounded variation but they are in general not continuous.
A fundamental feature for our analysis is that we can decompose the kinetic measure $\sigma$ in \eqref{E_kinetic} along the characteristics.

\subsection{Lagrangian representation}

We introduce the following space of curves:
given $T>0$ we let
\begin{equation*}
\Gamma:= \left\{ (\gamma,t^-_\gamma,t^+_\gamma): 0\le t^-_\gamma\le t^+_\gamma\le T,
\gamma=(\gamma_x,\gamma_s)\in \BV((t^-_\gamma,t^+_\gamma);\Omega \times \R/2\pi \Z) , \gamma_x \mbox{ is Lipschitz} \right\}.
\end{equation*}
We will always consider the right-continuous representative of the component $\gamma_s$.
Moreover we will adopt the notation from \cite{AFP_book} for the decomposition of the measure $Dv$ where $v \in \BV(I;\R)$ for some interval $I\subset \R$:
\begin{equation*}
Dv = \tilde D v + D^j v,
\end{equation*}
where $\tilde D v$ denotes the sum of the absolutely continuous part and the Cantor part of $Dv$ and $D^jv$ denotes the jump part of $Dv$.
We will need to consider also $\tilde D v$ for functions $v\in \BV (I;\R/2\pi \Z)$. In this case $\tilde Dv = \tilde Dw$ where $w$ is any function
in $\BV(I;\R)$ such that for every $z \in I$ the value $w(z)$ belongs to the class $v(z)$ in $\R/2\pi\Z$.
For every $t \in (0,T)$ we consider the section 
\begin{equation*}
 \Gamma(t):= \left\{\left(\gamma,t^-_\gamma,t^+_\gamma\right)\in  \Gamma: t \in \left(t^-_\gamma,t^+_\gamma\right)\right\}
\end{equation*}
and we denote by 
\begin{equation*}
\begin{split}
 e_t: \Gamma(t) &\to \Omega \times \R/2\pi \Z \\
(\gamma, t^-_\gamma, t^+_\gamma) & \mapsto  \gamma(t).
\end{split}
\end{equation*}

\begin{definition}\label{D_Lagr}
Let $m\in A(\Omega)$ and $\Omega'$ be a $W^{2,\infty}$-open set compactly contained in $\Omega$
We say that a finite non-negative Radon measure $\omega \in \M( \Gamma)$ is a \emph{Lagrangian representation} of $m$ in $\Omega'$ if the following conditions hold:
\begin{enumerate}
\item for every $t\in (0,T)$ it holds
\begin{equation}\label{E_repr_formula}
( e_t)_\sharp \left[ \omega \llcorner  \Gamma(t)\right]= \chi \L^{2}\times \L^1,
\end{equation}
where $\chi$ is defined in \eqref{E_def_chi};
\item the measure $\omega$ is concentrated on curves $(\gamma,t^-_\gamma,t^+_\gamma)\in  \Gamma$ such that for $\L^1$-a.e. $t \in (t^-_\gamma,t^+_\gamma)$ 
the following characteristic equation holds:
\begin{equation}\label{E_characteristic}
\dot\gamma_x(t)= e^{i \gamma_s(t)};
\end{equation}
\item it holds the integral bound
\begin{equation*}\label{E_reg}
\int_{ \Gamma} \TV_{(0,T)} \gamma_s d\omega(\gamma) <\infty;
\end{equation*}
\item for $\omega$-a.e. $(\gamma,t^-_\gamma,t^+_\gamma)\in \Gamma$ it holds
\begin{equation*}
t^-_\gamma>0 \Rightarrow  \gamma_x(t^-_\gamma +) \in \partial \Omega', \qquad \mbox{and} \qquad 
t^+_\gamma<T \Rightarrow  \gamma_x(t^+_\gamma -) \in \partial \Omega'.
\end{equation*}
\end{enumerate}
\end{definition}

For every curve $\gamma \in \Gamma$ we define the measure $\sigma_\gamma \in \M((0,T)\times \Omega' \times \R/2\pi\Z)$ by
\begin{equation}\label{E_def_tildesigmagamma}
 \sigma_\gamma=(\Id, \gamma)_\sharp  \tilde D_t \gamma_s + \H^1\llcorner E_{\gamma}^+ -\H^1\llcorner E_{\gamma}^-,
\end{equation}
where
\begin{equation}\label{E_def_Epmgamma}
\begin{split}
E_\gamma^+ &:= \{(t,x,s) \in (0,T)\times \Omega\times \R/2\pi\Z: \gamma_x(t)=x  \mbox{ and } \gamma_s(t-)\le s \le \gamma_s(t+)\le \gamma_s(t-)+\pi \}, \\
E_\gamma^- & := \{(t,x,s) \in (0,T)\times \Omega\times \R/2\pi\Z: \gamma_x(t)=x  \mbox{ and } \gamma_s(t+)\le s \le \gamma_s(t-)< \gamma_s(t+)+\pi \}.
\end{split}
\end{equation}
Notice that since $\R/2\pi\Z$ is not ordered, given $s_1\ne s_2 \in \R/2\pi\Z$ the condition $s_1< s_2$ is not defined. 
Nevertheless we use the notation $s \in (s_1,s_2)$ or $s_1<s<s_2$ to indicate the following condition (depending only on the orientation of $\R/2\pi\Z$):
if $t_1,t_2 \in \R$ are such $t_1<t_2<t_1+2\pi$, $e^{it_1}=e^{is_1}$ and $e^{it_2}=e^{is_2}$ then there exists $t \in (t_1,t_2)$ such that
$e^{it}=e^{is}$.

\begin{lemma}\label{L_Lagr_meas}
Let $\omega$ be a Lagrangian representation of $m\in A(\Omega)$ on $\Omega'$. Let us denote by
\begin{equation*}
 \sigma_\omega:= -  \int_\Gamma  \sigma_\gamma d\omega
\end{equation*}
and by $\tilde \chi :(0,T)\times \Omega\times \R/2\pi\Z \to \R$ the function defined by
$\tilde \chi(t,x,s)=\chi(x,s)$ for every $t \in (0,T)$.
Then it holds
\begin{equation}\label{E_kin_tilde}
e^{is}\cdot \nabla_x \tilde \chi = \partial_s  \sigma_\omega \quad \in \D'((0,T)\times \Omega'\times \R/2\pi\Z).
\end{equation}
\end{lemma}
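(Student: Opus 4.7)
The plan is to test the equation against an arbitrary $\varphi\in C^\infty_c((0,T)\times \Omega'\times \R/2\pi\Z)$ and derive the identity
\begin{equation*}
\int \tilde \chi\, e^{is}\cdot \nabla_x \varphi \, dt\,dx\,ds = \int \partial_s \varphi \, d\sigma_\omega
\end{equation*}
from the BV chain rule applied to the function $f_\gamma(t) := \varphi(t,\gamma_x(t),\gamma_s(t))$ along $\omega$-a.e.\ curve $\gamma$. Using \eqref{E_repr_formula}, Fubini (justified by condition (3) in Definition \ref{D_Lagr} together with the boundedness of $\nabla\varphi$), and the characteristic equation \eqref{E_characteristic}, the left-hand side becomes
\begin{equation*}
\int \tilde\chi\, e^{is}\cdot \nabla_x \varphi\, dt\,dx\,ds= \int_\Gamma \int_{t^-_\gamma}^{t^+_\gamma} \dot\gamma_x(t)\cdot \nabla_x \varphi(t,\gamma_x(t),\gamma_s(t))\,dt\,d\omega(\gamma).
\end{equation*}

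Next, for $\omega$-a.e. $\gamma$, the map $f_\gamma$ belongs to $\BV((t^-_\gamma, t^+_\gamma))$ (since $\gamma_x$ is Lipschitz and $\gamma_s \in \BV$), and one checks that the boundary values at $t=t^\pm_\gamma$ vanish: if $t^-_\gamma>0$ then $\gamma_x(t^-_\gamma+)\in \partial \Omega'$ by condition (4), so $\varphi(t,\gamma_x,\gamma_s)=0$ at that endpoint since $\varphi$ has compact support in $\Omega'$, and otherwise the endpoint lies on $\{0\}$ or $\{T\}$ where $\varphi$ vanishes by compact support in time. The BV chain rule therefore gives the identity
\begin{equation*}
0= \int_{t^-_\gamma}^{t^+_\gamma}\!\!\left(\partial_t\varphi + \dot\gamma_x\cdot \nabla_x \varphi\right)dt + \int \partial_s \varphi(t,\gamma_x(t),\gamma_s(t))\, d\tilde D_t\gamma_s + \sum_{\text{jumps }t_0} \!\bigl[\varphi(t_0,\gamma_x,\gamma_s(t_0+))-\varphi(t_0,\gamma_x,\gamma_s(t_0-))\bigr].
\end{equation*}
Each jump contribution is rewritten by integrating $\partial_s\varphi$ along the oriented arc in $\R/2\pi\Z$ connecting $\gamma_s(t_0-)$ to $\gamma_s(t_0+)$ of length at most $\pi$: the definitions of $E_\gamma^\pm$ in \eqref{E_def_Epmgamma} precisely encode this arc and its orientation, so that the sum of jumps equals $\int \partial_s\varphi\, d(\H^1\llcorner E_\gamma^+ -\H^1\llcorner E_\gamma^-)$. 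Recognising the three terms as $\int \partial_s\varphi\, d\sigma_\gamma$ via \eqref{E_def_tildesigmagamma}, one obtains
\begin{equation*}
\int_{t^-_\gamma}^{t^+_\gamma} \dot\gamma_x\cdot \nabla_x\varphi\, dt= -\int_{t^-_\gamma}^{t^+_\gamma}\partial_t \varphi\, dt - \int \partial_s \varphi\, d\sigma_\gamma.
\end{equation*}

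Integrating with respect to $\omega$ and comparing with the identity produced in the first paragraph, the conclusion follows once one verifies that the residual term involving $\partial_t\varphi$ vanishes. Applying Fubini and \eqref{E_repr_formula} again,
\begin{equation*}
\int_\Gamma \int_{t^-_\gamma}^{t^+_\gamma}\partial_t\varphi(t,\gamma_x(t),\gamma_s(t))\, dt\, d\omega = \int_0^T\!\! \int_{\Omega'\times \R/2\pi\Z}\chi(x,s)\, \partial_t\varphi(t,x,s)\, dx\, ds\, dt = \int\chi(x,s)\!\!\left[\int_0^T \partial_t\varphi\, dt\right]dx\, ds=0,
\end{equation*}
because $\varphi$ has compact support in $t$ and $\chi$ does not depend on $t$. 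Combining this with the definition $\sigma_\omega= -\int_\Gamma \sigma_\gamma\, d\omega$ yields \eqref{E_kin_tilde}. The delicate point I expect to require the most care is the BV chain rule step: one has to make sense of $\tilde D_t\gamma_s$ for $\R/2\pi\Z$-valued functions (via local lifts to $\R$, as recalled before the statement), and match the sign conventions of $E_\gamma^\pm$ with the orientation chosen in $\R/2\pi\Z$; everything else is an application of Fubini, the characteristic equation, and the representation formula.
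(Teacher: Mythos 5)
Your proposal is correct and follows essentially the same route as the paper's proof: reduce to curves via the representation formula \eqref{E_repr_formula} and the characteristic equation \eqref{E_characteristic}, apply the BV chain rule along each curve, identify the diffuse and jump contributions with $\sigma_\gamma$ through \eqref{E_def_tildesigmagamma}--\eqref{E_def_Epmgamma}, and kill the leftover time-derivative term using condition (4) of Definition \ref{D_Lagr} together with the time-independence of $\chi$. The only immaterial difference is that the paper tests with tensor products $\zeta(t)\varphi(x,s)$ and integrates by parts in $t$, whereas you test with general $\varphi(t,x,s)$ and observe directly that $\int_0^T\partial_t\varphi\,dt=0$.
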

\begin{proof}
We show that \eqref{E_kin_tilde} holds when tested with every function of the form $\phi(t,x,s)=\zeta(t)\varphi(x,s)$ with $\zeta \in C^\infty_c((0,T))$ and $\varphi \in C^\infty_c(\Omega'\times \R/2\pi\Z)$.
It follows from \eqref{E_repr_formula} and \eqref{E_characteristic} that
\begin{equation}\label{E_chain1}
\begin{split}
\langle e^{is}\cdot \nabla_x \tilde \chi, \phi\rangle = &~ - \int e^{is}\cdot \nabla_x \varphi(x,s) \zeta(t) \tilde \chi(t,x,s)dtdxds \\
=&~- \int_{(0,T)}\int_{\Gamma(t)} e^{i\gamma_s(t)}\cdot \nabla_x \varphi( \gamma(t)) d\omega  \zeta(t) dt \\
=&~- \int_\Gamma \int_{t^-_\gamma}^{t^+_\gamma} \frac{d}{dt} \gamma_x(t) \cdot \nabla_x \varphi (\gamma(t)) \zeta(t) dt d\omega.
\end{split}
\end{equation}
By the chain-rule for functions with bounded variation we have the following equality between measures:
\begin{equation*}
\frac{d}{dt}\varphi \circ \gamma =  \nabla_x \varphi (\gamma(t)) \cdot \frac{d}{dt} \gamma_x(t)  
+ \partial_s\varphi (\gamma(t)) \tilde D_t \gamma_s + \sum_{t_j \in J_\gamma} \left( \varphi (t_j,\gamma(t_j+)) -  \varphi (t_j,\gamma(t_j-))\right) \delta_{t_j},
\end{equation*}
where $J_{\gamma}$ denotes the jump set of $\gamma$.
Therefore, proceeding in the chain \eqref{E_chain1}, we have
\begin{equation*}\label{E_chain2}
\begin{split}
\langle e^{is}\cdot \nabla_x \tilde \chi, \phi\rangle = &~ 
- \int_\Gamma \int_{t^-_\gamma}^{t^+_\gamma} \frac{d}{dt} \varphi(\gamma(t)) \zeta(t) dt d\omega
+ \int_\Gamma \int_{t^-_\gamma}^{t^+_\gamma} \partial_s\varphi (\gamma(t))\zeta(t) d \tilde D_t \gamma_s(t) d\omega \\
& ~ + \int_\Gamma  \sum_{t_j \in J_\gamma}  \big( \varphi (t_j,\gamma(t_j+)) -  \varphi (t_j,\gamma(t_j-))\big) \zeta(t_j) d\omega.
\end{split}
\end{equation*}
By definition of $\sigma_\gamma$ in \eqref{E_def_tildesigmagamma} it holds
\begin{equation*}
 \int_{t^-_\gamma}^{t^+_\gamma} \partial_s\varphi (\gamma(t))\zeta(t) d \tilde D_t \gamma_s(t) 
 +  \sum_{t_j \in J_\gamma}  \big( \varphi (t_j,\gamma(t_j+)) -  \varphi (t_j,\gamma(t_j-))\big) \zeta(t_j) d\omega = \int \partial_s \varphi (x,s) \zeta(t) d  \sigma_\gamma,
\end{equation*}
therefore in order to establish  $\langle e^{is}\cdot \nabla_x \tilde \chi, \phi\rangle = \langle \partial_s  \sigma_\omega, \phi \rangle$ it suffices to prove that 
\begin{equation*}
\int_\Gamma \int_{t^-_\gamma}^{t^+_\gamma} \frac{d}{dt} \varphi(\gamma(t)) \zeta(t) dt d\omega = 0.
\end{equation*}
By Point (4) in Definition \ref{D_Lagr} for $\omega$-a.e. $\gamma \in \Gamma$ it holds $\varphi(\gamma (t^-_\gamma  +))=  \varphi(\gamma (t^+_\gamma  -))=0$ and in particular 
\begin{equation*}
\begin{split}
\int_\Gamma \int_{t^-_\gamma}^{t^+_\gamma} \frac{d}{dt} \varphi(\gamma(t)) \zeta(t) dt d\omega = &~ -\int_\Gamma \int_{t^-_\gamma}^{t^+_\gamma}\varphi(\gamma(t)) \zeta'(t) dt d\omega \\
=&~ \int_{(0,T)\times \Omega \times \R/2\pi\Z} \tilde \chi \varphi (x,s)\zeta'(t) dt dx ds \\
=&~ 0,
\end{split}
\end{equation*}
where we used \eqref{E_repr_formula} in the second equality and that $\tilde \chi$ does not depend on $t$ in the last equality.
This concludes the proof.
\end{proof}

\begin{definition}
We say that $\sigma \in \M_{\loc}(\Omega'\times \R/2\pi\Z)$ is a \emph{minimal kinetic measure} if it satisfies \eqref{E_kinetic} and for every $\sigma'$
solving \eqref{E_kinetic} it holds 
\begin{equation*}
\nu_\sigma := (p_x)_\sharp |\sigma| \le (p_x)_\sharp|\sigma'| =: \nu_{\sigma'}. 
\end{equation*}
We moreover say that $\omega$ is a a \emph{minimal Lagrangian representation} 
of $m$ if it is a Lagrangian representation of $m$ according to Def. \ref{D_Lagr} and 
\begin{equation*}
\tilde \sigma_\omega = \L^1 \otimes \sigma_t
\end{equation*}
with $\sigma_t$ minimal kinetic measure for $\L^1$-a.e. $t\in (0,T)$.
\end{definition}

The existence of a minimal kinetic measure is proven the following lemma.
\begin{lemma}\label{L_minimal}
For every $m\in A(\Omega)$ there exists a minimal kinetic measure $\sigma$. Moreover there exists $\nu_{\min}\in \M_{\loc}(\Omega)$ such that for every 
minimal kinetic measure $\sigma$ it holds $\nu_{\min} = (p_x)_\sharp |\sigma|$.
\end{lemma}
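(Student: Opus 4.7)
The plan is to parameterize all solutions of \eqref{E_kinetic} by a single scalar measure on $\Omega$ and then minimize pointwise in $x$. I would first fix a kinetic measure $\sigma_1$ (existence from Theorem \ref{T_kin}) and observe that if $\sigma,\sigma'$ are two solutions, then $\partial_s(\sigma-\sigma')=0$ in $\D'(\Omega\times\R/2\pi\Z)$; testing against product functions $\psi(x)\chi(s)$ and using that $C^1(\R/2\pi\Z)/\{\chi':\chi\in C^1\}$ is one-dimensional shows that $\sigma-\sigma'=\mu\otimes\L^1$ for some $\mu\in\M_\loc(\Omega)$. Hence every solution is of the form $\sigma_1+\mu\otimes\L^1$.

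Next I would reduce the search to $\mu\ll\nu_1:=(p_x)_\sharp|\sigma_1|$. Writing $\mu=\mu_{\ac}+\mu_s$ in its Lebesgue decomposition with respect to $\nu_1$, the $x$-marginals of $\sigma_1+\mu_{\ac}\otimes\L^1$ and of $\mu_s\otimes\L^1$ are mutually singular, so
\begin{equation*}
(p_x)_\sharp|\sigma_1+\mu\otimes\L^1|=(p_x)_\sharp|\sigma_1+\mu_{\ac}\otimes\L^1|+2\pi|\mu_s|,
\end{equation*}
and any minimizer must have $\mu_s=0$.

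The heart of the argument is then a pointwise minimization. Disintegrating $\sigma_1=\nu_1\otimes\alpha_x$ with signed measures $\alpha_x$ of total variation $1$, and writing $\mu_{\ac}=f\nu_1$ with $f\in L^1_\loc(\nu_1)$, one computes
\begin{equation*}
(p_x)_\sharp|\sigma_1+f\nu_1\otimes\L^1|(A)=\int_A h(\alpha_x,f(x))\,d\nu_1(x),\qquad h(\alpha,c):=|\alpha+c\L^1|(\R/2\pi\Z).
\end{equation*}
The function $c\mapsto h(\alpha,c)$ is convex, continuous, and coercive (from $h(\alpha,c)\ge 2\pi|c|-|\alpha|(\R/2\pi\Z)$), so it attains its minimum $g(\alpha)$ on a non-empty compact convex set $C(\alpha)\subset\R$ that is bounded uniformly for $|\alpha|(\R/2\pi\Z)=1$. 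A standard measurable selection, e.g.\ $f^*(x):=\min C(\alpha_x)$, produces a bounded Borel function, hence $\sigma_*:=\sigma_1+f^*\nu_1\otimes\L^1$ is a well-defined kinetic measure in $\M_\loc(\Omega\times\R/2\pi\Z)$.

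To conclude, for any solution $\sigma=\sigma_1+f\nu_1\otimes\L^1+\mu_s\otimes\L^1$ one obtains $(p_x)_\sharp|\sigma|=h(\alpha_x,f(x))\nu_1+2\pi|\mu_s|\ge g(\alpha_x)\nu_1=(p_x)_\sharp|\sigma_*|$ as measures, proving minimality of $\sigma_*$; conversely any other minimal kinetic measure must share the same $x$-projection $\nu_{\min}:=g(\alpha_x)\nu_1$, since the minimum value of $h$ is intrinsically determined by $\alpha_x$. The main delicate step I expect is verifying that the pointwise minimization yields an inequality between measures (rather than merely between total masses on compactly contained sets), together with the measurable selection; both ultimately rely on the weak-$*$ measurability of $x\mapsto\alpha_x$ coming from the disintegration theorem.
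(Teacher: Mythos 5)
Your proposal is correct and follows essentially the same route as the paper: every kinetic measure is a fixed reference solution plus $\mu\otimes\L^1$, one discards the part of $\mu$ singular with respect to the reference marginal, and minimality is obtained by minimizing $c\mapsto\|\sigma_x+c\,\L^1\|$ pointwise in the disintegration, with $\nu_{\min}$ given by the pointwise minimum times the reference marginal. The only difference is that you spell out steps (the Lebesgue decomposition of $\mu$, convexity/coercivity of $h$, and the measurable selection) that the paper treats as immediate.
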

\begin{proof}
Since $\partial_s \sigma$ is uniquely determined by \eqref{E_kinetic}, we have that a kinetic measure $\sigma$ is minimal if and only if for 
$\nu_\sigma$-a.e. $x \in \Omega$ the disintegration $\sigma_x$ satisfies the following inequality:
\begin{equation}\label{E_min}
1= \|\sigma_x \| \le \left\|\sigma_x + \alpha \L^1\right\| \qquad \forall \alpha \in \R.
\end{equation}
Therefore all minimal kinetic measures are of the form 
\begin{equation*}
\nu_{\sigma_0} \otimes \left( (\sigma_0)_x + \alpha(x) \mathcal L^1 \right),
\end{equation*}
where $\alpha : \Omega \to \R$ is a measurable function such that for $\nu_{\sigma_0}$-a.e. $x \in \Omega$ it holds
\begin{equation}\label{E_minimal_kinetic}
\left\| (\sigma_0)_x + \alpha(x) \mathcal L^1 \right\| \le \left\|(\sigma_0)_x + c\mathcal L^1\right\| \qquad \forall c \in \R.
\end{equation}
The existence of such an $\alpha$ is trivial and in particular it holds
\begin{equation*}
\nu_{\min} = \left(\min_{\alpha \in \R} \left\| (\sigma_0)_x + \alpha \mathcal L^1 \right\|\right) \nu_0. \qedhere
\end{equation*} 
\end{proof}
In Section \ref{S_structure} we will show that for every $m\in A(\Omega)$ there exists a \emph{unique} minimal kinetic measure $\sigma_{\min}$, namely that for $\nu_{\min}$-a.e. $x \in \Omega$ there exists a unique $\alpha(x)$ such that \eqref{E_minimal_kinetic} holds.

The main result of this section is the following:
\begin{proposition}\label{P_Lagr}
Let $\Omega\subset \R^2$ be a bounded open set, $m\in A(\Omega)$ 
and $\Omega'$ be a $W^{2,\infty}$ open set compactly contained in $\Omega$ be such that $\H^1$-a.e. $x \in \partial \Omega'$
is a Lebesgue point of $m$.
Then there exists a minimal Lagrangian representation $\omega$ of $m$ on $\Omega'$. In particular it holds
\begin{equation}\label{E_moduli}
|\sigma_\omega|=\int_\Gamma |\sigma_\gamma|d\omega.
\end{equation}
\end{proposition}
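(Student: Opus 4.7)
The strategy adapts the Lagrangian representation scheme of \cite{BBM_multid, M_Lebesgue} from scalar conservation laws. I would fix $T > \diam(\Omega')$ and lift the kinetic equation of Theorem \ref{T_kin} to $(0,T) \times \Omega' \times \R/2\pi\Z$ by setting $\tilde\chi(t,x,s) := \chi(x,s)$. Then $\tilde\chi$ satisfies a continuity-type equation with drift $(1, e^{is}, 0)$ and measure source $\L^1 \otimes \sigma$ in the $s$-direction. The candidate $\omega$ is a disintegration of $\tilde\chi\,dt\,dx\,ds$ along integral curves of the drift which are allowed to jump in the $s$-variable, with the jumps balancing the source.

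The construction proceeds by regularization in $x$. Let $\chi_\eta := \chi *_x \rho_\eta$; since $e^{is}$ does not depend on $x$, convolution in $x$ commutes with the kinetic equation and yields the exact identity $e^{is}\cdot \nabla_x \chi_\eta = \partial_s \sigma_\eta$, where $\sigma_\eta := \sigma *_x \rho_\eta$ is smooth in $x$ and of bounded variation in $s$. For each $\eta$ I would construct an approximate representation $\omega_\eta \in \M(\Gamma)$ by following smooth characteristics $\dot\gamma_x = e^{i\gamma_s}$ interspersed with $s$-jumps prescribed by a Skorokhod-type discretization of $\sigma_\eta$; each jump is placed so that the intercepted mass of $\sigma_\eta$ is exactly matched and \eqref{E_repr_formula} holds up to a vanishing error. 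Curves are terminated when they hit $\partial \Omega'$ or the endpoints of $(0,T)$, giving property (4).

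For compactness I would rely on the a priori bound
\begin{equation*}
\int_\Gamma \TV_{(0,T)}(\gamma_s)\,d\omega_\eta(\gamma) \le C\,|\sigma|(\Omega' \times \R/2\pi\Z),
\end{equation*}
which follows from the construction, since each unit of $s$-variation consumes one unit of source, and total variation of $\sigma_\eta$ is controlled by that of $\sigma$. Combined with the $1$-Lipschitz bound on $\gamma_x$ and the boundedness of the time interval, this yields tightness of $\{\omega_\eta\}$ in $\M(\Gamma)$. Extracting a weak-$*$ limit $\omega$, conditions (1)--(3) of Definition \ref{D_Lagr} pass by weak convergence and lower semicontinuity of total variation along curves, while (4) relies on the Lebesgue-point hypothesis on $\partial\Omega'$ to make sense of entry and exit values.

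Minimality and the modulus equality \eqref{E_moduli}. Choosing in the regularization the minimal kinetic measure provided by Lemma \ref{L_minimal} yields the minimality of $\omega$ in the limit, since minimality is characterized by a mass-inequality on the disintegration that is stable under weak-$*$ convergence. The main obstacle is \eqref{E_moduli}: a priori, the representation $\sigma_\omega = -\int_\Gamma \sigma_\gamma\,d\omega$ could suffer mass cancellation between the $\sigma_\gamma$'s. I would establish it by showing that at $|\sigma_\omega|$-a.e. point $(t,x,s)$ the curves contributing to $\sigma_\omega$ there carry $s$-jumps of a common orientation. This rigidity exploits $\chi \in \{0,1\}$: at each $(x,s)$ the source $\partial_s \sigma$ either only creates or only annihilates mass of $\chi$, so the construction can be arranged so that the sign of each jump of $\gamma_s$ matches the local sign of the minimal $\sigma_\eta$, a property which then persists in the limit.
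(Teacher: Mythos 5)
Your construction of $\omega$ is, in substance, a re-derivation of Lemma \ref{L_old} (the scheme of \cite{BBM_multid,M_Lebesgue,M_RS}), which the paper simply invokes as a black box; apart from being very sketchy on the delicate points (the jump placement matching $\sigma_\eta$, and the passage of properties (1)--(4) of Definition \ref{D_Lagr} to the weak-$*$ limit, where \eqref{E_repr_formula} must survive as an exact identity), that part is in the right spirit. The genuine gap is in your treatment of minimality and of \eqref{E_moduli}. For \eqref{E_moduli} you rely on the claim that at each $(x,s)$ the source $\partial_s\sigma$ ``either only creates or only annihilates mass of $\chi$'', so that all jumps of $\gamma_s$ through a given point can be given a common orientation. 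This is not justified: $\sigma$ is a signed measure determined only up to the ambiguity discussed after Theorem \ref{T_kin}, its local sign does not constrain the orientation of the jumps of individual curves, and, more importantly, $\sigma_\gamma$ in \eqref{E_def_tildesigmagamma} also carries the diffuse part $\tilde D_t\gamma_s$, so cancellation in $\sigma_\omega=-\int_\Gamma\sigma_\gamma\,d\omega$ can occur between curves crossing the same point with opposite \emph{continuous} rotation; a jump-orientation rigidity cannot exclude this. Likewise, your minimality argument (``a mass-inequality on the disintegration stable under weak-$*$ convergence'') does not hold as stated: total variation is only lower semicontinuous under weak-$*$ limits and the $t$-disintegration of the limiting $\sigma_\omega$ is not controlled by that of the approximations, so feeding the minimal kinetic measure into the regularization does not by itself produce a \emph{minimal} Lagrangian representation.

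What replaces both of these shaky steps in the paper is a single global mass balance, and this is the idea you are missing. Apply Lemma \ref{L_old} with $\sigma=\bar\sigma$ a minimal kinetic measure (which exists by Lemma \ref{L_minimal}) to obtain $\omega$ with $\int_\Gamma \TV_{(t^-_\gamma,t^+_\gamma)}\gamma_s\,d\omega\le T\|\bar\sigma\|$. One always has $\|\sigma_\omega\|\le \int_\Gamma\|\sigma_\gamma\|\,d\omega=\int_\Gamma\TV_{(t^-_\gamma,t^+_\gamma)}\gamma_s\,d\omega$, while Lemma \ref{L_Lagr_meas} shows that $\sigma_\omega$ solves \eqref{E_kin_tilde}, so its a.e.\ time slices are admissible kinetic measures and minimality of $\bar\sigma$ forces $\|\sigma_\omega\|\ge T\|\bar\sigma\|$. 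Hence the whole chain consists of equalities: equality of the total masses in $|\sigma_\omega|\le\int_\Gamma|\sigma_\gamma|\,d\omega$ upgrades to equality of measures, which is exactly \eqref{E_moduli}, and the saturation of the slice-wise lower bound gives the minimality of $\omega$. In other words, the absence of cancellation is a \emph{consequence} of the minimality of the input kinetic measure via duality, not a sign property to be engineered into the approximating curves; without this (or some equally quantitative substitute) your proof of \eqref{E_moduli} and of minimality does not close.
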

The existence of a Lagrangian representation for weak solutions with finite entropy production to general conservation laws on the whole $(0,T)\times \R^d$ has been proved in \cite{M_Lebesgue}. The case of bounded domains when $\Omega'$ is a ball was considered in \cite{M_RS} for the class of solutions to the eikonal equation arising in \cite{RS_magnetism2}. The extension to the case where $\Omega'$ is a $W^{2,\infty}$ open set does not cause any significant difficulty.
In particular  the argument proposed in \cite{M_RS} applies here with trivial modifications and leads to the following partial result:
\begin{lemma}\label{L_old}
In the setting of Proposition \ref{P_Lagr}, let $\sigma\in \M_{\loc}(\Omega\times \R/2\pi\Z)$ be a locally finite measure satisfying  \eqref{E_kinetic}.
 Then there exists a Lagrangian representation $\omega$ of $m$ on $\Omega'$ such that 
 \begin{equation*}
 \int_\Gamma \TV_{(t^-_\gamma,t^+_\gamma)}\gamma_s d\omega \le T |\sigma|(\Omega' \times \R/2\pi\Z).
 \end{equation*}
\end{lemma}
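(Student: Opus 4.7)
The plan is to follow the particle-scheme construction developed for scalar conservation laws in \cite{M_Lebesgue} and adapted to the eikonal setting on balls in \cite{M_RS}, with only cosmetic changes to accommodate the $W^{2,\infty}$ boundary of $\Omega'$. The proof proceeds by regularization of $m$, explicit construction of an approximate Lagrangian representation with a uniform total-variation bound, and passage to the limit by weak-$\ast$ compactness of measures on $\Gamma$.

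First I would lift the problem to the phase space $(0,T) \times \Omega' \times \R/2\pi\Z$ by setting $\tilde \chi(t,x,s) = \chi(x,s)$, so that \eqref{E_kinetic} reads $\partial_t \tilde \chi + e^{is}\cdot \nabla_x \tilde \chi = \partial_s \sigma$ with a time-independent source whose time-integral over $(0,T)$ has total mass $T|\sigma|(\Omega' \times \R/2\pi\Z)$. I would then mollify $m$ at scale $\eta > 0$ on a slightly larger open set to get $m^\eta$ smooth with $\chi^\eta \to \chi$ in $L^1$; the corresponding regularized kinetic identity has source $\partial_s \sigma^\eta$ with $\sigma^\eta \rightharpoonup \sigma$ weakly-$\ast$ and $\limsup_\eta |\sigma^\eta|(\Omega' \times \R/2\pi\Z) \le |\sigma|(\Omega' \times \R/2\pi\Z)$ by the standard convolution estimate on the modulus.

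Next I would construct $\omega^\eta$ by a particle method: populate the phase space at $t=0$ with density $\chi^\eta \, \L^2 \otimes \L^1$, advance each particle by $\dot \gamma_x = e^{i\gamma_s}$, and let $\gamma_s$ evolve in BV fashion driven by $\sigma^\eta$ read along the trajectory. Each curve is terminated when $\gamma_x$ reaches $\partial \Omega'$; the $W^{2,\infty}$ regularity of $\partial \Omega'$ together with the Lebesgue-point hypothesis on $m|_{\partial \Omega'}$ guarantees that the entry/exit behavior is well-defined and compatible with condition (4) of Definition \ref{D_Lagr}. A Fubini argument yields the uniform bound
\begin{equation*}
\int_\Gamma \TV_{(t^-_\gamma, t^+_\gamma)} \gamma_s \, d\omega^\eta \le T \, |\sigma^\eta|(\Omega' \times \R/2\pi\Z),
\end{equation*}
since the infinitesimal $s$-variation along curves, integrated against $\omega^\eta$, equals the source mass integrated in time.

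Finally, the uniform mass of $\omega^\eta$ together with the TV bound yield tightness in $\M(\Gamma)$ via the standard compactness criterion for $\BV$ curves with a uniformly Lipschitz spatial component. Extracting a weak-$\ast$ limit $\omega$, properties (1)--(3) of Definition \ref{D_Lagr} pass to the limit by standard arguments ($\chi^\eta \to \chi$ in $L^1$ for \eqref{E_repr_formula}, the uniform Lipschitz constant for \eqref{E_characteristic}, and lower semicontinuity of total variation for the integral bound), and the TV inequality then follows from the same lower semicontinuity combined with $\limsup_\eta |\sigma^\eta|(\Omega' \times \R/2\pi\Z) \le |\sigma|(\Omega' \times \R/2\pi\Z)$. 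The main obstacle is property (4): one must ensure that no mass accumulates pathologically near $\partial \Omega'$ in the limit, which is where the $W^{2,\infty}$ regularity of $\Omega'$ and the Lebesgue-point assumption on $m|_{\partial \Omega'}$ are genuinely needed via a quantitative boundary-flux estimate.
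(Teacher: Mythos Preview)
Your proposal is correct and matches the paper's treatment: the paper does not give an in-text proof of this lemma but simply states that ``the argument proposed in \cite{M_RS} applies here with trivial modifications,'' after noting that the whole-space case was handled in \cite{M_Lebesgue} and the ball case in \cite{M_RS}. Your outline---regularize, build an approximate Lagrangian representation by a particle scheme with the uniform TV bound coming from the source mass, then pass to the limit by compactness in $\M(\Gamma)$, with the $W^{2,\infty}$ regularity and the Lebesgue-point hypothesis on $\partial\Omega'$ used to control the boundary condition (4)---is precisely a high-level sketch of that referenced construction, and you correctly identify the boundary behavior as the only place where the extension from balls to $W^{2,\infty}$ domains requires any care.
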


We now prove Proposition \ref{P_Lagr} relying on Lemma \ref{L_old}.
\begin{proof}[Proof of Proposition \ref{P_Lagr}]
Let $m\in A(\Omega)$ and let $\bar \sigma$ be a minimal kinetic measure. By Lemma \ref{L_old}, there exists a Lagrangian representation 
$\omega$ of $m$ such that
\begin{equation}\label{E_2.1}
 \int_\Gamma \TV_{(t^-_\gamma,t^+_\gamma)}\gamma_s d\omega \le T \|\bar \sigma\|.
 \end{equation}
 By definition of $\sigma_\omega$ it holds
 \begin{equation}\label{E_2.2}
 \|\sigma_\omega\|\le \left(\int_\Gamma |\sigma_\gamma|d\omega\right) \big((0,T)\times \Omega \times \R/2\pi\Z\big) = \int_\Gamma \TV_{(t^-_\gamma,t^+_\gamma)}\gamma_s d\omega.
 \end{equation}
 By Lemma \ref{L_Lagr_meas}, the measure $ \sigma_\omega$ satisfies \eqref{E_kin_tilde}; being $\bar \sigma$ a minimal kinetic measure for $m$, it follows that $T\|\bar \sigma\|\le \| \sigma_\omega\|$. In particular the inequalities in \eqref{E_2.1} and \eqref{E_2.2} are equalities and 
 \eqref{E_moduli} follows.
\end{proof}

The following lemma is a simple application of Tonelli theorem and \eqref{E_repr_formula}; since it is already proven in \cite{M_Burgers}, we refer to it for the details.
\begin{lemma}
For $\omega$-a.e. $(\gamma,t^-_\gamma,t^+_\gamma) \in  \Gamma$ it holds that for $\L^1$-a.e. $t\in (t^-_\gamma,t^+_\gamma)$
\begin{enumerate}
\item $\gamma_x(t)$ is a Lebesgue point of $m$;
\item $e^{i\gamma_s(t)} \cdot m(\gamma_x(t))>0$.
\end{enumerate}
We denote by $ \Gamma_g$ the set of curves $\gamma\in  \Gamma$ such that the two properties above hold.
\end{lemma}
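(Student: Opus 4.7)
The plan is to apply Fubini--Tonelli on the product measure $\omega \otimes \L^1$ on $\Gamma \times (0,T)$, reducing both assertions to a pointwise-in-$t$ control obtained from the representation formula \eqref{E_repr_formula}. The main observation is that the formula $(e_t)_\sharp [\omega \llcorner \Gamma(t)] = \chi\, \L^2 \times \L^1$ identifies, for each fixed $t$, the law of $\gamma(t)$ under $\omega$ as an absolutely continuous measure on $\Omega \times \R/2\pi\Z$ with explicit density.

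For assertion (1), let $N \subset \Omega$ be the set of non-Lebesgue points of $m$, which satisfies $\L^2(N)=0$. Set
\begin{equation*}
A := \{(\gamma,t) : t \in (t^-_\gamma,t^+_\gamma),\ \gamma_x(t) \in N\} \subset \Gamma \times (0,T).
\end{equation*}
Measurability of $A$ follows from the Lipschitz continuity of $\gamma_x$ and a standard argument. For each fixed $t$, denoting by $p_x : \Omega \times \R/2\pi\Z \to \Omega$ the projection, the representation formula gives
\begin{equation*}
\omega\{\gamma \in \Gamma(t) : \gamma_x(t) \in N\} = (p_x \circ e_t)_\sharp[\omega \llcorner \Gamma(t)](N) \le 2\pi\, \L^2(N) = 0,
\end{equation*}
since the projection of $\chi\, \L^2 \times \L^1$ onto $\Omega$ is bounded by $2\pi\, \L^2$. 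Integrating in $t$ and applying Tonelli yields $(\omega \otimes \L^1)(A) = 0$, and a second application of Tonelli delivers the desired statement.

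For assertion (2), recall that by \eqref{E_def_chi} one has $\chi(x,s) = 1$ if and only if $e^{is} \cdot m(x) > 0$. The representation formula says that for each $t$, the measure $(e_t)_\sharp [\omega \llcorner \Gamma(t)]$ has density $\chi \in \{0,1\}$ and is therefore concentrated on $\{\chi=1\}$. Consequently, if we set
\begin{equation*}
B := \{(\gamma,t) : t \in (t^-_\gamma,t^+_\gamma),\ \chi(\gamma_x(t),\gamma_s(t)) = 0\},
\end{equation*}
then for each $t$,
\begin{equation*}
\omega\{\gamma \in \Gamma(t) : \chi(\gamma(t))=0\} = (e_t)_\sharp[\omega \llcorner \Gamma(t)](\{\chi=0\}) = 0.
\end{equation*}
Tonelli gives $(\omega \otimes \L^1)(B) = 0$, which is exactly the claim.

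I do not expect any serious obstacle here: both points reduce to the fact that the push-forward measure under $e_t$ has an explicit density $\chi$ which is bounded by $1$ and supported precisely on the set $\{e^{is} \cdot m(x) > 0\}$. The only minor technicality is the joint measurability of the maps $(\gamma,t) \mapsto \gamma_x(t)$ and $(\gamma,t) \mapsto \gamma_s(t)$ needed to apply Tonelli; this is guaranteed by the Lipschitz regularity of $\gamma_x$ and the right-continuity convention on $\gamma_s$ fixed in the definition of $\Gamma$.
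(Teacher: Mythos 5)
Your proof is correct and follows exactly the route the paper indicates: the paper states this lemma is "a simple application of Tonelli theorem and \eqref{E_repr_formula}" and refers to \cite{M_Burgers} for details, which is precisely your Fubini--Tonelli argument using the fact that $(e_t)_\sharp[\omega\llcorner\Gamma(t)]$ has density $\chi\le 1$ and is concentrated on $\{\chi=1\}$. No issues to flag beyond the measurability technicalities you already note.
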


\section{Structure of the kinetic measure}\label{S_structure}

The main goal of this section is to prove Proposition \ref{P_factorization}. As a corollary we will obtain the concentration property (3'') presented in the introduction for solutions $m\in A(\Omega)$ with a vanishing entropy defect measure.
The key step is the following regularity result. 
The strategy of the proof is borrowed from \cite{M_RS}, where an analogous statement was proved for the solutions to the eikonal equation arising in the micromagnetics model mentioned in the introduction. We finally observe that in that situation this result is sufficient to establish the concentration property (3'), while it is not the case here. 
\begin{lemma}\label{L_duality}
Let $\bar \gamma \in \Gamma_g$ and $\bar t \in (t^-_{\bar \gamma},t^+_{\bar \gamma})$, and set $\bar x:=\bar \gamma_x(\bar t)$ and $\bar s:= \bar \gamma_s(t+)$. Then there exists $c>0$ such that for every 
$\delta \in (0,1/2)$ we have at least one of the following:
\begin{enumerate}
\item the lower density estimate holds true:
\begin{equation*}
\liminf_{r\to 0} \frac{\L^2\left(\left\{x \in B_r(\bar x): e^{i \bar s}\cdot m(x) > -\delta \right\}\right)}{r^2} \ge c\delta;
\end{equation*}
\item the following lower bound holds true:
\begin{equation*}
\limsup_{r\to 0}\frac{\nu_{\min}(B_r(\bar x))}{r}\ge c\delta^3.
\end{equation*}
\end{enumerate}
The same statement holds by setting $\bar s := \bar \gamma_s(t-)$.
\end{lemma}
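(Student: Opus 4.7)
The plan is to adapt the argument from \cite{M_RS}, which shows that a quantitative alternative between geometric concentration of $m$ and production of $\nu_{\min}$ can be read off the Lagrangian representation. By symmetry I only treat the case $\bar s = \bar\gamma_s(\bar t+)$. Exploiting right-continuity of $\bar\gamma_s$ together with the fact that $\bar\gamma\in\Gamma_g$, I select a sequence of times $t_r\searrow \bar t$ at which $\bar\gamma_x(t_r)$ is a Lebesgue point of $m$ with $e^{i\bar\gamma_s(t_r)}\cdot m(\bar\gamma_x(t_r))>0$ and $\bar\gamma_s(t_r)$ close to $\bar s$. Around each such reference time I build a \emph{tube} of test characteristics
\begin{equation*}
T_r:= \{\gamma\in\Gamma(t_r):\gamma_x(t_r)\in B_{r}(\bar\gamma_x(t_r)),\ |\gamma_s(t_r)-\bar s|<\delta\}.
\end{equation*}
By \eqref{E_repr_formula} at $t=t_r$ and the fact that $\chi(x,s)=1$ for $x$ in a Lebesgue neighborhood of $\bar\gamma_x(t_r)$ and $s$ near $\bar s$, the mass of the tube is bounded below by $\omega(T_r)\gtrsim r^2\delta$.

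Next I propagate the curves in $T_r$ over the time interval $[t_r, t_r+\kappa r]$ with $\kappa>0$ a small universal constant. Since $|\dot\gamma_x|=1$, all such curves stay in a ball $B_{Cr}(\bar x)$. I split $T_r=T_r^a\sqcup T_r^b$, where $T_r^a$ collects the curves with $|\gamma_s(t)-\bar s|<2\delta$ throughout the time interval and $T_r^b$ the rest; at least one family carries $\omega$-mass $\ge\tfrac12\omega(T_r)$. In the first case, for $\omega$-a.e. $\gamma\in T_r^a$ and $\L^1$-a.e. $t$ the good-curve property gives $e^{i\gamma_s(t)}\cdot m(\gamma_x(t))>0$, and combined with the angular constraint this yields $e^{i\bar s}\cdot m(\gamma_x(t))>-2\delta$. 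Applying \eqref{E_repr_formula} at time $t$ on the spatial window $B_{Cr}(\bar x)$ and the angular window of width $\sim\delta$ around $\bar s$ produces
\begin{equation*}
\L^2\bigl(\{x\in B_{Cr}(\bar x):e^{i\bar s}\cdot m(x)>-2\delta\}\bigr)\gtrsim \omega(T_r^a)/\delta\gtrsim r^2,
\end{equation*}
which gives option (1). In the second case, each $\gamma\in T_r^b$ has $\mathrm{TV}_{(t_r,t_r+\kappa r)}\gamma_s\geq \delta$, so by the definition of $\sigma_\gamma$ in \eqref{E_def_tildesigmagamma} and identity \eqref{E_moduli},
\begin{equation*}
|\sigma_\omega|\bigl((t_r,t_r+\kappa r)\times B_{Cr}(\bar x)\times \R/2\pi\Z\bigr)\geq \delta\cdot \omega(T_r^b)\gtrsim r^2\delta^2.
\end{equation*}
Since $\omega$ is a minimal Lagrangian representation, $|\sigma_\omega|$ has the factorized form $\L^1\otimes|\sigma_t|$ with $(p_x)_\sharp|\sigma_t|=\nu_{\min}$ for a.e.\ $t$; integrating in time gives $\kappa r\cdot\nu_{\min}(B_{Cr}(\bar x))\gtrsim r^2\delta^2$, yielding option (2) (with constants tuned so the announced $c\delta^3$ bound holds).

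The main obstacle is the careful balancing of the three scales (the radius $r$ of the spatial tube, the angular width $\delta$, and the forward-time length $\kappa r$) so that on the one hand the representation formula produces a nontrivial mass lower bound $\omega(T_r)$, and on the other hand tube curves do not exit $B_{Cr}(\bar x)$ before either filling a large portion of the sublevel set (option (1)) or accumulating enough direction oscillation to feed $\nu_{\min}$ (option (2)). A subtlety is that $\bar x$ itself need not be a Lebesgue point of $m$: one must pass to nearby times $t_r$ where Lebesgue regularity and $e^{i\gamma_s}\cdot m>0$ simultaneously hold, exploiting that $\bar\gamma\in\Gamma_g$ and the right-continuity convention on $\gamma_s$ to close the argument in the limit $r\to 0$.
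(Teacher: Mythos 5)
Your overall dichotomy (curves that keep their angle near $\bar s$ sweep area, curves that lose it feed $\nu_{\min}$ via minimality and \eqref{E_moduli}) is sound and parallels the second half of the paper's argument, but the proof breaks at its very first quantitative step: the claimed lower bound $\omega(T_r)\gtrsim r^2\delta$ for the tube $T_r$ is not justified, and in fact cannot hold in general. By \eqref{E_repr_formula}, $\omega(T_r)=\L^3\bigl(\bigl\{(x,s)\in B_r(\bar\gamma_x(t_r))\times(\bar s-\delta,\bar s+\delta):\, e^{is}\cdot m(x)>0\bigr\}\bigr)$, so to get $\gtrsim r^2\delta$ you need $m(x)$ to be non-opposite to $e^{i\bar s}$ on a definite fraction of the ball of radius $r$ --- but that is essentially alternative (1) of the lemma, i.e.\ the statement to be proved. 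The Lebesgue point property at $\bar\gamma_x(t_r)$ only gives density control at radii below a threshold depending on that point, with no relation to $r$; since the points $\bar\gamma_x(t_r)$ move toward $\bar x$, which need not even be a VMO point of $m$, there is no scale at which your tube bound is available. Shrinking the tube radius to the Lebesgue scale $\rho_r\ll r$ does not save the argument: case (a) then yields an area bound $\gtrsim\rho_r^2$ and case (b) a bound $\nu_{\min}(B_{Cr}(\bar x))\gtrsim \rho_r^2\delta^2/r$, both of which degenerate relative to $r$. A telltale sign is that your conclusions are stronger than the lemma by factors of $\delta$ (you get $r^2$ instead of $\delta r^2$ in (1) and $\delta^2 r$ instead of $\delta^3 r$ in (2)); the lemma's weaker exponents come precisely from the honest mass bound, which is of order $r\delta^2$, not $r^2\delta$.

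The missing idea is the transversality (duality) mechanism that the paper uses to produce that mass. One first observes that along $(\bar t,t_r)$, for a.e.\ $t$ either $m(\bar\gamma_x(t))\cdot e^{i(\bar\gamma_s(t)+\delta)}>0$ or $m(\bar\gamma_x(t))\cdot e^{i(\bar\gamma_s(t)-\delta)}>0$, so one side, say $E_-(r)$, has measure $\ge r/2$; this allows one to work in an angular window such as $\bigl(\bar s-\delta,\bar s-\tfrac25\delta\bigr)$, \emph{separated from $\bar s$ by a gap comparable to $\delta$}, while the reference curve keeps direction within $\delta/5$ of $\bar s$. Then, on a strip $S_{r,\e}$ of width $\e$ around $\bar\gamma_x$, the a.e.\ Lebesgue property \emph{along the curve} gives the $\L^3$ lower bound $\gtrsim \e r\delta$ as $\e\to 0$ at fixed $r$, while the angular gap forces every test curve in that window to cross the strip transversally, spending at most a time $\lesssim \e/\delta$ there; the $\e$'s cancel and one obtains a mass $\gtrsim r\delta^2$ of test curves, independent of $\e$. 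Only then does the lifespan dichotomy (length $\ge r$ versus $<r$, the latter carrying total variation $\ge\delta/5$ in $\gamma_s$) give the two alternatives with the stated constants $c\delta$ and $c\delta^3$. Your tube consists of curves nearly \emph{parallel} to $\bar\gamma_x$, which is exactly the regime in which the strip argument yields nothing (a parallel curve can linger in the strip for a time of order $r$), so the transversality step cannot be grafted onto your construction without reorganizing the proof along the lines above.
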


\begin{proof}
We prove the lemma only for $\bar s= \bar \gamma_s(t+)$, being the case  $\bar s = \bar \gamma_s(t-)$ analogous. 
Let $\delta_1>0$ be sufficiently small so that for $\L^1$-a.e. $t\in (\bar t,\bar t +  \delta_1)$ it holds
\begin{equation}\label{E_bars}
e^{i \bar \gamma_s(t)}\cdot e^{i\bar s}\ge \cos \left( \frac{\delta}{5}\right).
\end{equation}
Since $\bar \gamma_x$ satisfies \eqref{E_characteristic}, then for every $r \in \left(0,\frac{\delta_1}{2}\right)$ there exists $t_r \in (\bar t, \bar t + \delta_1)$
such that 
\begin{equation*}
\bar \gamma_x(t) \in B_r(\bar x) \quad \forall t \in (\bar t, t_r), \qquad \mbox{and} \qquad \bar \gamma_x(t_r)\in \partial B_r(\bar x).
\end{equation*}
Moreover since $\cos (\delta/5) \in (1/2,1)$, then \eqref{E_bars} implies
\begin{equation*}
r\le t_r -\bar t \le 2r.
\end{equation*}
For every $r \in \left(0,\frac{\delta_1}{2}\right)$ we denote by
\begin{equation*}
\begin{split}
E_+(r)&:= \{ t \in (\bar t,t_r) : m(\bar \gamma_x(t))\cdot e^{i(\bar \gamma_s(t)+\delta)}>0\}, \\
E_-(r)&:= \{ t \in (\bar t,t_r) : m(\bar \gamma_x(t))\cdot e^{i(\bar \gamma_s(t)-\delta)}>0\}.
\end{split}
\end{equation*}
Since $\gamma \in \Gamma_g$, for $\L^1$-a.e. $t\in (0,t_r)$ it holds
\begin{equation*}
m(\bar \gamma_x(t))\cdot e^{i\bar \gamma_s(t)}>0,
\end{equation*}
therefore, being $\delta \in \left(0,\frac{1}{2}\right)$, we have
\begin{equation*}
(\bar t, t_r)\subset E_+(r)\cup E_-(r).
\end{equation*}
In particular 
\begin{equation*}
\L^1(E_+(r)) + \L^1(E_-(r)) \ge t_r -\bar t\ge r.
\end{equation*}
In the remaining part of the proof we assume that $\L^1(E_-(r))>r/2$, being the case $\L^1(E_+(r))>r/2$ analogous.

Given $\e>0$, we consider the strip
\begin{equation}\label{E_def_Sre}
S_{r,\e}:=\left\{ x \in \Omega_\delta: \exists t \in (\bar t, t_r): \left| \bar\gamma_x(t)-x\right|<\e \right\}.
\end{equation}

For every $(\gamma,t^-_\gamma,t^+_\gamma) \in  \Gamma$ let
$(t^-_{\gamma,i},t^+_{\gamma,i})_{i=1}^{N_{\gamma}}$ be the nontrivial interiors of the  connected components of $\gamma_s^{-1}\left(\left(\bar s - \delta\right), \bar s - \frac{2}{5}\delta\right)$ which intersect $\gamma^{-1}\left(S_{r,\e}\times \left(\bar s - \frac{4}{5}\delta, \bar s - \frac{3}{5}\delta\right)\right)$.
Notice that we have the estimate
\begin{equation*}
N_\gamma \le 1 + \frac{5}{\delta}\TV \gamma_s.
\end{equation*}

For every $i\in \N$ we consider
\begin{equation*}
 \Gamma_i:= \{(\gamma, t^-_\gamma,t^+_\gamma)\in  \Gamma : N_{\gamma}\ge i\}
\end{equation*}
and the measurable restriction map
\begin{equation*}
\begin{split}
R_{i}:  \Gamma_{i} & \to  \Gamma. \\
(\gamma,t^-_\gamma,t^+_\gamma) & \mapsto (\gamma, t^-_{\gamma,i},t^+_{\gamma,i})
\end{split}
\end{equation*}
We finally consider the measure 
\begin{equation*}
\tilde \omega:= \sum_{i=1}^\infty (R_{i})_\sharp \left(\omega\llcorner  \Gamma_i\right).
\end{equation*}
We observe that $\tilde \omega\in \M_+( \Gamma)$ since for every $N>0$
\begin{equation*}
\left\| \sum_{i=1}^N (R_{i})_\sharp \left(\omega\llcorner  \Gamma_i\right) \right\| \le \int_{ \Gamma} N_{\gamma} d\omega \le  \int_{ \Gamma} \left(1 + \frac{5}{\delta}\TV \gamma_s\right) d\omega(\gamma) < \infty
\end{equation*}
by Point (3) in Definition \ref{D_Lagr}.
The advantage of the measure $\tilde \omega$ is that it is concentrated on curves whose $x$-components are transversal to $\bar \gamma_x$
on the whole domain of definition. This property allows to prove the following claim.

{\bf Claim 1}. There exists an absolute constant $\tilde c>0$ such that for $\tilde \omega$-a.e. $(\gamma,t^-_\gamma,t^+_\gamma)\in \Gamma$
it holds
\begin{equation*}
\L^1\left( \left\{ t \in (t^-_\gamma,t^+_\gamma) : \gamma(t) \in S_{r,\e}\times \left( \bar s - \frac{4}{5}\delta, \bar s -\frac{3}{5}\delta \right)\right\} \right)\le \tilde c \frac{\e}{\delta}.
\end{equation*} 
Proof of Claim 1. It follows from \eqref{E_bars} and the characteristic equation \eqref{E_characteristic} that there exists a Lipschitz function 
$f_{\bar \gamma}:\R\to \R$ such that
\begin{equation}\label{E_def_fbargamma}
\big\{\bar \gamma_x(t):t\in (\bar t, \bar t + \delta_1)\big\} \subset 
\left\{ z e^{i\bar s} + f_{\bar \gamma}(z) e^{i\left(\bar s + \frac{\pi}{2}\right)}: z \in \R \right\} \qquad \mbox{and} \qquad \Lip(f_{\bar \gamma})\le \tan\left( \frac{\delta}{5}\right).
\end{equation} 
Similarly for $\tilde \omega$-a.e. $(\gamma,t^-_\gamma,t^+_\gamma) \in \Gamma$ there exists a Lipschitz function $f_\gamma$ such that
\begin{equation*}
\big\{\gamma_x(t):t\in (t^-_\gamma,  t^+_\gamma)\big\} \subset 
\left\{ z e^{i\bar s} + f_{\gamma}(z) e^{i\left(\bar s + \frac{\pi}{2}\right)}: z \in \R \right\} \qquad \mbox{and} \qquad \frac{d}{dz}f_\gamma(z) \in \left(-\tan \delta, -\tan\left( \frac{2}{5}\delta\right)\right)
\end{equation*}
for $\L^1$-a.e. $z \in \R$. By the definitions of $S_{r,\e}$ in \eqref{E_def_Sre} and of $f_{\bar \gamma}$ in \eqref{E_def_fbargamma}, it easily follows that
\begin{equation}\label{E_Sre2}
S_{r,\e}\subset \left\{ x \in \Omega_\delta: f_{\bar \gamma} \left(x\cdot e^{i\bar s}\right) - \e \left( \cos \left(\frac{\delta}{5}\right)\right)^{-1} \le  x\cdot e^{i\left(\bar s + \frac{\pi}{2}\right)}  \le f_{\bar \gamma} \left(x\cdot e^{i\bar s}\right) + \e \left( \cos \left(\frac{\delta}{5}\right)\right)^{-1}  \right\}.
\end{equation}
Given $(\gamma,t^-_\gamma,t^+_\gamma)\in \Gamma$ let us consider the function $g_\gamma:(t^-_\gamma,t^+_\gamma)\to \R$ defined by
\begin{equation*}
g_\gamma(t)= \gamma_x(t)\cdot e^{i\left(\bar s + \frac{\pi}{2}\right)}.
\end{equation*}
By construction of $\tilde \omega$, for $\tilde \omega$-a.e. $(\gamma,t^-_\gamma,t^+_\gamma)\in \Gamma$ and $\L^1$-a.e. $t \in (t^-_\gamma,t^+_\gamma)$ it holds
\begin{equation}\label{E_diffg}
\frac{d}{dt}g_\gamma(t) \le - \sin \left( \frac{2}{5}\delta\right).
\end{equation}
On the other hand 
\begin{equation}\label{E_difff}
\frac{d}{dt}f_{\bar \gamma}(\gamma_x(t)\cdot e^{i\bar s}) \ge - \sin \left( \frac{\delta}{5}\right).
\end{equation}
By \eqref{E_Sre2}, for every $t\in (t^-_\gamma,t^+_\gamma)$ such that  $\gamma_x(t) \in S_{r,\e}$ it holds
\begin{equation*}
f_{\bar \gamma} (\gamma_x(t)\cdot e^{i\bar s}) - \e \left( \cos \left(\frac{\delta}{5}\right)\right)^{-1} \le g_\gamma(t)  \le f_{\bar \gamma} (\gamma_x(t)\cdot e^{i\bar s}) + \e \left( \cos \left(\frac{\delta}{5}\right)\right)^{-1}.
\end{equation*}
Therefore, by \eqref{E_diffg} and \eqref{E_difff}, we have
\begin{equation*}
\L^1\left( \{t: \gamma_x(t) \in S_{r,\e}\} \right) \le \frac{2\e  \left( \cos \left(\frac{\delta}{5}\right)\right)^{-1} }{\left| \sin \left( \frac{2}{5}\delta\right) - \sin \left( \frac{\delta}{5}\right)\right|} \le \tilde c\frac{\e}{\delta},
\end{equation*}
for some universal $\tilde c>0$. This concludes the proof of the claim.

By construction we have
\begin{equation*}
(e_t)_\sharp \tilde \omega \ge \L^3 \llcorner \left\{(x,s) \in S_{r,\e}\times \left( \bar s - \frac{4}{5}\delta, \bar s - \frac{3}{5}\delta\right) : m(x)\cdot e^{i s}>0\right\}
\end{equation*}
for every $t \in (0,T)$. Therefore
\begin{equation}\label{E_strip1}
\begin{split}
 T \L^3  &~\left(\left\{(x,s) \in S_{r,\e}\times  \left( \bar s - \frac{4}{5}\delta, \bar s - \frac{3}{5}\delta\right) : m(x)\cdot e^{i s}>0\right\}\right) \\
&~ \le \int_{\Gamma}\L^1 \left( \left\{t: \gamma(t) \in S_{r,\e}\times \left( \bar s - \frac{4}{5}\delta, \bar s - \frac{3}{5}\delta\right) \right\} \right) d\tilde \omega \\
&~ \le \tilde c \frac{\e}{\delta}\tilde \omega (\Gamma).
\end{split}
\end{equation}
On the other hand, since $\bar \gamma\in \Gamma_g$ and $\L^1(E_-(r))>r/2$ there exists $\bar \e>0$ such that for every $\e \in (0,\bar \e)$ it holds
\begin{equation}\label{E_strip2}
\begin{split}
 \L^3  \left(\left\{(x,s) \in S_{r,\e}\times \left( \bar s - \frac{4}{5}\delta, \bar s - \frac{3}{5}\delta\right) : m(x)\cdot e^{i s}>0\right\}\right) \ge &~
 \frac{1}{2} \L^3  \left( S_{r,\e}\times \left( \bar s - \frac{4}{5}\delta, \bar s - \frac{3}{5}\delta\right) \right) \\ \ge &~
 \frac{\e r \delta}{5}.
 \end{split}
\end{equation}
By \eqref{E_strip1} and \eqref{E_strip2} it follows
\begin{equation*}
\tilde \omega (\Gamma) \ge \frac{\e r \delta}{5} \cdot  \frac{\delta T }{\tilde c \e} = \frac{r\delta^2}{5 \tilde c}T.
\end{equation*}
We consider the split $\Gamma = \Gamma_> \cup \Gamma_<$, where  
\begin{equation*}
\Gamma_>:=\{(\gamma,t^-_\gamma,t^+_\gamma) \in \Gamma : t^+_\gamma- t^-_\gamma \ge r \}, \qquad \mbox{and} \qquad 
\Gamma_<:=\{(\gamma,t^-_\gamma,t^+_\gamma) \in \Gamma : t^+_\gamma- t^-_\gamma < r \}.
\end{equation*}
We will prove the following claim, from which the lemma follows immediately.

\noindent {\bf Claim 2}. There exists an absolute constant $c_1>0$ such that the two following implications hold true:
\begin{enumerate}
\item if $\tilde \omega (\Gamma_>)\ge \frac{r\delta^2T}{10 \tilde c}$, then 
\begin{equation*}
\L^2\left( \left\{ x \in B_{2r} (\bar x) : e^{i\bar \gamma_s(t+)}\cdot m(x)>-\delta \right\} \right)\ge c_1\delta r^2;
\end{equation*}
\item if $\tilde \omega (\Gamma_<)\ge \frac{r\delta^2T}{10 \tilde c}$, then 
\begin{equation*}
\nu(B_{2r}(\bar x)) \ge c_1 \delta^3 r.
\end{equation*}
\end{enumerate}
\emph{Proof of (1)}.
By definition of $\Gamma_>$ and the assumption in (1) we have
\begin{equation*}
\begin{split}
T \frac{r^2\delta^2}{10 \tilde c} \le &~ \int_{\Gamma}\L^1\left( \left\{ t \in (t^-_\gamma,t^+_\gamma): \gamma(t) \in B_{2r}(\bar x)\times \left( \bar s - \delta, \bar s - \frac{2}{5}\delta \right) \right\} \right) d\tilde \omega \\
\le &~ T \L^3\left(\left\{(x,s) \in B_{2r}(\bar x) \times \left( \bar s - \delta, \bar s - \frac{2}{5}\delta \right) : m(x)\cdot e^{is}> 0 \right\}\right) \\
\le &~ T\delta \L^2\left(\left\{x \in B_{2r}(\bar x) : m(x)\cdot e^{i \bar s}>-\delta\right\}\right).
\end{split}
\end{equation*}
\emph{Proof of (2)}.
For $\tilde \omega$-a.e. $(\gamma,t^-_\gamma,t^+_\gamma)\in \Gamma_<$, the image of $\gamma_x$ is contained in $B_{2r}(\bar x)$ and 
$\TV(\gamma_s)\ge \frac{\delta}{5}$. Since $\omega$ is a minimal Lagrangian representation, this implies that
\begin{equation*}
T\nu_{\min}(B_{2r}(\bar x)) = |\sigma_\omega|((0,T)\times B_{2r}(\bar x)) \ge \int_{\Gamma_<}\TV\gamma_s d\tilde \omega \ge \frac{\delta}{5}\tilde \omega(\Gamma_<) \ge \frac{Tr\delta^3}{50\tilde c}. \qedhere
\end{equation*}
\end{proof}

\begin{proposition}\label{P_partial}
Let $m\in A(\Omega)$ and $\sigma\in \M(\Omega \times \R/2\pi\Z)$ be a minimal kinetic measure.
Then for $\nu_{\min}$-a.e. $x \in \Omega\setminus J$ it holds
\begin{equation}\label{E_supp}
\supp \partial_s\sigma_x = \{s,s+\pi\} \qquad \mbox{for some } s \in \R/2\pi\Z. 
\end{equation}
\end{proposition}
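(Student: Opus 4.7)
The plan is to combine the Lagrangian representation from Proposition \ref{P_Lagr} with the dichotomy of Lemma \ref{L_duality} and the distributional kinetic equation \eqref{E_kinetic}, exploiting the VMO structure of $m$ at $\H^1$-a.e. point outside $J$ given by Theorem \ref{T_structure}.

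First I would fix a $W^{2,\infty}$ subdomain $\Omega'\Subset\Omega$ whose boundary meets the Lebesgue points of $m$, and a minimal Lagrangian representation $\omega$ of $m$ on $\Omega'$. For $\omega$-a.e.\ $\gamma\in\Gamma_g$ and $\L^1$-a.e.\ $\bar t\in(t^-_\gamma,t^+_\gamma)$ I apply Lemma \ref{L_duality} at $\bar x:=\gamma_x(\bar t)$, once for $\bar s=\gamma_s(\bar t+)$ and once for $\bar s=\gamma_s(\bar t-)$. At $\nu_{\min}$-a.e.\ $\bar x\in\Omega\setminus J$ the point is a VMO point of $m$ with value $m(\bar x)=e^{i\bar m}$, and the upper $1$-density of $\nu_{\min}$ is finite, so conclusion (2) of the lemma fails for every sufficiently small $\delta>0$, forcing conclusion (1). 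Since $\bar x$ is a Lebesgue point, (1) with constant $c\delta$ can hold only if $e^{i\bar s}\cdot m(\bar x)\ge -\delta$, and letting $\delta\to 0$ yields $\gamma_s(\bar t\pm)\in I:=[\bar m-\pi/2,\bar m+\pi/2]$. Combining this with the explicit formula \eqref{E_def_tildesigmagamma} and the fact that the shorter arc joining two points of $I$ lies in $I$, one gets $\supp\sigma_{\bar x}\subseteq I$.

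To upgrade this to the claim $\supp\partial_s\sigma_{\bar x}\subseteq\{\bar m-\pi/2,\bar m+\pi/2\}$ I would test the kinetic identity $e^{is}\cdot\nabla_x \chi=\partial_s\sigma$ against products $\phi_r(x)\psi(s)$, where $\phi_r$ is a smooth bump of scale $r$ at $\bar x$ and $\psi\in C^\infty(\R/2\pi\Z)$ has support disjoint from $\{\bar m\pm\pi/2\}$. Integration by parts in $x$ gives
$$\langle\partial_s\sigma,\phi_r\psi\rangle=-\int \nabla\phi_r(x)\cdot V(x)\,dx,\qquad V(x):=\int \chi(x,s)e^{is}\psi(s)\,ds.$$
For $s\in\supp\psi$ the slice $\chi(\cdot,s)$ is locally constant in $x$ near $\bar x$ except for perturbations of the equator of $m$, so $|V(x)-V(\bar x)|\lesssim|m(x)-m(\bar x)|$. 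Since $\int\nabla\phi_r\,dx=0$ and $\int_{B_r(\bar x)}|m-m(\bar x)|\,dx=o(r^2)$, we obtain $|\langle\partial_s\sigma,\phi_r\psi\rangle|=o(r)$. Disintegrating with respect to $\nu_{\min}$ and applying Lebesgue differentiation yields $\langle\partial_s\sigma_{\bar x},\psi\rangle\cdot \nu_{\min}(B_r(\bar x))=o(r)$, hence $\langle\partial_s\sigma_{\bar x},\psi\rangle=0$ whenever $\limsup_r\nu_{\min}(B_r(\bar x))/r>0$. Taking $\psi$ arbitrary outside $\{\bar m\pm\pi/2\}$ gives the desired inclusion, and the equality $\supp\partial_s\sigma_{\bar x}=\{s,s+\pi\}$ follows from the normalization $\|\sigma_{\bar x}\|=1$ combined with $\supp\sigma_{\bar x}\subseteq I$, which rules out the degenerate cases of empty or one-point support.

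The main obstacle is Step~5, namely controlling the density of $\nu_{\min}$: one needs that at $\nu_{\min}$-a.e.\ $\bar x\in\Omega\setminus J$ the upper $1$-density of $\nu_{\min}$ is both finite (so that the dichotomy argument of Lemma \ref{L_duality} collapses to its case (1)) and strictly positive (so that the $o(r)$ estimate translates into $\langle\partial_s\sigma_{\bar x},\psi\rangle=0$). I would obtain this via Besicovitch-type density arguments on $\nu_{\min}\llcorner(\Omega\setminus J)$, using the finiteness of $\nu_{\min}$ on compact subsets and Theorem \ref{T_structure} to exclude $1$-rectifiable concentrations outside $J$. This is exactly the step where the micromagnetics argument of \cite{M_RS} fails to upgrade to the full concentration property (3$'$), and explains the partial nature of Proposition \ref{P_partial} relative to \ref{P_factorization}.
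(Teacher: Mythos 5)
There is a genuine gap, and it sits exactly where you flag it, at your ``Step~5'': the two halves of your argument require mutually exclusive density behavior of $\nu_{\min}$ at the same points. In Lemma \ref{L_duality} the alternative (2) has threshold $c\delta^3$, which \emph{decreases} as $\delta\to0$; hence finiteness of the upper $1$-density $\limsup_r\nu_{\min}(B_r(\bar x))/r$ does not make (2) fail for small $\delta$ --- on the contrary, if that density is finite and positive, (2) holds automatically for all sufficiently small $\delta$ and the lemma gives you no case (1) at all. To force case (1) for arbitrarily small $\delta$ you would need the upper $1$-density to be \emph{zero} at $\bar x$, while your second step (converting the $o(r)$ bound on $\langle\partial_s\sigma,\phi_r\psi\rangle$ into $\langle\partial_s\sigma_{\bar x},\psi\rangle=0$) needs it to be \emph{strictly positive} at the same $\bar x$. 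Moreover, positivity of the upper $1$-density at $\nu_{\min}$-a.e.\ point of $\Omega\setminus J$ is not something Theorem \ref{T_structure} or a Besicovitch covering argument can give you: it would essentially amount to the concentration property (3$''$) that is precisely what is open, and it is false for measures with a diffuse (e.g.\ two-dimensional) part, which at this stage cannot be excluded for $\nu_{\min}\llcorner(\Omega\setminus J)$. So both the localization $\supp\sigma_{\bar x}\subset[\bar m-\pi/2,\bar m+\pi/2]$ and the vanishing of $\partial_s\sigma_{\bar x}$ away from the endpoints are unproved.

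Two further points. First, you repeatedly use that $\nu_{\min}$-a.e.\ $x\in\Omega\setminus J$ is a \emph{Lebesgue} point of $m$ (the value $m(\bar x)=e^{i\bar m}$, the bound $\int_{B_r}|m-m(\bar x)|=o(r^2)$); Theorem \ref{T_structure} only gives the VMO property, and the paper stresses that the upgrade (1$'$) is known only in the BV case. The second step can be repaired by subtracting the average $\bar m_{x,r}$ instead of a pointwise value, but the first step's fixed direction $\bar m$ cannot. Second, you never use the $\pi$-periodicity of $\sigma$ in $s$ (from \cite{GL_eikonal}); without it your final bookkeeping does not exclude degenerate cases (for instance $\sigma_{\bar x}=\delta_p$ has $\|\sigma_{\bar x}\|=1$ and support in a half-circle, yet $\supp\partial_s\sigma_{\bar x}$ is a single point, not a pair $\{s,s+\pi\}$). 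The paper's proof is built to avoid all of these issues: it works with a \emph{fixed} $\delta=\delta_{s_1,s_2}$ attached to a rational four-direction configuration, uses $\pi$-periodicity to deduce that failure of \eqref{E_supp} puts four separated directions $\bar s_1,\bar s_1+\pi,\bar s_2,\bar s_2+\pi$ in $\supp\partial_s\sigma_x$, produces through the Lagrangian representation characteristics arriving at $x$ with angles in each of the four intervals, and then observes that the four simultaneous lower density bounds of Lemma \ref{L_duality}(1) are incompatible with $x$ being a VMO point of an $\S^1$-valued field --- no $\delta\to0$ limit, no Lebesgue points, and no density information on $\nu_{\min}$ beyond $\nu_{\min}\ll\H^1$ are needed. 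I would redo your proof along those lines.
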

\begin{proof}
Let $\omega$ be a minimal Lagrangian representation and let $s,s' \in \R/2\pi\Z$; from the explicit expression of $\sigma_\omega$ 
we have that for $\mathcal L^1 \times \nu_{\min}$-a.e. $(t,x)\in (0,T)\times \Omega$ such that $\supp ( \partial_s(\tilde \sigma_\omega)_{t,x}))\cap (s,s')\ne 0$ 
there exists $(\gamma,t^-_\gamma,t^+_\gamma)\in \Gamma_g$ such that
\begin{equation*}
t \in (t^-_\gamma,t^+_\gamma), \qquad \gamma_x(t)=x, \qquad \mbox{and} \qquad \big[\,  \gamma_s(t-)\in (s,s') \quad \mbox{or} \quad   \gamma_s(t+)\in (s,s') \big].
\end{equation*}
Given $s_1,s_2 \in \pi \Q /2\pi\Z$ with $s_1\ne s_2$ and $s_1 \ne s_2 + \pi$, we set
\begin{equation*}
\delta_{s_1,s_2}= \frac{1}{3}\min\left\{ |s_1-s_2|, |s_1 + \pi -s_2| \right\}
\end{equation*}
so that the intervals $I_1:=(s_1-\delta_{s_1,s_2}, s_1 + \delta_{s_1,s_2})$ , $I_2:=(s_2-\delta_{s_1,s_2}, s_2 + \delta_{s_1,s_2})$,
$I_3:=(s_1+\pi -\delta_{s_1,s_2}, s_1 +\pi + \delta_{s_1,s_2})$ and $I_4:=(s_2+\pi -\delta_{s_1,s_2}, s_2 +\pi + \delta_{s_1,s_2})$ are pairwise disjoint and the distance between any two of these intervals is $\delta_{s_1,s_2}$.
We denote by
\begin{equation*}
E(s_1,s_2):= \left\{ (t,x) \in (0,T)\times \Omega:  \supp ( \partial_s(\sigma_\omega)_{t,x})) \cap I_j \ne \emptyset \mbox{ for } j=1,2,3,4 \right\}.
\end{equation*}

It was shown in \cite{GL_eikonal} that the constraint forces $\sigma$ to be $\pi$-periodic in $s$, in particular for $\mathcal L^1 \times \nu_{\min}$-a.e. $(t,x)\in (0,T)\times \Omega$  the support of $\partial_s \sigma_\omega$ is $\pi$-periodic. Therefore if $(t,x)\in (0,T)\times \Omega$ is such that \eqref{E_supp} does not hold, then there exist four distinct points $\bar s_1,\bar s_2,\bar s_1 + \pi, \bar s_2 + \pi \in \R/2\pi\Z$ belonging to $\supp  ( \partial_s( \sigma_\omega)_{t,x})$.
In particular  $\mathcal L^1 \times \nu_{\min}$-a.e. $(t,x)\in (0,T)\times \Omega$ for which \eqref{E_supp} does not hold belongs to
\begin{equation*}
\bigcup_{s_1,s_2 \in \pi \Q/2\pi\Z} E(s_1,s_2).
\end{equation*}
By the discussion at the beginning of the proof, we have that for $\mathcal L^1 \times \nu_{\min}$-a.e. $(t,x)\in E(s_1,s_2)$ 
and every $j=1,2,3,4$ there exists  $(\gamma_j,t^-_{\gamma_j},t^+_{\gamma_j})\in \Gamma_g$ such that
\begin{equation*}
t \in (t^-_{\gamma_j},t^+_{\gamma_j}), \qquad (\gamma_j)_x(t)=x, \qquad \mbox{and} \qquad \big[\,  (\gamma_j)_s(t-)\in I_j \quad \mbox{or} \quad   (\gamma_j)_s(t+)\in I_j) \big].
\end{equation*}
We show that if $(t,x) \in E(s_1,s_2)$, then $x$ is not a vanishing mean oscillation point of $m$.
Let us assume by contradiction that $x$ is a VMO point of $m$ and there exists $t \in (0,T)$ such that $(t,x)\in E(s_1,s_2)$; by applying Lemma \ref{L_duality} for every $j=1,2,3,4$ there exists $\bar s_j \in I_j$ such that
\begin{equation*}
\liminf_{r\to 0} \frac{\L^2(\{x' \in B_r( x): e^{i \bar s_j}\cdot m(x') > -\delta_{s_1,s_2} \})}{r^2} \ge c\delta_{s_1,s_2}.
\end{equation*} 
Since it does not exist any value $\bar m \in \R^2$ with $|\bar m|=1$ such that $\bar m \cdot e^{i\bar s_j}> - \delta_{s_1,s_2}$ for every $j=1,2,3,4$, this proves that $x$ is not a vanishing mean oscillation point of $m$.
Thm \ref{T_structure} implies that $\mathcal H^1$-a.e. $x \in \Omega \setminus J$ is a VMO point of $m$, therefore since 
$\nu_{\min}\ll \mathcal H^1$, then the set of points $x \in \Omega \setminus J$ for which there exists $t\in (0,T)$ such that 
$(t,x)\in E(s_1,s_2)$ is $\nu_{\min}$-negligible.  

Letting $s_1,s_2$ vary in $\pi \Q/2\pi\Z$, this proves the claim.
\end{proof}

\begin{remark}
Proposition \ref{P_factorization} states for the measure $\sigma_0$ the same property we obtained here for a minimal kinetic measure 
$\sigma$. Although $\sigma_0$ is not always a minimal kinetic measure, the two statements are equivalent since 
$\nu_{\min}\le \nu_0 \ll \nu_{\min}$ and $\partial_s \sigma_0 = \partial_s \sigma$ (see the discussion in Lemma \ref{L_minimal}).
\end{remark}

\begin{corollary}\label{C_disintegration}
For every $m\in A(\Omega)$ there exists a unique minimal kinetic measure $\sigma_{\min}$ of $m$.
In particular for every minimal Lagrangian representation $\omega$ of $m$ on $\Omega' \subset \Omega$ it holds
\begin{equation*}
\sigma_\omega= \L^1\llcorner (0,T)\otimes \sigma_{\min}\llcorner \Omega'.
\end{equation*}
Moreover the disintegration of $\sigma_{\min}$ with respect to $\nu_{\min}$ has
the following structure: 
\begin{enumerate}
\item for $\nu_{\min}$-a.e. $x \in \Omega\setminus J$ it holds 
\begin{equation*}
(\sigma_{\min} )_x =\frac{1}{2} (\delta_{\bar s-\frac{\pi}{2}} + \delta_{\bar s + \frac{\pi}{2}}), \qquad \mbox{or} \qquad
(\sigma_{\min} )_x =-\frac{1}{2} (\delta_{\bar s-\frac{\pi}{2}} + \delta_{\bar s + \frac{\pi}{2}}) 
\end{equation*}
for some $\bar s \in \R/2\pi\Z$.
\item for $\nu_{\min}$-a.e. $x \in J$ let $m^+$ and $m^-$ the traces at $x$ as in Theorem \ref{T_structure} and let $\beta\in (0,\pi)$ and 
$\bar s\in \R/2\pi\Z$ be uniquely determined by
\begin{equation*}
m^+= e^{i(\bar s +\beta)}, \qquad \mbox{and} \qquad m^-= e^{i(\bar s -\beta)}.
\end{equation*}
Then
\begin{equation*}
(\sigma_{\min} )_x = \bar g_\beta(s-\bar s)\L^1,
\end{equation*}
where $\bar g_\beta:\R/2\pi\Z \to \R$ is $\pi$-periodic and for every $s \in [0,\pi]$ is defined by
\begin{equation}\label{E_def_bargbeta}
\bar g_\beta(s):=
\begin{cases}
c(\beta) \left[ (\sin s - \cos \beta)\1_{[\pi/2-\beta,\pi/2+\beta]}(s)  \right] &  \mbox{if }\beta \in (0,\pi/4] \\
c(\beta) \left[ (\sin s - \cos \beta)\1_{[\pi/2-\beta,\pi/2+\beta]}(s) + \cos \beta - \frac{\sqrt 2}{2}\right] & \mbox{if }\beta \in (\pi/4,\pi/2] \\
\bar g_{\pi-\beta}(s) & \mbox{if }\beta \in (\pi/2,\pi),
\end{cases}
\end{equation}
and where $c(\beta) >0$ is such that 
\begin{equation*}
\int_0^{2\pi}\left|\bar g_\beta (s)\right| ds = 1.
\end{equation*}
\end{enumerate}
\end{corollary}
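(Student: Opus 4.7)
The plan is to first determine the conditional measures $(\sigma_{\min})_x$ described in items (1)--(2), and then to deduce the uniqueness of $\sigma_{\min}$ together with the identity $\sigma_\omega = \L^1 \otimes \sigma_{\min}\llcorner \Omega'$ as immediate consequences (the base measure $\nu_{\min}$ is already unique by Lemma \ref{L_minimal}).

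For $x \in \Omega \setminus J$, I would combine Proposition \ref{P_partial} with the $\pi$-periodicity of $\sigma_{\min}$ in $s$ (established in the proof of Proposition \ref{P_partial}, following \cite{GL_eikonal}) to conclude that $(\sigma_{\min})_x = a(\delta_{s_0} + \delta_{s_0+\pi}) + c\,\L^1$ for some $a, c \in \R$ and $s_0 \in \R/2\pi\Z$. Since atomic and Lebesgue parts are mutually singular, $\|(\sigma_{\min})_x + \alpha\L^1\| = 2|a| + 2\pi|c + \alpha|$, and the minimality condition \eqref{E_minimal_kinetic} forces $c = 0$; the normalization $\|(\sigma_{\min})_x\| = 1$ then fixes $|a| = 1/2$. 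Setting $\bar s := s_0 + \pi/2$ produces the two $\pm$ branches in the statement.

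For $x \in J$, I would restrict the distributional kinetic equation \eqref{E_kinetic} to the jump set using the trace structure of Theorem \ref{T_structure}(2): the $J$-part of $\nabla_x \chi$ equals $(\chi^+(s) - \chi^-(s))\,n\otimes \H^1\llcorner J$, so reading \eqref{E_kinetic} on $J$ yields $\partial_s (\sigma_{\min})_x = (e^{is}\cdot n)(\chi^+(s) - \chi^-(s))$ for $\nu_{\min}$-a.e. $x \in J$. Since $\div m = 0$ forces $n\cdot(m^+-m^-)=0$, hence $n = \pm e^{i\bar s}$, a direct computation of the indicators shows that, after the change of variable $s \mapsto s - \bar s$, a particular primitive is $g_0\L^1$ with $g_0$ $\pi$-periodic, nonnegative, vanishing outside the arcs $(\pi/2 - \beta, \pi/2 + \beta) \cup (-\pi/2 - \beta, -\pi/2 + \beta)$, and equal to $\sin s - \cos\beta$ on the first arc. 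The additive constant $C$ for which $(\sigma_{\min})_x = (g_0 + C)\L^1$ satisfies \eqref{E_minimal_kinetic} is then selected by the one-dimensional convex minimization $\phi(C) := \|(g_0 + C)\L^1\|$: since $g_0 \ge 0$ has support of measure $4\beta$, the subdifferential of $\phi$ at $0$ is $[8\beta - 2\pi,\, 2\pi]$, so $C = 0$ is optimal exactly when $\beta \le \pi/4$ (first branch of \eqref{E_def_bargbeta}); for $\beta \in (\pi/4, \pi/2]$ the stationarity condition $|\{g_0 > -C\}| = \pi$ combined with the explicit form of $g_0$ forces $C = \cos\beta - \sqrt{2}/2$ (second branch). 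The case $\beta \in (\pi/2, \pi)$ reduces to the previous one by the reparameterization $\bar s \mapsto \bar s + \pi$, $\beta \mapsto \pi - \beta$, which preserves $\{m^+, m^-\}$; normalization $\|\bar g_\beta\|_{L^1}=1$ fixes $c(\beta)$.

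The main obstacle is the convex minimization for $\beta > \pi/4$: the naive choice $C = 0$ is no longer optimal, and one has to execute the optimization carefully, verifying global (not merely local) minimality, to recover the non-obvious correction $\cos\beta - \sqrt{2}/2$ appearing in \eqref{E_def_bargbeta}. A smaller subtlety is the careful justification of restricting \eqref{E_kinetic} to the $\H^1$-rectifiable jump set $J$, which relies on the fine trace structure of Theorem \ref{T_structure}. Once items (1)--(2) are established, the uniqueness of $\sigma_{\min}$ is automatic, and the formula $\sigma_\omega = \L^1 \otimes \sigma_{\min}\llcorner \Omega'$ follows directly from the definition of minimal Lagrangian representation together with this uniqueness.
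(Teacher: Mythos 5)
Your proposal follows the same overall strategy as the paper, and parts of it are literally the paper's argument: for $x\in\Omega\setminus J$ you combine Proposition \ref{P_partial} with the $\pi$-periodicity of the kinetic measure and then kill the Lebesgue part via \eqref{E_min}, exactly as in the paper's proof; the uniqueness of $\sigma_{\min}$ and the identity $\sigma_\omega=\L^1\otimes\sigma_{\min}\llcorner\Omega'$ are likewise obtained, as in the paper, as immediate consequences of the pointwise identification of $\sigma_x$ together with the definition of minimal Lagrangian representation. Where you genuinely add something is the minimization over the additive constant on $J$: the paper dismisses this as ``a straightforward computation'', while your subdifferential/median argument (optimality of $C=0$ iff $8\beta-2\pi\le 0$, and the stationarity condition $\L^1(\{g_0>-C\})=\pi$ giving $C=\cos\beta-\tfrac{\sqrt2}{2}$ for $\beta\in(\pi/4,\pi/2]$) is the correct explicit version of it, and the reduction of $\beta\in(\pi/2,\pi)$ by swapping $m^+$ and $m^-$ matches the paper.

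The one step you should not take at face value is the identification of $\partial_s(\sigma_{\min})_x$ on $J$ by ``reading \eqref{E_kinetic} on $J$'': since $m\in A(\Omega)$ need not be $\BV$, $\nabla_x\chi$ is only a distribution, so ``the $J$-part of $\nabla_x\chi$'' is not a defined object, and Theorem \ref{T_structure}(2) alone (existence of traces) does not give you the restriction of the kinetic equation to the jump set. The rigorous substitute is precisely what the paper uses: Theorem \ref{T_structure}(3), which states $\mu_\Phi\llcorner J=[\mathbf n\cdot(\Phi(m^+)-\Phi(m^-))]\H^1\llcorner J$ for every entropy, combined with \eqref{E_kin_entropy} to translate this into the statement that $\sigma_x$ tested against $\psi'_\Phi$ for all $\pi$-periodic $\psi_\Phi$ reproduces the jump bracket, and then the explicit computation \eqref{E_computation_GL} of \cite{GL_eikonal} (plus the rotation $\psi_{\tilde\Phi}(s)=\psi_\Phi(s+\bar s)$) to recognize the density $g_\beta(\cdot-\bar s)$ up to an additive constant. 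Note that testing only against $\pi$-periodic $\psi$ suffices, because $\sigma$ is itself $\pi$-periodic in $s$, so the ambiguity left is exactly the constant multiple of $\L^1$ that your optimization then removes. With that replacement your argument closes, and the resulting density is, up to the constant, the same $g_0$ you compute by hand; so the proposal is essentially correct, but the jump-set localization must be routed through the entropy identity rather than through a $\BV$-type decomposition of $\nabla_x\chi$.
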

\begin{proof}
In particular let $\sigma$ be a minimal kinetic measure;
since $\sigma$ is $\pi$-periodic in the variable $s$, it follows from Proposition \ref{P_partial} that for $\nu_{\min}$-a.e. 
$x \in \Omega\setminus J$ it holds
\begin{equation*}
\sigma_x =\frac{1}{2+2\pi c} (\delta_{\bar s-\frac{\pi}{2}} + \delta_{\bar s + \frac{\pi}{2}} + c \L^1), \qquad \mbox{or} \qquad
\sigma_x =-\frac{1}{2+ 2\pi c} (\delta_{\bar s-\frac{\pi}{2}} + \delta_{\bar s + \frac{\pi}{2}} +c \L^1)
\end{equation*}
for some $\bar s \in \R/2\pi\Z$ and some $c \in \R$ depending on $x$. The necessary and sufficient condition \eqref{E_min} for minimality
trivially implies $c=0$.
By Theorem \ref{T_structure} and \eqref{E_kin_entropy} it holds
\begin{equation*}
n\cdot \left( \Phi(m^+)-\Phi(m^-) \right) \H^1\llcorner J = (p_x)_\sharp \left( - \partial_s\psi_\Phi \sigma \llcorner J\times \R/2\pi\Z\right).
\end{equation*}
The following identity was obtained in Section 4.2 of \cite{GL_eikonal}: for every $\beta \in [0,\pi/2]$ it holds
\begin{equation}\label{E_computation_GL}
e_1 \cdot (\Phi(e^{i\beta})-\Phi(e^{-i\beta})) = - \int_0^{2\pi}g_\beta(s) \partial_s\psi_\Phi(s)ds,
\end{equation}
where $g_\beta:\R/2\pi\Z \to \R$ is a $\pi$-periodic defined by
\begin{equation*}
g_\beta(s) = (\sin s - \cos \beta)\1_{[\pi/2-\beta,\pi/2+\beta]}(s) - \frac{2}{\pi}(\sin \beta - \beta \cos \beta) \qquad \forall s \in [0,\pi].
\end{equation*}
Let us assume first that $\beta \in [0,\pi/2]$; then with the notation in the statement we have $n= e^{i\bar s}$, therefore choosing $\tilde \Phi$ such that 
$\psi_{\tilde \Phi}(s) = \psi_\Phi(s + \bar s)$ we deduce from \eqref{E_computation_GL} that
\begin{align*}
n\cdot \left( \Phi(m^+)-\Phi(m^-) \right)  = &~ e^{i\bar s}\cdot \left( \Phi\left(e^{i(\bar s + \beta) }\right) -\Phi\left(e^{i(\bar s - \beta) }\right) \right)\\
= &~ e^{i\bar s}\cdot \int_{-\beta}^\beta \psi_{\Phi}\left(s+\bar s+ \frac{\pi}{2}\right) e^{i \left( s + \bar s +\frac{\pi}{2}\right)}ds \\
=&~ e_1 \cdot  \int_{-\beta}^{\beta} \psi_{\Phi} \left( s+\bar s+\frac{\pi}{2}\right) e^{i \left( s  +\frac{\pi}{2}\right)}ds \\
=&~ e_1 \cdot  \left( \tilde \Phi\left(e^{i\beta}\right)-\tilde \Phi\left(e^{-i\beta}\right) \right) \\
= &~ - \int_0^{2\pi}g_\beta(s)\psi'_{\tilde \Phi}(s)ds \\
= &~ - \int_0^{2\pi}g_\beta(s-\bar s)\psi'_{\Phi}(s)ds.
\end{align*}

This shows that for $\nu_{\min}$-a.e. $x \in J$ with $\beta \in (0,\pi/2)$ there exist two constants $c_1>0$ and $c_2 \in \R$ such that 
$\sigma_x= c_1(g_\beta(\cdot-\bar s) + c_2) \L^1$. It is a straightforward computation to check that the choice in \eqref{E_def_bargbeta} is the unique that satisfies the constraint in \eqref{E_min}. In particular $\sigma_x$ is uniquely determined for $\nu_{\min}$-a.e. $x \in J$ such that $\beta \in (0,\pi/2)$.

If instead $\beta \in (\pi/2,\pi)$, then $n= -e^{i\bar s}$ and it can be reduced to the previous case exchanging $m^+$ with $m^-$, 
and therefore changing the sign of $n$. Since $\partial_s \psi_\Phi$ and $g_\beta$ for $\beta\in (0,\pi/2]$ are $\pi$-periodic, then the same computations as above leads to
\begin{equation*}
n\cdot \left( \Phi(m^+)-\Phi(m^-) \right)  =  -\int_0^{2\pi}g_{\pi -\beta}(s-\bar s)\partial_s\psi_{\Phi}(s)ds.
\end{equation*}
Similarly the choice in \eqref{E_def_bargbeta} is the unique that satisfies the constraint \eqref{E_min}.
Being $\sigma_x$ uniquely determined for $\nu_{\min}$-a.e. $x \in \Omega$, the measure $\sigma_{\min}$ is unique.
\end{proof}

The following lemma links the jump set of the characteristic curves with the jump set of $m \in A(\Omega)$.
\begin{lemma}\label{L_jumps}
Let $m \in A(\Omega)$ and $\Omega'$ be a $W^{2,\infty}$ open set compactly contained in $\Omega$. Let moreover $\omega$ be a minimal Lagrangian
representation of $m$ on $\Omega'$. Then for $\omega$-a.e. $(\gamma,t^-_\gamma,t^+_\gamma) \in \Gamma$ the following property holds:
for every $t \in (t^-_\gamma, t^+_\gamma)$ such that $\gamma_s(t+)\ne \gamma_s(t-)$ it holds $\gamma_x(t)\in J$.
\end{lemma}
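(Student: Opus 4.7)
The plan is to combine the non-cancellation identity \eqref{E_moduli}, namely $|\sigma_\omega| = \int_\Gamma |\sigma_\gamma|\,d\omega$, with the atomic structure of the minimal kinetic measure on $\Omega' \setminus J$ provided by Corollary \ref{C_disintegration}(1).

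First I would identify a measurable ``bad'' set. By minimality of $\omega$, $\sigma_\omega = \L^1\llcorner(0,T) \otimes \sigma_{\min}\llcorner\Omega'$, and Corollary \ref{C_disintegration}(1) furnishes a measurable map $\bar s:\Omega'\setminus J\to \R/2\pi\Z$ such that for $\nu_{\min}$-a.e.\ $x\in\Omega'\setminus J$ one has $(\sigma_{\min})_x = \pm\tfrac{1}{2}(\delta_{\bar s(x)-\pi/2}+\delta_{\bar s(x)+\pi/2})$. After extending $\bar s$ arbitrarily on the $\nu_{\min}$-negligible exceptional set, I set
\begin{equation*}
N := \bigl\{(t,x,s)\in(0,T)\times(\Omega'\setminus J)\times\R/2\pi\Z : s\notin\{\bar s(x)-\tfrac{\pi}{2},\,\bar s(x)+\tfrac{\pi}{2}\}\bigr\},
\end{equation*}
and, since the fibers of $|\sigma_{\min}|$ over $\Omega'\setminus J$ concentrate exactly on those two atoms, I get $|\sigma_\omega|(N)=0$.

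Next I would transfer this vanishing to individual curves. From \eqref{E_moduli} applied to $N$, $\int_\Gamma |\sigma_\gamma|(N)\,d\omega = |\sigma_\omega|(N) = 0$, so $|\sigma_\gamma|(N) = 0$ for $\omega$-a.e.\ $\gamma$. The three summands of $\sigma_\gamma$ in \eqref{E_def_tildesigmagamma} have mutually disjoint supports: the continuous part $(\Id,\gamma)_\sharp \tilde D_t\gamma_s$ is carried by continuity times of $\gamma_s$ (since $\tilde D_t\gamma_s$ charges no atom), while $E_\gamma^+$ and $E_\gamma^-$ sit over $J_\gamma$ on opposite $s$-arcs by \eqref{E_def_Epmgamma}. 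Consequently $|\sigma_\gamma|\ge \H^1\llcorner E_\gamma^++\H^1\llcorner E_\gamma^-$.

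Finally I would conclude. Pick $\gamma$ in the full-$\omega$-measure set where $|\sigma_\gamma|(N)=0$, and let $t\in J_\gamma$ with $x_0 := \gamma_x(t)\in \Omega'\setminus J$. By \eqref{E_def_Epmgamma}, the fiber of $E_\gamma^+\cup E_\gamma^-$ over $(t,x_0)$ is an arc in $\R/2\pi\Z$ whose $\H^1$-mass equals the arc-distance between $\gamma_s(t-)$ and $\gamma_s(t+)$, and deleting the two points $\bar s(x_0)\pm\pi/2$ does not change this $\H^1$-mass; hence it is dominated by $|\sigma_\gamma|(N)=0$. This forces $\gamma_s(t-)=\gamma_s(t+)$, contradicting the assumption that $t$ is a genuine jump, and therefore every jump time of $\gamma_s$ lies above $J$. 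The only mildly delicate point is the measurable selection of $\bar s(\cdot)$, which is immediate from the explicit disintegration in Corollary \ref{C_disintegration}; no real obstacle is expected.
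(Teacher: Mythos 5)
Your proof is correct and rests on exactly the same ingredients as the paper's: the non-cancellation identity \eqref{E_moduli}, the identification $\sigma_\omega=\L^1\otimes\sigma_{\min}$ for minimal representations, and the two-atom structure of $(\sigma_{\min})_x$ off $J$ from Corollary \ref{C_disintegration}, played against the fact that a genuine jump of $\gamma_s$ contributes a non-atomic $\H^1$-arc to $|\sigma_\gamma|$. The only difference is organizational: you test against a fixed universal null set $N$ (needing measurability of $\bar s(\cdot)$), whereas the paper argues by contradiction via a measurable selection of jump times over $\Omega'\setminus J$; this is not a genuinely different route.
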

\begin{proof}
Since $\omega$ is a minimal Lagrangian representation, by Proposition \ref{P_Lagr} and Corollary \ref{C_disintegration} it holds
\begin{equation*}
\int_\Gamma |\sigma_\gamma| d\omega = |\sigma_\omega| = \mathcal L^1 \times |\sigma_{\min}| = \mathcal L^1 \times \left( \nu_{\min}\otimes |(\sigma_{\min})_x|\right)
\end{equation*}
as measures in $(0,T)\times \Omega'\times \R/2\pi\Z$. By Corollary \ref{C_disintegration} it follows that for 
$\mathcal L^1\times \nu_{\min}$-a.e. $(t,x) \in (0,T)\times (\Omega \setminus J)$ it holds 
\begin{equation}\label{E_supp_two_points}
\supp \left( \mathcal L^1 \times |\sigma_{\min}|\right)_{t,x} \subset \{ \bar s, \bar s + \pi\}
\end{equation}
for some $\bar s \in \R/2\pi\Z$. Suppose by contradiction that there exists
$ G \subset \Gamma$ with $\omega(G)>0$ and a measurable function $\tilde t : G \to (0,T)$ such that for every $(\gamma,t^-_\gamma,t^+_\gamma)$ in $G$ 
it holds 
\begin{equation*}
\tilde t (\gamma) \in (t^-_\gamma,t^+_\gamma), \qquad  \gamma_s\left(\tilde t(\gamma)+\right)\ne  \gamma_s\left(\tilde t(\gamma)+\right), \qquad \mbox{and} \qquad \gamma_x\left(\tilde t(\gamma)\right) \in \Omega'\setminus J.
\end{equation*}
For every $(\gamma,t^-_\gamma,t^+_\gamma) \in G$ we set 
\begin{equation*}
\tilde \sigma_\gamma = \mathcal H^1\llcorner E^+_\gamma\left(\tilde t(\gamma)\right) - \mathcal H^1\llcorner E^-_\gamma\left(\tilde t(\gamma)\right), \qquad \mbox{where} \qquad  E^\pm_\gamma\left(\tilde t(\gamma)\right) := \{ (t,x,s) \in  E^\pm_\gamma: t = \tilde t(\gamma)\}
\end{equation*}
and $E^{\pm}_\gamma$ are defined in \eqref{E_def_Epmgamma}.
Let $\tilde \sigma_\omega := \int_\Gamma |\tilde \sigma_\gamma|d\omega \in \M^+((0,T)\times \Omega'\times \R/2\pi\Z)$; 
by definition we have $\tilde \sigma_\omega\le |\sigma_\omega|$. 
Let us denote by $\tilde \nu:=(p_{t,x})_\sharp \tilde \sigma_\gamma$.
Then by definition of $\tilde \sigma_\omega$ we have that $\tilde \nu$ is concentrated on $(0,T)\times \Omega'\setminus J$ and for 
$\tilde \nu$-a.e. $(t,x) \in (0,T)\times \Omega'\setminus J$ there exist no $\bar s \in \R/2\pi\Z$ such that
$\supp (\tilde \sigma_\omega)_{t,x} \subset \{\bar s,\bar s + \pi\}$. Since $\tilde \nu(\Omega')>0$, this is in contradiction with \eqref{E_supp_two_points}.
\end{proof}

\subsection{Solutions with a single vanishing entropy}
The goal of this section is to prove the following result about solutions with vanishing entropy production.

\begin{proposition}\label{P_vanishing}
Let $\Omega\subset \R^2$ be an open set and $m\in A(\Omega)$ be such that $\div\Sigma_{\e_1,\e_2}(m)=0$. Then $J$ is contained in the union of countably many horizontal and vertical segments. Moreover $\nu_{\min}$ is concentrated on $J$.  
\end{proposition}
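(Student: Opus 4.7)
The plan is to turn $\div\Sigma_{\e_1,\e_2}(m)=0$ into a pointwise constraint on the disintegration $(\sigma_{\min})_x$ via \eqref{E_kin_entropy}, and then read this constraint on the two explicit forms provided by Corollary~\ref{C_disintegration}. Since $(\e_1,\e_2)$ is the $\pi/4$-rotation of $(e_1,e_2)$, rotation covariance combined with the computation $\psi_{\Sigma_{e_1,e_2}}(s)=\sin(2s)$ carried out in Section~4.2 of \cite{GL_eikonal} gives $\psi_{\Sigma_{\e_1,\e_2}}(s)=-\cos(2s)$, hence $\psi'_{\Sigma_{\e_1,\e_2}}(s)=2\sin(2s)$. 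Combining with the hypothesis and $\partial_s\sigma_0=\partial_s\sigma_{\min}$ yields, for $\nu_{\min}$-a.e.\ $x\in\Omega$,
\[
\int_{\R/2\pi\Z}\sin(2s)\,d(\sigma_{\min})_x(s)=0.
\]

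I would then plug in the two disintegration profiles from Corollary~\ref{C_disintegration}. On $\Omega\setminus J$, $(\sigma_{\min})_x=\pm\tfrac{1}{2}(\delta_{\bar s-\pi/2}+\delta_{\bar s+\pi/2})$ gives $\mp\sin(2\bar s)=0$, so $\bar s\in\tfrac{\pi}{2}\Z$ and $m(x)\in\{\pm e_1,\pm e_2\}$ in the VMO sense. On $J$, $(\sigma_{\min})_x=\bar g_\beta(\cdot-\bar s)\L^1$: since $\bar g_\beta$ is $\pi$-periodic and symmetric about $\pi/2$, one has $\int\sin(2u)\bar g_\beta(u)\,du=0$, so expanding $\sin(2(u+\bar s))=\cos(2\bar s)\sin(2u)+\sin(2\bar s)\cos(2u)$ leaves
\[
\int\sin(2s)\,d(\sigma_{\min})_x = \sin(2\bar s)\int_0^{2\pi}\cos(2u)\,\bar g_\beta(u)\,du.
\]
An explicit computation from \eqref{E_def_bargbeta} (the piecewise constant corrections vanish against $\cos(2u)$ on $[0,\pi]$, and the remaining integral reduces to $\int_0^\beta\cos(2v)(\cos v-\cos\beta)\,dv=\tfrac{1}{3}\sin^3\beta$) gives the second factor equal to $-\tfrac{4c(\beta)}{3}\sin^3\beta\ne 0$, so again $\sin(2\bar s)=0$ also at $\nu_{\min}$-a.e.\ $x\in J$. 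Consequently the normal $\mathbf{n}=\pm e^{i\bar s}$ to $J$ is horizontal or vertical at $\H^1$-a.e.\ $x\in J$; combined with the $\H^1$-rectifiability of $J$ and the standard covering of rectifiable sets by Lipschitz graphs (here with constant slope $0$ or $\infty$), this places $J$ inside a countable union of horizontal and vertical segments.

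For the concentration statement, the case analysis above already gives $m(x)\in\{\pm e_1,\pm e_2\}$ in the VMO sense at $\nu_{\min}$-a.e.\ $x\in\Omega\setminus J$, and the atoms of $(\sigma_{\min})_x$ sit precisely at the boundary values $\bar s\pm\pi/2$ of the half-disc $\{s:e^{is}\cdot m(x)>0\}$. By Lemma~\ref{L_jumps} and the decomposition \eqref{E_def_tildesigmagamma}, for a minimal Lagrangian representation $\omega$ the $\H^1\llcorner E^\pm_\gamma$ contributions to $\sigma_\omega$ are supported on $J$, and the absolutely continuous part of $(\Id,\gamma)_\sharp\tilde D_t\gamma_s$ charges only values $\gamma_s(t)$ strictly inside the half-disc (since $\gamma\in\Gamma_g$); hence the atomic mass at $(x,\bar s\pm\pi/2)$ with $x\in\Omega\setminus J$ must arise from Cantor parts of $\gamma_s$ along $\omega$-positive families of curves whose values concentrate at the boundary of the half-disc of $m(\gamma_x)$. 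I would exploit this tension by selecting, at a typical $x\in\supp\nu_{\min}\cap(\Omega\setminus J)$, such curves with $\gamma_s(t\pm)$ approaching $\bar s\pm\pi/2$ and applying Lemma~\ref{L_duality} with $\bar s$ slightly perturbed inside the half-disc; a blow-up argument at $x$ combined with the atomic formula from Corollary~\ref{C_disintegration} would then produce a density lower bound $\nu_{\min}(B_r(x))\ge c\delta^3 r$ for arbitrarily small $\delta>0$, contradicting the finite atomic structure of $(\sigma_{\min})_x$ on $\Omega\setminus J$.

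The main obstacle is making this last step rigorous: the VMO rigidity on $\Omega\setminus J$ is qualitative, while the dichotomy of Lemma~\ref{L_duality} requires quantitative control on the angular perturbations $\delta$ and on the $\omega$-mass of the selected curves. Carefully tracking the Cantor part of $\gamma_s$ at boundary values, together with the fine structure given by Corollary~\ref{C_disintegration}, should close the contradiction, but this is where the work really lies.
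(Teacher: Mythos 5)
Your first half is correct and follows the paper's route: you convert $\div\Sigma_{\e_1,\e_2}(m)=0$ into the pointwise constraint $\int\sin(2s)\,d(\sigma_{\min})_x=0$ for $\nu_{\min}$-a.e.\ $x$, and reading it on the two profiles of Corollary \ref{C_disintegration} forces $\bar s\in\frac{\pi}{2}\Z$ both on $J$ (your computation $\int_0^\beta\cos(2v)(\cos v-\cos\beta)\,dv=\frac13\sin^3\beta$ is the content of \eqref{E_vanishing}) and on $\Omega\setminus J$. That settles the statement about $J$ lying in countably many horizontal and vertical segments.

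The gap is in the concentration statement. Your proposed endgame --- selecting curves whose Cantor parts concentrate at the boundary values $\bar s\pm\pi/2$, applying Lemma \ref{L_duality} with perturbed angles and a blow-up to get $\nu_{\min}(B_r(x))\ge c\delta^3 r$, and declaring this in contradiction ``with the finite atomic structure of $(\sigma_{\min})_x$'' --- does not close: a one-dimensional density lower bound for $\nu_{\min}$ at a point of $\Omega\setminus J$ is perfectly compatible with $(\sigma_{\min})_x$ being a pair of atoms (that is exactly what happens at points of $J$), so no contradiction is produced, and you yourself flag the step as unfinished. The missing idea is much simpler and is exactly where Lemma \ref{L_BV_Lusin} enters, which you never use. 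By minimality and \eqref{E_moduli}, $|\sigma_\omega|=\int_\Gamma|\sigma_\gamma|\,d\omega=\L^1\otimes|\sigma_{\min}|$; by Lemma \ref{L_jumps} the jump terms $\H^1\llcorner E^\pm_\gamma$ in \eqref{E_def_tildesigmagamma} only charge $J$, so over $\Omega\setminus J$ the measure is carried by the diffuse parts $\tilde D_t\gamma_s$ of the characteristics. By the first half, over $\Omega\setminus J$ the angular marginal is concentrated on the countable set $\frac{\pi}{2}\Z$, hence
\begin{equation*}
\L^1\times \nu_{\min}\llcorner(\Omega\setminus J)\le \int_\Gamma (\gamma_x)_\sharp\, |\tilde D_t\gamma_s|\Bigl(\gamma_s^{-1}\Bigl(\tfrac{\pi}{2}\Z\Bigr)\Bigr)\,d\omega(\gamma),
\end{equation*}
and Lemma \ref{L_BV_Lusin} says the diffuse part of a $\BV$ function gives zero mass to the preimage of any countable set --- this applies to the Cantor part as well, which is precisely the contribution your argument could not control. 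So $\nu_{\min}(\Omega\setminus J)=0$ in two lines, with no need for Lemma \ref{L_duality} or any blow-up.
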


The result follows from Proposition \ref{P_partial} and the following general result about BV functions for which we refer to \cite{AFP_book}.

\begin{lemma}\label{L_BV_Lusin}
Let $f \in \BV((0,T);\R)$ be continuous from the right. Then for every $E\subset \R$ at most countable it holds
\begin{equation*}
\big|\tilde Df\big|\left(f^{-1}(E)\right)=0.
\end{equation*}
\end{lemma}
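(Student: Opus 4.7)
The plan is to reduce to a singleton $E=\{y\}$ and then use the Vol'pert chain rule for BV functions with a family of Lipschitz truncations that vanish to first order near $y$. Since $f^{-1}(E)=\bigcup_{y\in E}f^{-1}(\{y\})$ is a countable union, subadditivity of $|\tilde Df|$ immediately reduces the statement to proving $|\tilde Df|(f^{-1}(\{y\}))=0$ for each fixed $y\in\R$.

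For such $y$, I would take the Lipschitz functions $\phi_n(t):=t-\int_0^t(1-n|s-y|)^+\,ds$, which satisfy $\phi_n'(y)=0$, $\phi_n'\equiv 1$ outside $[y-1/n,y+1/n]$, $|\phi_n'|\le 1$, and $|\phi_n(t)-t|\le 1/n$ so that $\phi_n\to\mathrm{id}$ uniformly on $\R$. The BV chain rule (see AFP Thm.~3.96) gives $\tilde D(\phi_n\circ f)=\phi_n'(\hat f)\,\tilde Df$, where $\hat f$ denotes the precise representative of $f$. Since $|\phi_n'(\hat f)|\le 1$ and $\phi_n'(\hat f(t))\to\1_{\{\hat f(t)\ne y\}}$ pointwise, dominated convergence yields $\tilde D(\phi_n\circ f)\to\1_{\{\hat f\ne y\}}\tilde Df$ as signed measures on $(0,T)$. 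On the other hand, since $\phi_n\circ f\to f$ uniformly, $D(\phi_n\circ f)\to Df$ in the sense of distributions, while the jump parts $D^j(\phi_n\circ f)=\sum_{t\in J_f}[\phi_n(f(t+))-\phi_n(f(t-))]\delta_t$ converge to $D^jf$ by dominated convergence over the summable family of jump amplitudes. Subtracting, $\tilde D(\phi_n\circ f)\to\tilde Df$.

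Comparing the two limits gives $\tilde Df=\1_{\{\hat f\ne y\}}\tilde Df$, hence $|\tilde Df|(\{\hat f=y\})=0$. Since $f$ is right-continuous, $\{f=y\}\triangle\{\hat f=y\}\subset J_f$, and $|\tilde Df|$ has no atoms (being the sum of the absolutely continuous and Cantor parts of $Df$), so $|\tilde Df|(J_f)=0$ and the conclusion $|\tilde Df|(f^{-1}(\{y\}))=0$ follows. The main delicate point is that the chain rule naturally produces $\phi_n'(\hat f)$ evaluated at the precise representative rather than at the chosen right-continuous one; the transfer from $\{\hat f=y\}$ to $\{f=y\}$ relies crucially on the atomless character of the diffuse derivative along the countable set $J_f$.
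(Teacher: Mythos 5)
Your argument is correct. Note that the paper does not actually prove this lemma: it is stated as a known fact with a reference to [AFP], where it follows from the standard results that the absolutely continuous part of $Df$ has zero density a.e.\ on level sets and that the Cantor part vanishes on sets of the form $\hat f^{-1}(E)$ with $\L^1(E)=0$ (together with the observation, which you also make, that $|\tilde Df|$ gives no mass to the countable set $J_f$, so one may pass from the precise representative $\hat f$ to the right-continuous one). Your proposal instead reproves the relevant fact from scratch in the countable case: the reduction to a singleton, the $C^1$ truncations $\phi_n$ with $\phi_n'(y)=0$, the chain rule $\tilde D(\phi_n\circ f)=\phi_n'(\hat f)\,\tilde Df$, and the comparison of the two limits (setwise convergence of $\phi_n'(\hat f)\,\tilde Df$ versus distributional convergence of $D(\phi_n\circ f)-D^j(\phi_n\circ f)$) yield $\tilde Df\llcorner\{\hat f=y\}=0$, and the passage from $\hat f$ to $f$ is handled exactly as in the citation route via atomlessness of $\tilde Df$ on $J_f$. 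What the citation buys is brevity and a stronger statement (any $\L^1$-negligible $E$, not just countable $E$); what your argument buys is a self-contained, elementary proof using only the $C^1$ chain rule, and it makes explicit the representative issue ($\hat f$ versus the right-continuous $f$) that the paper leaves implicit. The only cosmetic caveats: the chain rule you invoke is the one for Lipschitz $C^1$ outer functions (your $\phi_n$ are indeed $C^1$, so Vol'pert's extension is not needed), and $D^j(\phi_n\circ f)$ should be read with the convention that jump points of $f$ collapsed by $\phi_n$ contribute zero coefficients; neither affects the validity of the proof.
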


\begin{proof}[Proof of Proposition \ref{P_vanishing}]
We recall from \cite{GL_eikonal} that 
\begin{equation*}
\div\Sigma_{\e_1,\e_2}(m)= - 2(p_x)_\sharp \left[\sin(2s)\sigma\right].
\end{equation*} 
For $\nu_{\min}$-a.e. $x \in J$ it holds $n=\pm e^{i\bar s}$, therefore in order to show that $J$ is contained in a countable union of horizontal and vertical segments, it is sufficient to observe that for every 
$\beta\in(0,\pi)$ it holds
\begin{equation}\label{E_vanishing}
\int_{\R/2\pi\Z}g_{\beta}(s-\bar s)\sin(2s) ds = 0 \qquad \Longrightarrow \qquad \bar s \in \frac{\pi}{2}\Z.
\end{equation} 
This follows straightforwardly by the explicit expression of $g_\beta$ in \eqref{E_def_bargbeta}.
Now we prove that $\nu_{\min}$ is concentrated on $J$: by Corollary \ref{C_disintegration} and \eqref{E_vanishing}, for $\nu_{\min}$-a.e. 
$x \in \Omega\setminus J$ it holds
\begin{equation*}
\int_{\R/2\pi\Z}\sin(2s) d( \delta_{\bar s} + \delta_{\bar s + \pi})=0,
\end{equation*}
which trivially implies $\bar s \in \frac{\pi}{2}\Z$. By Lemma \ref{L_jumps}, we have 
\begin{equation*}
\L^1\times \nu_{\min}\llcorner (\Omega\setminus J) \le \int_\Gamma (\gamma_x)_\sharp |\tilde D_t \gamma_s|\left(\gamma_s^{-1}\left(\frac{\pi}{2}\Z\right)\right)d\omega(\gamma) = 0,
\end{equation*}
where in the last equality we used Lemma \ref{L_BV_Lusin}.
\end{proof}
\begin{remark}
The same argument shows that the assumption $\div\,\Sigma_{\e_1,\e_2}(m)=0$ can be replaced with $\div \Phi(m)=0$ for any $\Phi \in \mathcal E_\pi$ such that
$\{s:\partial_s \psi_\Phi(s)=0\}$ is at most countable.
\end{remark}

\section{Uniqueness of minimizers on ellipses}
The goal of this section is to prove Theorem \ref{T_unique}. Since the functional $\tilde F_0$ is invariant by rotations, then
we will assume without loss of generality that the major axis of the ellipse is parallel to $x$-axis in the plane.

The following result is essentially contained in \cite{JK_entropies} (see also \cite{IM_eikonal}): we report here the proof for completeness.
\begin{proposition}
Let $\bar u^\delta$ be defined as in Theorem \ref{T_unique}. Then $\bar u^\delta$ is a minimizer of $\tilde F_0(\cdot, \Omega_\delta)$ 
in $\Lambda^0_\delta(\Omega)$. Moreover, for every minimizer $u^\delta$ of $\tilde F_0(\cdot, \Omega_\delta)$ 
in $\Lambda^0_\delta(\Omega)$ the function $m = \nabla^\perp u^\delta$ satisfies
\begin{equation*}
\div \Sigma_{\e_1,\e_2}(m)= 0 \qquad \mbox{and} \qquad \div \Sigma_{e_1,e_2}(m)\ge 0 \qquad \mbox{in }\mathcal D'(\Omega_\delta).
\end{equation*}
\end{proposition}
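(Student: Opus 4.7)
The plan is to combine the entropic lower bound with the axial symmetries of the ellipse and then invoke Corollary \ref{C_disintegration} to pin down the entropy production of any minimizer.

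First, for every entropy $\Phi\in\E$ and every admissible $m\in A(\Omega_\delta)$, integration by parts gives
\begin{equation*}
\mu_\Phi(\Omega_\delta) = \int_{\partial\Omega_\delta}\Phi(m)\cdot n\, d\H^1,
\end{equation*}
which depends only on the boundary trace of $m$, hence coincides with the value for $\bar u^\delta$. The reflection group $\Z_2\times\Z_2$ of the ellipse keeps $\Sigma_{e_1,e_2}(m)\cdot n$ invariant and reverses $\Sigma_{\e_1,\e_2}(m)\cdot n$, so I get
\begin{equation*}
\mu_{\Sigma_{\e_1,\e_2}}(\Omega_\delta)=0,\qquad \mu_{\Sigma_{e_1,e_2}}(\Omega_\delta)=:I^*>0,
\end{equation*}
where the positivity and the explicit value $I^*=\int_J\tfrac{8}{3}\sin^3\beta\,d\H^1$ are read off from the jump set $J$ of $\nabla\bar u^\delta$ along the major-axis ridge ($\beta$ being half the opening angle of the jump).

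Second, for any admissible $m$ one has $\tilde F_0(m,\Omega_\delta)\geq|\mu_{\Sigma_{e_1,e_2}}|(\Omega_\delta)\geq I^*$. On the ridge of $\bar u^\delta$ the traces $\nabla^\perp\bar u^{\delta,\pm}$ are reflections of each other across the (vertical) normal, and a direct computation shows that the density of $\mu_{\Sigma_{\alpha_1,\alpha_2}}(\bar u^\delta)$ has the form $\tfrac{8}{3}\sin^3\beta\cos(2\phi)$, where $\phi$ is the rotation angle of $(\alpha_1,\alpha_2)$ from the standard basis; the pointwise supremum in $\phi$ is attained by $(e_1,e_2)$, so $\tilde F_0(\bar u^\delta,\Omega_\delta)=I^*$. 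This proves minimality of $\bar u^\delta$. For any minimizer $m^*$ the equality $\tilde F_0(m^*,\Omega_\delta)=I^*$ forces $|\mu_{\Sigma_{e_1,e_2}}(m^*)|(\Omega_\delta)=\mu_{\Sigma_{e_1,e_2}}(m^*)(\Omega_\delta)$, i.e.\ $\div\Sigma_{e_1,e_2}(m^*)\geq 0$. In addition, because $\bigvee_{(\alpha_1,\alpha_2)}|\mu_{\Sigma_{\alpha_1,\alpha_2}}(m^*)|$ dominates $|\mu_{\Sigma_{e_1,e_2}}(m^*)|$ and has equal total mass, the two measures must coincide, hence $|\mu_{\Sigma_{\alpha_1,\alpha_2}}(m^*)|\leq\mu_{\Sigma_{e_1,e_2}}(m^*)$ as measures for every orthonormal pair.

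The main obstacle is upgrading the scalar identity $\mu_{\Sigma_{\e_1,\e_2}}(m^*)(\Omega_\delta)=0$ to the vectorial one $\mu_{\Sigma_{\e_1,\e_2}}(m^*)=0$. For this I would test the pointwise domination just obtained using Corollary \ref{C_disintegration} and Theorem \ref{T_structure}. On the jump set $J^*$ of $m^*$, the Rankine–Hugoniot relation makes $m^+,m^-$ reflections across the normal $n=e^{i\alpha}$, so the density of $\mu_{\Sigma_{\alpha_1,\alpha_2}}|_{J^*}$ takes the form $K\cos(2(\phi-\alpha))$; domination by the density $K\cos(2\alpha)$ of $\mu_{\Sigma_{e_1,e_2}}$ for \emph{every} $\phi$ forces $|\cos(2\alpha)|=1$, i.e.\ $\alpha\in\tfrac{\pi}{2}\Z$, and then the density vanishes at $\phi=\pi/4$. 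Off $J^*$, Corollary \ref{C_disintegration} gives $(\sigma_{\min})_x=\epsilon(x)\tfrac12(\delta_{\bar s-\pi/2}+\delta_{\bar s+\pi/2})$, and by \eqref{E_kin_entropy} the density of $\mu_{\Sigma_{\alpha_1,\alpha_2}}$ with respect to $\nu_{\min}$ equals $2\epsilon(x)\cos(2(\bar s-\phi))$; the same argument, with $\phi$ arbitrary, forces $\bar s\in\tfrac{\pi}{2}\Z$, so that the density again vanishes at $\phi=\pi/4$. Combining the jump and non-jump contributions yields $\mu_{\Sigma_{\e_1,\e_2}}(m^*)=0$.
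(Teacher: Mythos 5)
Your argument is correct in substance, but it follows a genuinely different route from the paper. The paper's proof hinges on the algebraic identity \eqref{E_ADM_algebraic} of Ambrosio--De Lellis--Mantegazza, which expresses $\tilde F_0$ as the Pythagorean combination of the total masses of the two entropy defect measures; with that identity, the fixed boundary flux of $\Sigma_{e_1,e_2}$ and the known facts $\div \Sigma_{\e_1,\e_2}(\bar m)=0$, $\div \Sigma_{e_1,e_2}(\bar m)\ge 0$ give minimality of $\bar u^\delta$ in three lines, and the equality case immediately forces $\left|\div \Sigma_{\e_1,\e_2}(m)\right|(\Omega_\delta)=0$ and $\left|\div \Sigma_{e_1,e_2}(m)\right|(\Omega_\delta)=\div \Sigma_{e_1,e_2}(m)(\Omega_\delta)$, i.e.\ both conclusions at once. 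You avoid \eqref{E_ADM_algebraic}: you use only the trivial bound $\tilde F_0\ge \left|\mu_{\Sigma_{e_1,e_2}}\right|(\Omega_\delta)$, the boundary-flux normalization (same as the paper's integration by parts, legitimate since $m=\bar m$ is smooth in $S_\delta$), an explicit ridge computation giving $\tilde F_0(\bar u^\delta,\Omega_\delta)=I^*$ (this silently uses the BV structure of $\nabla\bar u^\delta$ and concentration of its entropy production on the ridge, which is the same kind of known input about the ellipse that the paper cites from Jin--Kohn), and then, for a minimizer, the measure-level equality $\bigvee_\phi\left|\mu_{\Sigma_{\alpha_1,\alpha_2}}\right|=\mu_{\Sigma_{e_1,e_2}}$ upgraded pointwise via Theorem \ref{T_structure} on $J$ and Corollary \ref{C_disintegration} together with \eqref{E_kin_entropy} off $J$. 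That last step is sound (your density formulas $K\cos(2(\phi-\alpha))$ and $2\epsilon\cos(2(\bar s-\phi))$ check out, with the usual care about sign conventions and a countable-dense-in-$\phi$ argument to choose $\phi$ depending on $x$), but it is heavier than needed: once you have $\left|\mu_{\Sigma_{\alpha_1,\alpha_2}}\right|\le \mu_{\Sigma_{e_1,e_2}}$ for every frame, the linear relation $\mu_{\Sigma_{\alpha_1,\alpha_2}}=\cos(2\phi)\,\mu_{\Sigma_{e_1,e_2}}+\sin(2\phi)\,\mu_{\Sigma_{\e_1,\e_2}}$ gives, for the densities $f\ge 0$, $g$ with respect to any common dominating measure, $\sqrt{f^2+g^2}\le f$, hence $\mu_{\Sigma_{\e_1,\e_2}}=0$ with no kinetic or fine-structure input at all (this is essentially re-deriving the relevant consequence of \eqref{E_ADM_algebraic}). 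Two harmless remarks: the symmetry computation showing $\mu_{\Sigma_{\e_1,\e_2}}(\Omega_\delta)=0$ is correct but never used afterwards, and the claim $I^*>0$ fails for a circle, though nothing in your argument depends on it.
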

\begin{proof}
In \cite{ADLM_eikonal}, the authors noticed that for every $u \in A(\Omega_\delta)$ it holds
\begin{equation}\label{E_ADM_algebraic}
\tilde F_0(u,\Omega_\delta) = \left( \left( \left|\div \Sigma_{e_1,e_2}(\nabla^\perp u)\right| (\Omega_\delta)\right)^2 + \left(\left|\div \Sigma_{\e_1,\e_2}(\nabla^\perp u)\right| (\Omega_\delta)\right)^2  \right)^{\frac{1}{2}}.
\end{equation}
Let us denote by $\bar m := \nabla^\perp \bar u^\delta$. Since for every $u \in \Lambda_\delta(\Omega)$ it holds $\nabla^\perp u = \bar m$ in 
$S_\delta$, then it follows from \eqref{E_ADM_algebraic} that
\begin{equation}\label{E_chain_minimality}
\begin{split}
\tilde F_0(u,\Omega_\delta) = &~ \left(\left( \left| \div \Sigma_{\e_1,\e_2}(\nabla^\perp u)\right|(\Omega_\delta)\right)^2 +  \left( \left| \div \Sigma_{e_1,e_2}(\nabla^\perp u)\right|(\Omega_\delta)\right)^2\right)^{\frac{1}{2}} \\
\ge &~ \div \Sigma_{e_1,e_2}(\nabla^\perp u)(\Omega_\delta) \\
= &~ \int_{\partial \Omega_\delta} \Sigma_{e_1,e_2}(\nabla^\perp u) \cdot n d\mathcal H^1 \\
= &~ \div  \Sigma_{e_1,e_2}(\bar m)(\Omega_\delta) \\
= &~ \tilde F_0(\bar u^\delta,\Omega_\delta),
\end{split}
\end{equation}
where in the last equality we used $\div \Sigma_{\e_1,\e_2}(\bar m)= 0$ and $\div \Sigma_{e_1,e_2}(\bar m)\ge 0$.
This shows in particular that $\bar u^\delta$ is a minimizer of $\tilde F_0(\cdot,\Omega_\delta)$ in $\Lambda^0_\delta(\Omega)$.
Moreover for every minimizer  $u$ of $\tilde F_0(\cdot,\Omega_\delta)$ in $\Lambda^0_\delta(\Omega)$, the inequality in 
\eqref{E_chain_minimality} is an equality and this completes the proof.
\end{proof}

\begin{theorem}
Let $\Omega$ be an ellipse, and $m \in A_\delta(\Omega)$ be such that
\begin{equation}\label{E_sign_minimality}
\div \Sigma_{\e_1,\e_2}(m)= 0, \qquad \mbox{and} \qquad \div \Sigma_{e_1,e_2}(m)\ge 0.
\end{equation}
Then
\begin{equation}\label{E_viscous}
m\llcorner \Omega = \nabla^\perp \dist(\cdot, \partial \Omega).
\end{equation}
\end{theorem}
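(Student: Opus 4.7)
The strategy is to combine the structural analysis of Section~\ref{S_structure} with the geometry of the ellipse to identify the jump set $J$ of $m$ and reconstruct $m$ from its Lagrangian representation.

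First I would apply Proposition~\ref{P_vanishing} in $\Omega_\delta$: since $\div\Sigma_{\e_1,\e_2}(m)=0$, the jump set $J$ is contained in a countable union of horizontal and vertical segments and $\nu_{\min}$ is concentrated on $J$. In $S_\delta$ the field $m=-\nabla^\perp\dist(\cdot,\partial\Omega)$ is smooth, so $J\subset\bar\Omega$.

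Second, I would exploit the sign condition $\div\Sigma_{e_1,e_2}(m)\ge 0$ together with the jump identity of Theorem~\ref{T_structure}(3). Writing the traces as $m^\pm=e^{i(\bar s\pm\beta)}$ and computing $n\cdot(\Sigma_{e_1,e_2}(m^+)-\Sigma_{e_1,e_2}(m^-))$ explicitly expresses the sign of the jump contribution in terms of $\bar s$ and $\beta$. Combined with the horizontal/vertical constraint on $J$ from the first step, this severely restricts the admissible orientations of $m^\pm$ on each component of $J$.

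Third, I would take a minimal Lagrangian representation $\omega$ of $m$ on a neighborhood of $\bar\Omega$ contained in $\Omega_\delta$ (Proposition~\ref{P_Lagr}). By Lemma~\ref{L_jumps}, the $x$-component of each characteristic is a polygonal path whose vertices lie in $J$, so between jumps the motion is a straight line in a fixed direction $e^{i\gamma_s}$. The boundary data $m=-\nabla^\perp\dist(\cdot,\partial\Omega)$ in $S_\delta$ pin down the admissible characteristic directions at $\partial\Omega$. Tracking the characteristics inward through at most finitely many horizontal or vertical refractions and using the strict convexity of the ellipse, the combined constraints should leave only one admissible configuration: $J=[-c,c]\times\{0\}$, the cut locus of the ellipse, with $m$ equal to $\nabla^\perp\dist(\cdot,\partial\Omega)$ on $\Omega$.

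The main obstacle I anticipate is this last geometric/combinatorial step: rigorously excluding every alternative jump-set configuration (extra horizontal segments off the major axis, spurious vertical segments, or unions thereof). This requires a careful case analysis of straight characteristic rays undergoing successive refractions on horizontal or vertical components of $J$, showing, via the convexity of $\Omega$ and the orientation constraints of the second step, that any configuration other than the cut locus forces some characteristic to meet $\partial\Omega$ with a direction inconsistent with the prescribed boundary value $-n_\Omega^\perp$, contradicting the boundary data.
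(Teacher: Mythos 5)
Your first two steps coincide with the opening of the paper's argument: Proposition~\ref{P_vanishing} localizes $J$ on horizontal and vertical segments with $\nu_{\min}$ concentrated on $J$, and the sign condition $\div \Sigma_{e_1,e_2}(m)\ge 0$ together with the jump formula fixes the orientation of the traces ($m_1^+=-m_1^->0$ on horizontal jumps, $m_2^+=-m_2^->0$ on vertical ones). But from there your plan diverges, and the divergence is where the gap lies. The entire content of the theorem is to exclude jump segments other than the cut locus and to reconstruct $m$, and your proposal reduces this to a ``geometric/combinatorial case analysis of characteristic rays undergoing refractions'' which you acknowledge you have not carried out. That analysis is not a routine verification: at a jump time of $\gamma_s$ the characteristic can change direction by any amount compatible with the sets $E_\gamma^\pm$ in \eqref{E_def_Epmgamma}, characteristics are not unique, and nothing in your sketch explains how to propagate the trace-orientation constraint from a hypothetical horizontal segment at height $b>0$ all the way to $\partial\Omega$ so as to contradict the boundary datum. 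In addition, your appeal to Lemma~\ref{L_jumps} to claim that $\gamma_x$ is ``a polygonal path with vertices in $J$'' overstates what that lemma gives: it only locates the \emph{jump} part of $D_t\gamma_s$ on $J$, while the diffuse part $\tilde D_t\gamma_s$ is not controlled by it; controlling the diffuse part requires combining minimality, the concentration of $\nu_{\min}$ on $J$, and a Lemma~\ref{L_BV_Lusin}-type argument, and even then a characteristic may meet $J$ along nondegenerate time intervals, so the polygonal picture needs justification.

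For comparison, the paper does not track characteristics at all in this proof. It first converts the trace-orientation information into a distributional sign: using Corollary~\ref{C_disintegration} and the explicit profiles $\bar g_\beta$, one gets $\langle \partial_s\sigma_{\min},\phi\rangle\ge 0$ for every nonnegative $\phi$ supported in $\Omega_\delta\times\bigl(\bigl(0,\tfrac{\pi}{2}\bigr)\cup\bigl(\pi,\tfrac32\pi\bigr)\bigr)$. Then a putative horizontal jump set at height $b>0$ is excluded by testing the kinetic equation \eqref{E_kinetic} with $\psi_\e(x)\rho(s)$, where $\psi_\e$ approximates the characteristic function of a region $E$ bounded by the line $\{x_2=b\}$ and a chord of small slope chosen so that the only boundary contribution with $s\in\supp\rho$ comes from $S_\delta$, where $\chi$ vanishes for those directions; the surviving term along $\{x_2=b\}$ is strictly positive, contradicting the sign of $\partial_s\sigma_{\min}$. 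This shows $\nu_{\min}$ is concentrated on the two axes. Finally, in each open quadrant one has $e^{is}\cdot\nabla_x\chi=0$, so the sets $\{e^{is}\cdot m>0\}$ are invariant along the direction $e^{is}$, and sliding this invariance from the boundary layer $S_\delta$ (where $m=\bar m$) to any Lebesgue point pins down $m=\nabla^\perp\dist(\cdot,\partial\Omega)$ in $\Omega$. If you want to salvage your route, you would need to supply, with full rigor, the global obstruction argument replacing this duality step; as written, the proposal leaves the essential part of the proof open.
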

\begin{proof}
The proof is divided into three steps: in Step 1 we link the assumptions in \eqref{E_sign_minimality} with the sign of $\partial_s \sigma_{\min}$
relying on Corollary \ref{C_disintegration} and Proposition \ref{P_vanishing}. Then we will prove in Step 2 that the entropy defect measures of
every $m$ as in the statement are concentrated on the axis of the ellipse. We finally prove in Step 3 that this last condition forces $m$ to satisfy \eqref{E_viscous}.

\emph{Step 1}. Let $m \in A_\delta(\Omega)$ be as in the statement and $\sigma_{\min}$ be its minimal kinetic measure. 
Then for every $\phi\in C^1_c(\Omega_\delta \times \R/2\pi\Z)$ such that
$\phi \ge 0$ and
\begin{equation*}
\supp \phi \subset \Omega_\delta \times \left( \left(0,\frac{\pi}{2}\right) \cup \left(\pi, \frac{3}{2}\pi\right)\right)
\end{equation*}
it holds
\begin{equation*}
\langle \partial_s\sigma_{\min}, \phi\rangle = - \int_{\Omega\times \R/2\pi\Z} \partial_s\phi d\sigma_{\min} \ge 0.
\end{equation*}
\emph{Proof of Step 1}. Since $\div \Sigma_{\e_1,\e_2}(m)= 0$, it follows from Proposition \ref{P_vanishing} 
that for $\nu_{\min}$-a.e. $x \in J$ the normal to $J$ at $x$ is
$n(x)=e^{is(x)}$ for some $s(x) \in \frac{\pi}{2}\Z$. Up to exchange $m^+$ and $m^-$, we can therefore assume without loss of generality that $n (x) = (1,0)$ or $n(x)=(0,1)$ for $\nu_{\min}$-a.e. $x \in J$. We denote by $J_h\subset J$ the points for which $n(x)=(0,1)$ and $J_v \subset J$ the points with $n(x)=(1,0)$. We consider these two cases separately:

\noindent if $n(x)=(0,1)$ then 
\begin{equation*}
\left(\div \Sigma_{e_1,e_2}(m)\right)\llcorner J_h= \frac{1}{3} \left((m^+_1)^3(x) - (m^-_1)^3(x) \right) \H^1\llcorner J_h,
\end{equation*}
therefore $m^+_1(x)=-m^-_1(x)>0$ for $\nu$-a.e. $x \in J_h$. In particular, using the same notation as in Corollary \ref{C_disintegration},
we have
$\bar s = \pi$. We observe that by the definition of $\bar g_\beta$ in \eqref{E_def_bargbeta}, for every $\beta \in (0,\pi)$ it holds 
$\partial_s \bar g_\beta(s) \ge 0$ for $\mathcal L^1$- a.e. $s \in \left(0,\frac{\pi}{2}\right) \cup \left(\pi, \frac{3}{2}\pi\right)$ and  
$\partial_s \bar g_\beta(s) \le 0$ for $\mathcal L^1$- a.e. $s \in \left(\frac{\pi}{2},\pi\right) \cup \left(\frac{3}{2}\pi,2\pi\right)$.
In particular for every $\beta \in (0,\pi)$ and $\mathcal L^1$- a.e. $s \in \left(0,\frac{\pi}{2}\right) \cup \left(\pi, \frac{3}{2}\pi\right)$ it holds
\begin{equation*}
\partial_s \bar g_{\beta}(s-\bar s) \ge 0.
\end{equation*}

\noindent Similarly if $n=(1,0)$ then 
\begin{equation*}
\left(\div \Sigma_{e_1,e_2}(m)\right)\llcorner J_v= \frac{1}{3} \left((m^+_2)^3(x) - (m^-_2)^3(x) \right) \H^1\llcorner J_v,
\end{equation*}
therefore $m^+_2(x)=-m^-_2(x)>0$ for $\nu_{\min}$-a.e. $x \in J_v$. In particular $\bar s = 0$ so that
for every $\beta \in (0,\pi)$ and $\mathcal L^1$- a.e. $s \in \left(0,\frac{\pi}{2}\right) \cup \left(\pi, \frac{3}{2}\pi\right)$ it holds
\begin{equation*}
\partial_s \bar g_{\beta}(s-\bar s) \ge 0.
\end{equation*}
Therefore by Corollary \ref{C_disintegration}, it follows 
\begin{equation*}
\langle \partial_s\sigma_{\min}, \phi\rangle = \int_\Omega \int_0^{2\pi}\phi \bar g_\beta'(s-\bar s) ds d\nu_{\min} \ge 0.
\end{equation*}
\newline \emph{Step 2}. We prove that $\nu_{\min}$ is concentrated on the axis of the ellipse.

Let us denote by
\begin{equation*}
\Omega = \left\{ x \in \R^2 : x_1^2 + a x_2^2 < r^2\right\}
\end{equation*}
with $r>0$ and $a \ge 1$.
Let us assume by contradiction that $\nu_{\min} ( J_h \cap \{x \in \R^2 : x_2>0\})>0$. Then there exists $b>0$ such that $\nu_{\min} (\{x \in J_h : x_2=b\})>0$.
By the analysis in the proof of Step 1 there exists $A \subset \R$ such that $\L^1(A)>0$ and  for $\H^1$-a.e. $x \in A \times \{b\}$ it holds $m^-_1(x)<0$. In particular we can choose $\alpha \in (\pi, 3\pi/2)$ such that 
\begin{equation}\label{E_choice_alpha}
|\tan \alpha| \le  \frac{b}{2(r+\delta)} \qquad \mbox{and} \qquad \eta:= \H^1 \left( \left\{ x \in \Omega \cap J_h : x_2=b \mbox{ and }e^{i\alpha}\cdot m^-(x)>0\right\} \right) >0.
\end{equation}
Let $\bar x_1>0$ be such that $(\bar x_1,b)\in \partial \Omega_\delta$ and denote by
\begin{equation}\label{E_def_E}
E:=\{x \in \Omega_\delta: x_2 \in (g(x_1),b)\},
\end{equation}
where $g(x_1)= \tan (\alpha) (x_1-\bar x_1) + b $. The first constraint in \eqref{E_choice_alpha} implies that $E\subset \{ x_2>0\}$ (see Figure \ref{F_ellipse1}).

\begin{figure}
\centering
\def\svgwidth{0.6\columnwidth}
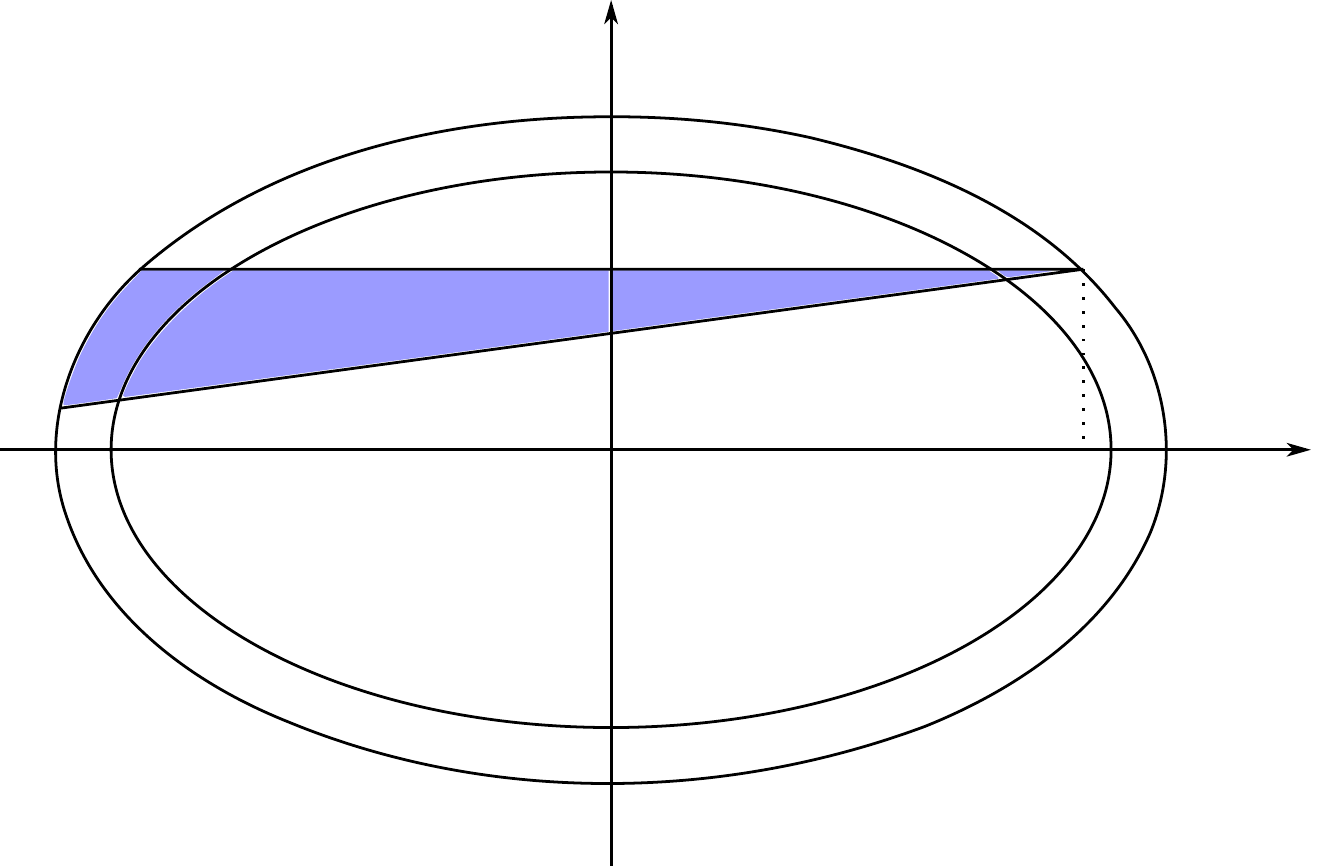
\caption{The figure illustrates the definition of $E$ in \eqref{E_def_E}.}\label{F_ellipse1}
\end{figure}

We consider the following Lipschitz approximation of the characteristic function of $E$: 
\begin{equation*}
\psi_\e ( x ) =
\begin{cases}
0 & \mbox{if }x \notin E \\
\min \left\{ 1, \frac{1}{\e}\dist(x,\partial E) \right\} & \mbox{if }x \in E.
\end{cases}
\end{equation*}
We moreover consider $\rho \in C^\infty_c(\pi + \frac{\alpha-\pi}{2},\alpha)$ such that $\rho\ge 0$ and $\int_\R \rho(s) ds =1$ and we test \eqref{E_kinetic} with $\varphi_\e (s,x) = \psi_\e(x) \rho(s)$.
If $\e < \delta$, then the choice of $\alpha$ in \eqref{E_choice_alpha} and of $\rho$ implies that
\begin{equation*}
\{(x,s) \in \Omega_\delta \times \supp(\rho) :e^{is}\cdot \nabla_x \psi_\e < 0 \} \subset \{(x,s) \in (\Omega_\delta \setminus \Omega) \times  \supp(\rho) : x_2>0 \mbox{ and }x_1<0\}.
\end{equation*}
Since $m= \bar m$ on $\Omega_\delta \setminus \Omega$, then for $\L^2 \times \L^1$-a.e. $(x,s) \in (\Omega_\delta \setminus \Omega) \times  \supp(\rho) $ it holds $\chi(x,s)=\1_{e^{is}\cdot m(x)>0}=0$. In particular, by the second condition in \eqref{E_choice_alpha} we have
\begin{equation*}\label{E_sign+}
\begin{split}
\liminf_{\e\to 0}\int_{\Omega \times \R/2\pi\Z} e^{is}\cdot \nabla_x \psi_\e(x) \rho(s) \chi(x,s) ds dx \ge &~ \int_{\{x \in \Omega:x_2=b\}\times \R/2\pi\Z} (-\sin s) \rho (s) 
\1_{e^{is} \cdot m^-(x)>0} (x) ds d\mathcal H^1(x) \\
\ge &~ \eta \sin\left(\frac{\alpha-\pi}{2}\right) \\
>&~ 0.
\end{split}
\end{equation*}
This contradicts Step 1, which implies that
\begin{equation*}
\int_{\Omega \times \R/2\pi\Z} e^{is}\cdot \nabla_x \psi_\e(x) \rho(s) \chi(x,s)ds dx = - \langle \partial_s \sigma_{\min}, \rho \otimes \psi_\e \rangle \le 0.
\end{equation*}
A similar argument excludes that $\nu_{\min}(\{x\in J_h: x_2=b\})>0$ if $b<0$ and that $\nu_{\min}(\{x\in J_v: x_1=a\})>0$ if $a\ne 0$.
See Figure \ref{F_3ellipses} which illustrates the sets $E$ that need to be considered in these cases.
\begin{figure}
\centering
\def\svgwidth{\columnwidth}
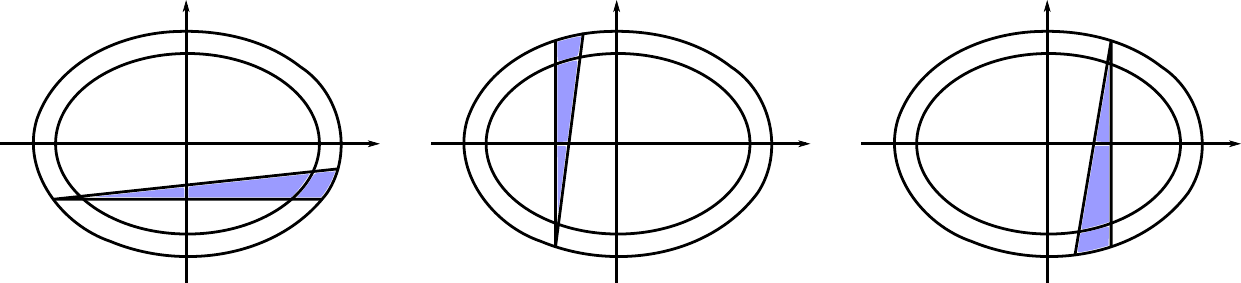
\caption{The regions in blue indicate the sets $E$ to be considered in order to repeat the presented argument in the three cases not addressed in details.}\label{F_3ellipses}
\end{figure}
\newline \emph{Step 3}. We prove that the unique $m\in A_\delta(\Omega)$ for which $\nu_{\min}$ is concentrated on the axis of the ellipse satisfies \eqref{E_viscous}. In particular we show that $m = \bar m$ on 
\begin{equation*}
\tilde \Omega_\delta = \{ x \in \Omega_\delta : x_1<0, x_2>0\},
\end{equation*}
being the argument for the other quadrants analogous. 

Let $\bar x \in \Omega$ be a Lebesgue point of $m$ and let $\bar s(\bar x) \in (\pi/2,\pi)$ be such that
\begin{equation*}
e^{i\bar s(\bar x)}= - \nabla \dist (\bar x, \partial \Omega).
\end{equation*}
For every $s \in (\pi/2,\pi)$ let $t_s>0$ be the unique value such that 
\begin{equation*}
y_s:= \bar x + t_s e^{is} \in \partial \Omega_{\delta/2}\cap \tilde \Omega_\delta.
\end{equation*}
\begin{figure}
\centering
\def\svgwidth{0.8\columnwidth}
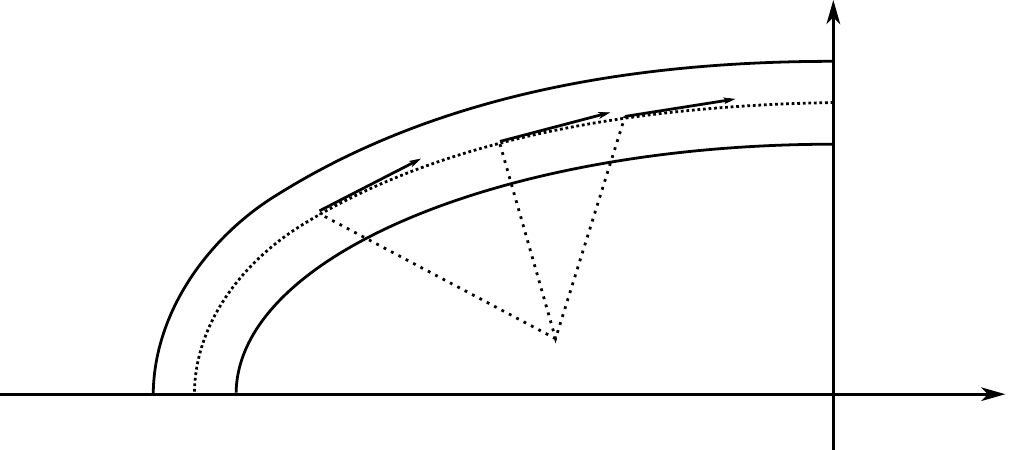
\caption{The picture represents the points $y_{s_1},y_{\bar s(\bar x)}, y_{s_2}$, while the arrows represent the values of $\bar m$ at these points.}\label{F_ellipse2}
\end{figure}
By elementary geometric considerations (see Figure \ref{F_ellipse2}) the following properties hold:
\begin{enumerate}
\item $\bar m (y_s)\cdot e^{is} >0$ for every $s \in (\pi/2,\bar s (\bar x))$;
\item $\bar m (y_s)\cdot e^{is} <0$ for every $s \in (\bar s (\bar x),\pi)$.
\end{enumerate}
In particular for every $\e \in \left(0,  \frac{1}{2}\min \{\bar s(\bar x) -\pi/2, \pi - \bar s(\bar x)\}\right)$ there exists $r \in (0,\frac{\delta}{2})$ such that
\begin{enumerate}
\item for every $s \in (\bar s(\bar x) - 2 \e, \bar s (\bar x) - \e)$ and every $y \in B_r(y_s)$ it holds $\bar m (y) \cdot e^{is}>0$;
\item for every $s \in (\bar s(\bar x) + \e, \bar s (\bar x) +2 \e)$ and every $y \in B_r(y_s)$ it holds $\bar m (y) \cdot e^{is}<0$.
\end{enumerate}
By Step 2 we have that
\begin{equation*}
e^{is}\cdot \nabla_x \chi = 0 \qquad \mbox{in }\D'(\tilde \Omega_\delta)
\end{equation*}
therefore for $\mathcal L^1$-a.e. $s \in \R/2\pi\Z$ the sets $\left\{x \in \tilde \Omega_\delta : e^{is}\cdot m(x)>0\right\}$ and 
$\left\{x \in \tilde \Omega_\delta : e^{is}\cdot m(x)<0\right\}$ are invariant
by translations in the direction $e^{is}$ up to negligible sets. Since $m = \bar m$ in $\tilde \Omega_\delta \setminus \Omega$, then
it follows by the previous analysis that for every $\e>0$ there exists $r>0$ such that for $\mathcal L^2$-a.e. $x \in B_r(\bar x)$
the following two inequalities hold:
\begin{equation}\label{E_two_conditions}
\begin{split}
m(x)\cdot e^{is}>0 &\qquad \mbox{for }\mathcal L^1\mbox{-.a.e. }s \in  (\bar s(\bar x) - 2 \e, \bar s (\bar x) - \e), \\
m(x)\cdot e^{is}<0 &\qquad \mbox{for }\mathcal L^1\mbox{-.a.e. }s \in  (\bar s(\bar x) +\e , \bar s (\bar x) +2 \e).
\end{split}
\end{equation}
The two conditions in \eqref{E_two_conditions} implies that for $\mathcal L^2$-a.e. $x \in B_r(\bar x)$ it holds
$m(x)=e^{is(x)}$ for some $s(x) \in [\bar s(\bar x)- \pi/2 -\e,  \bar s(\bar x)- \pi/2 +\e]$.
Since $\bar x$ is a Lebesgue point of $m$, letting $\e \to 0$ we obtain 
\begin{equation*}
m(\bar x)= \bar s(\bar x) - \frac{\pi}{2} = \bar m(\bar x).
\end{equation*}
This concludes the proof.
\end{proof}

\bibliographystyle{alpha}

\end{document}